\documentclass{amsart}

\usepackage{amsmath,amssymb}
\usepackage{amsthm}
\usepackage{amsrefs}
\usepackage{indentfirst}
\usepackage{mathrsfs}
\usepackage{hyperref}
\usepackage{xcolor}
\usepackage[margin=1in]{geometry}
\usepackage{nicefrac}
\usepackage{enumerate}
\usepackage{graphicx}
\usepackage{subcaption}
\usepackage{booktabs}
\usepackage{multirow}
\usepackage{graphicx}

\newlength{\leftstackrelawd}
\newlength{\leftstackrelbwd}
\def\leftstackrel#1#2{\settowidth{\leftstackrelawd}%
	{${{}^{#1}}$}\settowidth{\leftstackrelbwd}{$#2$}%
	\addtolength{\leftstackrelawd}{-\leftstackrelbwd}%
	\leavevmode\ifthenelse{\lengthtest{\leftstackrelawd>0pt}}%
	{\kern-.5\leftstackrelawd}{}\mathrel{\mathop{#2}\limits^{#1}}}

\numberwithin{equation}{section}
\newtheorem{Theorem}{Theorem}[section]
\newtheorem{lemma}[Theorem]{Lemma}
\newtheorem{remark}[Theorem]{Remark}

\newtheorem{assumption}[Theorem]{Assumption}
\newtheorem{defi}[Theorem]{Definition}
\newtheorem{cor}[Theorem]{Corollary}
\allowdisplaybreaks[4]

\newcommand{\R}{\mathbb{R}}

\newcommand{\N}{\mathbb{N}}
\newcommand{\argmin}{\mathop{\arg\min}}

\newcommand{\region}{\mathcal{D}}
\newcommand{\diff}{\mathop{}\!\mathrm{d}}
\newcommand{\mM}{\mathcal{M}}
\newcommand{\mT}{\mathcal{T}}
\newcommand{\mF}{\mathcal{F}}
\newcommand{\mE}{\mathcal{E}}
\providecommand{\dual}[1]{\left\langle#1\right\rangle}
\providecommand{\norm}[1]{\left\lVert#1\right\rVert}
\providecommand{\abs}[1]{\left\lvert#1\right\rvert}

\title[Finite Element Approximation for Gross--Pitaevskii Energy Functional]{Nonconforming Finite Element Approximation and Energy Lower Bound Estimation for the Gross--Pitaevskii Energy Functional}
\author{Chen Zhang}
\address{(CZ) School of Mathematical Sciences, Fudan University, Shanghai, 200433, People's Republic of China.}
\email{chenzhang24@m.fudan.edu.cn}
\author{Heyan Zhu}
\address{(HZ) School of Mathematical Sciences, Fudan University, Shanghai, 200433, People's Republic of China.}
\email{hyzhu25@m.fudan.edu.cn}
\author{Wenbin Chen}
\address{(WC) School of Mathematical Sciences and Shanghai Key Laboratory for Contemporary Applied Mathematics, Fudan University, Shanghai, 200433, People's Republic of China.}
\email{wbchen@fudan.edu.cn}

\begin{document}

    \begin{abstract}

	    The ground state of Bose--Einstein condensates can be described as the minimizer of the Gross--Pitaevskii energy functional subject to a mass conservation constraint. In this paper, we study the corresponding discrete optimization problem in nonconforming finite element spaces and establish a priori error estimates for the discrete ground state energy, the discrete eigenvalue, and the discrete ground state. Specifically, we derive explicit convergence rates for the a priori error in the particular case of the $EQ_1^{\mathrm{rot}}$ finite element. Furthermore, we proof that within the $EQ_1^{\mathrm{rot}}$ finite element framework, the discrete ground state energy provides a lower bound estimation to the exact energy. Finally, numerical experiments are presented to validate the theoretical analysis. 
    
    \end{abstract}

    \keywords{Gross--Pitaevskii energy functional, Bose--Einstein condensate, nonconforming finite element.}
        
    \maketitle

    \section{Introduction}

        At ultra-low temperatures, dilute bosonic gases can transition into a unique state of matter known as a Bose-Einstein condensate (BEC)~\cites{alma991054761079706011,bose1924plancks,einstein2005quantentheorie}. Within this context, the ground state—defined as the state of lowest energy—represents a central problem of study and has garnered significant attention. Mathematically, the ground state is characterized as a minimizer of the Gross--Pitaevskii energy functional over the $L^2$ sphere, where the energy functional takes the form as following~\cites{bao2012mathematical,bao2014mathematical}: for any $u \in \mathcal{M}$, 
        \begin{equation*}
            E(v) := \frac{1}{2}\int_{\region} \left(\abs{\nabla v}^2 + V\abs{v}^2 + \frac{\beta}{2}\abs{v}^4\right) \diff r. 
        \end{equation*}
        The $L^2$ sphere $\mM$ is defined as
        \begin{equation*}
            \mM := \{v\in H_0^1(\region): \norm{v}_{L^2(\region)}=1\},
        \end{equation*}
        where the integration domain $\mathcal{D}\subseteq \mathbb{R}^d$ ($d=2,3$) is a bounded convex region, the domain of the functional is $H_0^1(\mathcal{D})$, the function $V\in L^{\infty}(\region;\R_{\ge 0})$ represents the trapping potential, and the constant $\beta\in \R_{\ge 0}$ describes interparticle repulsion. The $L^2$ spherical condition can be regarded as a constraint for the mass of the condensates. For fundamental theories and mathematical properties of the ground state, we refer to~\cites{bao2005ground,bao2012mathematical,bao2014mathematical}.

        We denote the ground state function $u\in \mM$ be a minimizer of the Gross--Pitaevskii energy functional. By considering the constrained Euler--Lagrange equations, the optimization problem for the BEC ground state can be reformulated as the Gross--Pitaevskii eigenvalue problem:
        \begin{equation*}
            -\Delta u + Vu + \beta \abs{u}^2 u = \lambda u. 
        \end{equation*}
        There are many papers studying the numerical algorithms of the Gross--Pitaevskii equation, including approaches such as self-consistent field iterations~\cite{dion2007ground}, generalized inverse iterations~\cites{jarlebring2014inverse,altmann2021j}, $L^2$ gradient flows~\cite{bao2004computing}, Sobolev gradient flows~\cites{danaila2010new,kazemi2010minimizing,chen2024convergence,henning2020sobolev,zhang2025convergence,chen2023second}, Newton-type methods~\cites{xu2021multigrid,wu2017regularized}, and other iterative schemes~\cites{antoine2017efficient,bao2010efficient,bao2008generalized}. Beyond iterative solvers, it is essential to consider appropriate spatial discretization methods, which constitute the primary focus of this work. Typical discretization techniques include spectral and pseudo-spectral methods~\cites{bao2012mathematical,bao2014mathematical,cances2010numerical}, spectral element methods~\cite{chen2024fully}, Lagrange finite elements~\cites{cances2010numerical,henning2025discrete}, adaptive finite elements~\cites{chen2011adaptive,danaila2010finite,heid2021gradient}, multigrid methods~\cites{zhang2019efficient,xie2016multigrid,cances2018two}, discontinuous Galerkin composite finite elements~\cite{engstrom2022higher}, and hybrid high-order methods~\cite{hauck2025hybrid}.

        The existing theoretical analysis for these spatial discretizations has focused mainly on conforming finite element methods, which require the finite element space $V_h$ to be a subspace of $H_0^1(\region)$. Zhou~\cites{zhou2003analysis,zhou2007finite} provided approximation estimates for arbitrary finite-dimensional subspaces of $H_0^1(\region)$ along with $L^2$ a priori error estimates. Canc\`es et al.~\cite{cances2010numerical} presented a complete error analysis for the same problem, including a priori estimates for the $L^2$-norm, $H^1$-norm, energy, and ground state eigenvalue. These results were later extended by Chen et al.~\cite{chen2011finite} by considering the Hartree-type potentials. Furthermore, error estimates for the Kohn--Sham potential were obtained in~\cites{chen2014adaptive,chen2013numerical} by Chen et al., and finite element estimates for the Hartree--Fock model were derived by Maday and Turinici \cite{maday2003error}. Henning et al.~\cites{henning2014two,henning2023optimal} investigated generalized finite element approximations for the Gross--Pitaevskii equation. Recently, Henning and Yadav~\cite{henning2025discrete} studied the approximations of rotating Bose--Einstein condensates in $\mathbb{P}^k$-Lagrange finite element spaces.

        A characteristic of conforming finite element methods is that the discrete ground state energy approximates the exact value from above, as the minimization is performed over a subspace of $H_0^1(\region)$. In contrast, a significant property of some nonconforming finite element methods is their ability to provide lower bound estimates for eigenvalues in linear elliptic eigenvalue problems~\cites{lin2007finite,yang2000posteriori,yang2010eigenvalue,zhimin2007eigenvalue,hu2014lower,liang2022weak}. Recently, Hauck and Liang~\cite{hauck2025hybrid} extended this conclusion, demonstrating that hybrid high-order methods can yield lower bound approximations for the Gross--Pitaevskii energy minimum. However, a corresponding theoretical analysis for standard nonconforming finite elements is still lacking. In this paper, following the methodology of Canc\`es et al.~\cite{cances2010numerical}, we establish the approximation properties of BEC ground states in general nonconforming finite element spaces. This includes a priori error estimates for the energy, eigenvalue, and the $L^2$ and $H^1$ errors of the ground state. We then focus on the $EQ_1^{\mathrm{rot}}$ finite element, deriving explicit convergence rates within the genral approximation properties. Furthermore, drawing on the approaches of Hu et al.~\cite{hu2014lower} and Li~\cite{li2008lower}, we demonstrate that the $EQ_1^{\mathrm{rot}}$ element provides a lower bound approximation to the Gross--Pitaevskii energy minimum. These theoretical results are subsequently validated by numerical experiments.

        The remainder of this paper is organized as follows. In Section~\ref{sec:fem}, we introduce the nonconforming finite element spaces and state the assumptions required for our analysis, alongside two lemmas concerning the ground state. Section~\ref{sec:asymptotics} is devoted to the asymptotic convergence of the discrete ground state $u_h$, the discrete energy $E(u_h)$, and the eigenvalue $\lambda_h$, as well as error estimates for the latter two quantities. In Section~\ref{sec:prior}, we derive a priori $H^1$ and $L^2$ error estimates for the discrete ground state $u_h$ relative to the exact ground state $u$. In Section~\ref{sec:lower}, we consider the specific case of the $EQ_1^{\mathrm{rot}}$ element space, providing concrete error estimates and proving its lower bound property for the energy minimum. Finally, numerical experiments are presented in Section~\ref{sec:numerical} to support our theoretical results. Several lemmas related to nonconforming finite element spaces are provided in Appendix~\ref{sec:prepare}, and the verification that the $EQ_1^{\mathrm{rot}}$ space satisfies our assumptions is given in Appendix~\ref{sec:eq1rot}.

    \section{Nonconforming finite element spaces}\label{sec:fem}

        Consider the constrained optimization problem for the Gross--Pitaevskii energy functional:
        \begin{equation}\label{eq:gp}
            E(u) = \inf_{v\in \mM} E(v) = \inf_{v\in \mM} \frac{1}{2}\int_{\region} \left( \abs{\nabla v}^2 + V\abs{v}^2 + \frac{\beta}{2}\abs{v}^4 \right) \diff r, 
        \end{equation}
        where the computational domain $\region\subseteq \R^d$ ($d=2,3$) is a bounded convex region. The constraint manifold $\mM=\{v\in H_0^1(\region):\norm{u}_{L^2(\region)}=1\}$ represents the mass conservation condition, $V\in L^{\infty}(\region;\R_{\ge 0})$ is the trapping potential, and the constant $\beta>0$ describes the repulsive interactions between particles. We denote by $\langle\cdot,\cdot\rangle$ the dual pairing between $H_0^1(\mathcal{D})$ and its dual space $H^{-1}(\mathcal{D})$. The linear operator $A_v: H_0^1(\region) \to H^{-1}(\region)$ is defined as 
        \begin{equation}\label{eq:A}
            A_v w := -\Delta w + Vw + \beta \abs{v}^2 w,
        \end{equation}
        and let $E'(v) \in H^{-1}(\region)$ denote the Fréchet derivative of the functional $E(v)$. Following the standard approach for constrained optimization problems (cf. \cite{chen2024convergence}), we introduce a Lagrange multiplier $\lambda$ and rewrite the constraint as $\frac{1}{2}(\norm{u}_{L^2(\region)}^2-1)=0$, which yields the Euler--Lagrange equations:
        \begin{equation*}
            (\nabla u, \nabla v)_{L^2(\region)} + (Vu + \abs{u}^2 u, v)_{L^2(\region)} = \langle A_u u,v \rangle = \langle E'(u),v \rangle = \lambda (u,v)_{L^2(\region)}\qquad \forall v\in H_0^1(\region), 
        \end{equation*}
        or equivalently, 
        \begin{equation}\label{eq:gpe}
            -\Delta u + Vu + \beta \abs{u}^2 u = A_u u = E'(u) = \lambda u. 
        \end{equation}
        In equations \eqref{eq:A} and \eqref{eq:gpe}, the operator $\Delta$ is understood in the distributional sense, i.e., 
        \begin{equation*}
            \dual{-\Delta v,w} := \int_{\region} \nabla v \cdot \nabla w \diff r, \qquad \forall v,w\in H_0^1(\region). 
        \end{equation*}
        We refer to \eqref{eq:gpe} as the Gross--Pitaevskii eigenvalue problem, where $\lambda$ is the eigenvalue associated with the eigenfunction $u$.

        The minimizers of the Gross--Pitaevskii energy functional are termed the ground states of the Bose--Einstein condensate. Canc\`es et al. established that a solution to the constrained optimization problem~\eqref{eq:gp} exists and is unique up to a sign~\cite{cances2010numerical}*{Lemma 2}. Let $u:=\argmin\limits_{v\in \mM} E(v)$ denote a ground state and $\lambda$ the corresponding ground state eigenvalue. Based on standard elliptic regularity theory, Henning and Yadav proved the following result~\cite{henning2025discrete}*{Lemma 2.5}.

        \begin{lemma}\label{lem:reg}
            The ground state $u$ possesses $H^2$-regularity, i.e., $u\in C^0(\overline{\region})\cap H^2(\region)$. 
        \end{lemma}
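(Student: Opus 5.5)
The plan is a bootstrap: first establish integrability of $u$, then move the nonlinear and potential terms to the right-hand side of \eqref{eq:gpe}, and finally invoke $H^2$-regularity of the Dirichlet Laplacian on the convex domain $\region$.

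\textbf{Step 1: regularity of the right-hand side.} Since $u\in H_0^1(\region)$ and $d\le 3$, the Sobolev embedding gives $H_0^1(\region)\hookrightarrow L^6(\region)$, so $u\in L^6(\region)$. Hence $\abs{u}^2u\in L^2(\region)$, with
\begin{equation*}
\norm{\abs{u}^2u}_{L^2(\region)}=\norm{u}_{L^6(\region)}^3\le C\norm{u}_{H^1(\region)}^3 .
\end{equation*}
Moreover $V\in L^\infty(\region)$ gives $Vu\in L^2(\region)$. Rewriting \eqref{eq:gpe} in weak form as
\begin{equation*}
(\nabla u,\nabla v)_{L^2(\region)}=(f,v)_{L^2(\region)}\quad \forall v\in H_0^1(\region),
\end{equation*}
with $f:=\lambda u-Vu-\beta\abs{u}^2u\in L^2(\region)$, we see that $u$ is the weak solution of the Poisson problem $-\Delta u=f$ with homogeneous Dirichlet data.

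\textbf{Step 2: elliptic regularity.} On a bounded convex domain, the classical result of Grisvard (or Kadlec) states that the Dirichlet Laplacian maps $H^2(\region)\cap H_0^1(\region)$ onto $L^2(\region)$, with
\begin{equation*}
\norm{u}_{H^2(\region)}\le C\norm{f}_{L^2(\region)} .
\end{equation*}
Applying this gives $u\in H^2(\region)$.

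\textbf{Step 3: continuity.} For $d\le 3$, a bounded convex domain is Lipschitz and therefore satisfies the cone condition. The Sobolev embedding $H^2(\region)\hookrightarrow C^{0,\alpha}(\overline{\region})$ then holds for some $\alpha>0$, since $2-d/2>0$. Thus $u\in C^0(\overline{\region})$.

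\textbf{Main obstacle.} The step needing the most care is Step 2. There is no smoothness of $\partial\region$ available, so convexity is exactly what is required to avoid corner singularities, and one must cite the convex-domain $H^2$ theory rather than the smooth-boundary result. Step 1 relies only on $d\le 3$ to place the cubic term in $L^2$, so no iteration of the bootstrap is needed. Alternatively, one can simply appeal directly to \cite{henning2025discrete}*{Lemma 2.5} as the paper indicates.
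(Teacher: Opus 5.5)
Your proof is correct and is essentially the argument the paper relies on; the paper gives no proof of its own and cites Henning--Yadav (Lemma 2.5) for ``standard elliptic regularity.'' That argument runs as yours does: $f=\lambda u - Vu - \beta\abs{u}^2u$ lies in $L^2(\region)$ because $H_0^1(\region)\hookrightarrow L^6(\region)$ for $d\le 3$, convexity of $\region$ gives $u\in H^2(\region)$ by Kadlec/Grisvard, and the Sobolev embedding gives continuity. One small imprecision is in Step 3: the cone condition alone only gives bounded continuous functions on the open set, and continuity up to $\partial\region$ comes from the Lipschitz property via an extension operator. You do note that $\region$ is Lipschitz, so the conclusion stands.
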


        For the optimization problem~\eqref{eq:gp}, we consider an approximation using nonconforming finite element spaces. Let $\region$ be a polygonal domain and $\mT_h$ a regular partition of $\region$. Let $V_h$ denote the corresponding nonconforming finite element space. For a given positive integer $k \in \mathbb{N}_+$, any $v_h \in V_h$ satisfies $v_h|_K \in H^1(K) \cap \mathbb{P}^k(K)$ for each $K \in \mT_h$, although $V_h \not\subseteq H_0^1(\region)$. We define the discrete constraint manifold $\mM_h$ as 
        \begin{equation*}
            \mM_h = \{v_h\in V_h:\norm{v_h}_{L^2(\region)}=1\},
        \end{equation*}
        and the piecewise gradient operator $\nabla_h$ such that
        \begin{equation*}
            (\nabla_h v_h)|_K = \nabla (v_h|_K). 
        \end{equation*}
        Furthermore, the energy $E$ and the operator $A_v$ are extended to $V_h$ as follows:
        \begin{align*}
            &E(v_h) = \frac{1}{2}\int_{\region} \left( \abs{\nabla_h  v_h}^2 + V\abs{v_h}^2 + \frac{\beta}{2}\abs{v_h}^4 \right) \diff r, \\
            &\dual{A_v v_h, w_h} = \int_{\region} \left( \nabla_h v_h\cdot \nabla_h w_h + Vv_h w_h + \beta|v|^2 v_h w_h \right) \diff r. 
        \end{align*}
        These definitions are well-posed since $\nabla v_h = \nabla_h v_h$ whenever $v_h \in V_h \cap H_0^1(\region)$. We consider the discrete constrained optimization problem
        \begin{equation}\label{eq:gph}
            E(u_h) = \inf_{v_h\in \mM_h} E(v_h), 
        \end{equation}
        By Lagrange multipliers, this problem is equivalent to the discrete eigenvalue problem:
        \begin{equation}\label{eq:gpeh}
            \dual{A_{u_h} u_h, v_h} = \lambda_h (u_h, v_h)_{L^2(\region)}, \qquad \forall v_h\in V_h. 
        \end{equation}
        Since $\mM_h$ is a compact manifold in a finite-dimensional space, the energy functional $E$ attains its minimum on $\mM_h$. We denote a solution to \eqref{eq:gph} by $u_h$ and the corresponding eigenvalue in \eqref{eq:gpeh} by $\lambda_h$. These are referred to as the discrete ground state and the discrete ground state eigenvalue, respectively. For any $v_h, w_h \in V_h + H_0^1(\region)$, we define the broken $H^1$-inner product, norm, and semi-norm on the space $V_h + H_0^1(\region)$ as
        \begin{align*}
            &(v_h, w_h)_{H^1_h(\region)} = (v_h, w_h)_{L^2(\region)} + (\nabla_h v_h, \nabla_h w_h)_{L^2(\region)}, \\
            &\abs{v_h}_{H^1_h(\region)} = \norm{\nabla_h v_h}_{L^2(\region)}, \qquad \norm{v_h}_{H^1_h(\region)} = (v_h, v_h)_{H^1_h(\region)}^{\frac{1}{2}}. 
        \end{align*}
        As the space $V_h \not\subseteq H_0^1(\region)$, integration by parts on $V_h$ gives rise to a consistency error~\cite{ciarlet2002finite}. We define the consistency error functional $\mE_h: (V_h + H_0^1(\region))\times H^1(\region; \R^d) \to \R$ as 
        \begin{equation*}
            \mE_h(v_h, v) := \int_{\region} \left( v_h(\nabla \cdot v) + v\cdot \nabla_h v_h \right) \diff r = \sum_{K\in \mT_h}\int_{\partial K}v_h (v\cdot n) \diff S. 
        \end{equation*} 
        where $n$ denotes the unit outward normal vector. Let $\mF_h$ be the set of all interfaces in the partition $\mT_h$, and let $[v_h]$ denote the jump of $v_h$ across these interfaces, defined by
        \begin{align*}
            &[v_h]|_{e} := v_h|_{K}, \qquad \forall e\subseteq \partial \region \cap \partial K, e \in \mF_h, \\
            &[v_h]|_{e} := (v_h|_{K_1} - v_h|_{K_2})|_{e}, \qquad \forall e\nsubseteq \partial \region, e = K_1\cap K_2 \in \mF_h. 
        \end{align*}
        Throughout this paper, the notation $a \lesssim b$ indicates that $a \le Cb$ for a constant $C > 0$ that is independent of the mesh size $h$, but may depend on the domain $\region$, the potential $V$, the constant $\beta$, the ground state $u$, the ground state eigenvalue $\lambda$, the mesh regularity of $\mT_h$, and the polynomial degree $k$. To facilitate our analysis, we assume that the nonconforming finite element space $V_h$ satisfies the following conditions~\cite{shi2010finite}. Note that condition~\eqref{eq:assumpt:3} in Assumption~\ref{assumpt:consist} is more restrictive than the original generalized patch test~\cite{stummel1979generalized}, which only requires $v_h \in V_h$.

        \begin{assumption}\label{assumpt:consist}
            We make the following assumptions on the finite element space $V_h$:
            \begin{enumerate}
                \item $V_h$ possesses the approximation property, i.e., for any $v\in H_0^1(\region)$, 
                    \begin{equation}\label{eq:assumpt:1}
                        \lim_{h\to 0} \inf_{v_h\in V_h} \norm{v-v_h}_{H^1_h(\region)} = 0. 
                    \end{equation}
                \item The pair $\{V_h, H_0^1(\region)\}$ passes the generalized patch test, i.e., for any $v\in H^1(\region; \R^d)$, 
                    \begin{equation}\label{eq:assumpt:3}
                        \lim_{h\to 0} \sup_{\substack{v_h\in V_h + H_0^1(\region) \\ \norm{v_h}_{H^1_h(\region)} \le 1}} \abs{\mE_h(v_h, v)} = 0. 
                    \end{equation}
                \item For any $v_h\in V_h + H_0^1(\region)$, 
                    \begin{equation}\label{eq:assumpt:4}
                        \left( \sum_{e\in \mF_h} h^{-1}\norm{[v_h]}_{L^2(e)}^2 \right)^{\frac{1}{2}} \lesssim \norm{v_h}_{H^1_h(\region)}. 
                    \end{equation}
            \end{enumerate}
        \end{assumption}

        Let the second order Fréchet derivative of the energy functional $E(v)$, denoted by $E''(v): (V_h + H_0^1(\region)) \to (V_h + H_0^1(\region))'$, be given by 
        \begin{equation*}
            E''(v)w = A_v w + 2\beta \abs{v}^2 w, 
        \end{equation*}
        Regarding the second order Fréchet derivative $E''(u)$ evaluated at the ground state $u$, we have the following lemma.

        \begin{lemma}\label{lem:coerc}
            For a sufficiently small mesh size $h$, the operator $E''(u) - \lambda$ is bounded and coercive on $V_h + H_0^1(\region)$. Specifically, there exist positive constants $\mu, M>0$ such that 
            \begin{align*}
                &\dual{\left(E''(u)-\lambda\right)v_h, w_h} \le M\norm{v_h}_{H^1_h(\region)}\norm{w_h}_{H^1_h(\region)}, \qquad \forall v_h, w_h\in V_h + H_0^1(\region), \\
                &\dual{\left(E''(u)-\lambda\right)v_h, v_h} \ge \mu\norm{v_h}_{H^1_h(\region)}^2, \qquad \forall v_h\in V_h + H_0^1(\region).
            \end{align*}
        \end{lemma}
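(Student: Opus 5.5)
The plan is to reduce the statement to the continuous case on $H_0^1(\region)$ and then pass to the broken space by a compactness–contradiction argument. Write the form as
\begin{equation*}
a(v,w):=\dual{\left(E''(u)-\lambda\right)v,w}=\int_{\region}\left(\nabla_h v\cdot\nabla_h w+(V+3\beta\abs{u}^2-\lambda)vw\right)\diff r .
\end{equation*}

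\textbf{Boundedness.} This part is immediate. By Lemma~\ref{lem:reg}, $u\in C^0(\overline{\region})$, hence $u\in L^\infty$. Together with $V\in L^\infty$, Cauchy--Schwarz gives
\begin{equation*}
a(v,w)\le \left(1+\norm{V}_{L^\infty}+3\beta\norm{u}_{L^\infty}^2+\abs{\lambda}\right)\norm{v}_{H^1_h(\region)}\norm{w}_{H^1_h(\region)},
\end{equation*}
uniformly in $h$. This defines $M$.

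\textbf{Coercivity on $H_0^1(\region)$.} Following Canc\`es et al.\ \cite{cances2010numerical}, $\lambda$ is the lowest eigenvalue of the self-adjoint operator $A_u$. It is simple, with eigenfunction $u$, which can be taken positive. Let $\lambda_2>\lambda$ be the second eigenvalue. For $v=\alpha u+v^\perp$ with $(v^\perp,u)_{L^2}=0$ we get
\begin{equation*}
\dual{(A_u-\lambda)v,v}\ge(\lambda_2-\lambda)\norm{v^\perp}_{L^2}^2 ,
\end{equation*}
and interpolating with the gradient term upgrades this to $H^1$-control of $v^\perp$. The extra term $2\beta\int\abs{u}^2v^2$ controls the direction $u$: $\alpha^2\int u^4>0$, and the cross terms are absorbed by Young's inequality. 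This yields $a(v,v)\ge\mu_0\norm{v}_{H^1}^2$ on $H_0^1(\region)$. A simpler equivalent route uses compactness: $a(v,v)\ge0$, and $a(v,v)=0$ forces $v\in\ker(A_u-\lambda)=\mathrm{span}\{u\}$ together with $\int u^2v^2=0$, hence $v=0$. Gårding's inequality then gives coercivity.

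\textbf{Passage to $V_h+H_0^1(\region)$ for small $h$.} Argue by contradiction. Suppose there are $h_n\to0$ and $v_n\in V_{h_n}+H_0^1(\region)$ with $\norm{v_n}_{H^1_{h_n}}=1$ and $a(v_n,v_n)\le 1/n$.

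\emph{Discrete compactness.} By \eqref{eq:assumpt:4} the jumps satisfy $\sum_e h^{-1}\norm{[v_n]}_{L^2(e)}^2\lesssim1$. A discrete Rellich theorem for broken $H^1$ functions with this jump control (Stummel-type; the appendix lemmas presumably provide it) then gives a subsequence with $v_n\to v$ in $L^2$ and $\nabla_{h_n}v_n\rightharpoonup g$ weakly in $L^2$.

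\emph{Identifying the limit.} Apply the generalized patch test \eqref{eq:assumpt:3} with a fixed $\phi\in H^1(\region;\R^d)$:
\begin{equation*}
\int v\,\nabla\cdot\phi+\int g\cdot\phi=\lim_{n\to\infty}\mE_{h_n}(v_n,\phi)=0 .
\end{equation*}
Because $\phi$ need not vanish on $\partial\region$, this gives both $g=\nabla v$ and a zero trace, so $v\in H_0^1(\region)$.

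\emph{Conclusion.} Weak lower semicontinuity gives $\norm{\nabla v}^2\le\liminf\norm{\nabla_{h_n}v_n}^2$, while the $L^2$-weighted terms converge strongly since the weight is in $L^\infty$. Hence $a(v,v)\le\liminf a(v_n,v_n)\le0$, and the $H_0^1$ coercivity forces $v=0$. Then the zero-order terms of $a(v_n,v_n)$ tend to $0$, so $a(v_n,v_n)=\norm{\nabla_{h_n}v_n}^2+o(1)$. Also $\norm{v_n}_{L^2}\to0$, so $\norm{\nabla_{h_n}v_n}^2\to1$. Thus $a(v_n,v_n)\to1$, contradicting $a(v_n,v_n)\le1/n$. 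This produces a uniform $\mu>0$ for all sufficiently small $h$.

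The main obstacle is the discrete compactness step together with the identification $v\in H_0^1(\region)$, $g=\nabla v$. These steps are exactly where Assumption~\ref{assumpt:consist}(2)–(3) enter. I would first try to prove compactness by constructing an $H_0^1$-conforming companion (averaging/enriching operator) $J_hv_n$ with $\norm{v_n-J_hv_n}_{L^2}\lesssim h\big(\sum_e h^{-1}\norm{[v_n]}^2_{L^2(e)}\big)^{1/2}$, and then applying the classical Rellich theorem to $J_hv_n$.
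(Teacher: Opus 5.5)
Your proposal is correct and follows the paper's proof essentially step by step. The same $L^\infty$ bound gives $M$, and coercivity comes from the same contradiction argument: discrete compactness (the paper's Lemma~\ref{lem:embd}), then the generalized patch test with test fields that need not vanish on $\partial\region$ (giving $v\in H_0^1(\region)$ whose gradient is the weak limit of $\nabla_{h_n}v_n$), then weak lower semicontinuity, and finally the $H_0^1$-coercivity of $E''(u)-\lambda$ due to Canc\`es et al. The differences are minor: you sketch that $H_0^1$-coercivity instead of citing it, and you first conclude $v=0$ and then reach the contradiction through $\norm{\nabla_{h_n}v_n}_{L^2(\region)}\to 1$, whereas the paper uses that limit to rule out $v=0$ and then contradicts the continuous coercivity; also, your idea of deriving compactness from the jump bound \eqref{eq:assumpt:4} is a sound way to get the uniform $BV$ bound behind Lemma~\ref{lem:embd}.
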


        \begin{proof}
            For boundedness, we choose $M = 1 + \norm{V}_{L^{\infty}(\region)} + 3\beta \norm{u}_{L^{\infty}(\region)}^2 + \lambda$, which leads to
            \begin{equation*}
                \begin{aligned}
                    &\dual{\left(E''(u)-\lambda\right)v_h, w_h} \\
                    \le& \abs{v_h}_{H^1_h(\region)}\abs{w_h}_{H^1_h(\region)} + \left(\norm{V}_{L^{\infty}(\region)} + 3\beta \norm{u}_{L^{\infty}(\region)}^2 + \lambda\right)\norm{v_h}_{H^1_h(\region)}\norm{w_h}_{H^1_h(\region)} \\
                    \le& M\norm{v_h}_{H^1_h(\region)}\norm{w_h}_{H^1_h(\region)}. 
                \end{aligned}
            \end{equation*}
            To prove coercivity, we proceed by contradiction. Suppose that for any $h > 0$ and any $\delta > 0$, there exist a mesh size $h_{\delta} \le h$ and a function $v_{h_{\delta}}\in V_{h_{\delta}} + H_0^1(\region)$ such that 
            \begin{equation*}
                \dual{\left(E''(u)-\lambda\right)v_{h_{\delta}}, v_{h_{\delta}}}\le \delta \norm{v_{h_{\delta}}}_{H^1_h(\region)}^2, 
            \end{equation*}
            Then, one can find a sequence of mesh sizes $\{h_j\}_{j\in \mathbb{N}}$ and a corresponding sequence of functions $\{v_{h_j}\}_{j\in \mathbb{N}}$ satisfying 
            \begin{equation*}
                \lim_{j\to \infty}h_j = 0,\qquad \norm{v_{h_j}}_{H^1_h(\region)} = 1,\qquad \limsup_{j\to \infty} \dual{\left(E''(u)-\lambda\right)v_{h_j}, v_{h_j}} \le 0. 
            \end{equation*}
            By Lemma~\ref{lem:embd} and the Banach--Alaoglu theorem, there exist $v\in L^2(\region)$ and $\tilde{v}\in L^2(\region; \R^d)$, and a subsequence of $\{v_{h_j}\}$ (still denoted by $\{v_{h_j}\}$), such that  
            \begin{align*}
                &v_{h_j} \to v, \qquad \text{strongly in $L^2(\region)$}, \\
                &\nabla_{h_j} v_{h_j} \to \tilde{v}, \qquad \text{weakly in $L^2(\region)$}. 
            \end{align*}
            For any $\varphi\in C_0^{\infty}(\R^d; \R^d)$, we have 
            \begin{equation*}
                \begin{aligned}
                    \int_{\region} \tilde{v} \cdot \varphi \diff r &= \lim_{j\to \infty} \int_{\region} \nabla_{h_j} v_{h_j} \cdot \varphi \diff r\\
                    &= \lim_{j\to \infty} \left( -\int_{\region} v_{h_j} \left(\nabla \cdot \varphi\right) \diff r + \sum_{K\in \mT_{h_j}} \int_{\partial K} v_{h_j} \left(\varphi \cdot n\right) \diff S \right)\\
                    &= -\int_{\region} v \left(\nabla \cdot \varphi\right) \diff r,
                \end{aligned}
            \end{equation*}
            where the final equality follows from the generalized patch test~\eqref{eq:assumpt:3}. This implies $v\in H_0^1(\region)$ and $\nabla v = \tilde{v}$. 
            Assume that $v = 0$, then 
            \begin{equation*}
                v_{h_j} \to 0, \qquad \text{strongly in $L^2(\region)$}, 
            \end{equation*}
            which indicates that 
            \begin{align*}
                &\lim_{j\to \infty} \norm{\nabla_{h_j} v_{h_j}}_{L^2(\region)} = \lim_{j\to \infty} \norm{v_{h_j}}_{H_{h_j}^1(\region)} = 1, \\
                &\lim_{j\to \infty} \int_{\region} V \abs{v_{h_j}}^2 \diff r = \lim_{j\to \infty} \int_{\region} \beta \abs{u}^2 \abs{v_{h_j}}^2 \diff r = \lim_{j\to \infty} \lambda \norm{v_{h_j}}_{L^2(\region)}^2 = 0. 
            \end{align*}
            Therefore we obtain 
            \begin{equation*}
                0 \ge \limsup_{j\to \infty} \dual{\left(E''(u)-\lambda\right)v_{h_j}, v_{h_j}} = \lim_{j\to \infty} \norm{\nabla_{h_j} v_{h_j}}_{L^2(\region)} = 1, 
            \end{equation*}
            which yields a contradiction. Thus $v \neq 0$.
            By the weak lower semi-continuity and strong continuity of the $L^2$-norm, we obtain 
            \begin{equation*}
                \dual{\left(E''(u)-\lambda\right)v, v} \le \liminf_{n\to \infty} \dual{\left(E''(u)-\lambda\right)v_{h_j}, v_{h_j}} \le \limsup_{n\to \infty} \dual{\left(E''(u)-\lambda\right)v_{h_j}, v_{h_j}} \le 0. 
            \end{equation*}
            However, according to \cite{cances2010numerical}*{Lemma 1}, the operator $E''(u)-\lambda$ is coercive on $H_0^1(\region)$, which yields a contradiction. Thus, the coercivity is established. 
        \end{proof}

        \begin{remark}
            While Canc\`es et al. established the boundedness and coercivity of $E''(u) - \lambda$ on the space $H_0^1(\region)$ in \cite{cances2010numerical}*{Lemma 1}, we have extended this result to the broken $H^1$-space $V_h + H_0^1(\region)$. Note that coercivity implies the inf-sup condition; for further details on the properties of this condition, we refer to \cite{babuvska1971error}. 
        \end{remark}

    \section{Asymptotic convergence and error estimates for energy and eigenvalues}\label{sec:asymptotics}

        Henning and Yadav established the asymptotic convergence of the discrete ground state $u_h$, the discrete energy $E(u_h)$, and the eigenvalue $\lambda_h$ for conforming $\mathbb{P}^k$ finite elements, along with a priori error estimates for the latter two quantities~\cite{henning2025discrete}. In this section, we extend these results to the case of nonconforming finite element spaces $V_h$. We begin by stating the asymptotic convergence in Theorem~\ref{thm:asymp}.

        \begin{Theorem}\label{thm:asymp}
            For the sequence of discrete minimizers and their corresponding eigenvalues $\{(u_h, \lambda_h)\}$, there exists a ground state pair $(u, \lambda)$ and a subsequence $\{(u_{h_j}, \lambda_{h_j})\}_{j\in \mathbb{N}}$ such that
            \begin{equation*}
                \lim_{j\to \infty} E(u_{h_j}) = E(u),\qquad \lim_{j\to \infty}\norm{u-u_{h_j}}_{H^1_h(\region)} = 0,\qquad \lim_{j\to \infty}\lambda_{h_j} = \lambda. 
            \end{equation*}
            Furthermore, for any $\delta>0$, there exists a mesh size $h_{\delta}$ such that for any $h \le h_{\delta}$ and any discrete minimizer $u_h$, there exists a ground state $u$ satisfying 
            \begin{equation*}
                \norm{u-u_h}_{H^1_h(\region)}\le \delta. 
            \end{equation*}
        \end{Theorem}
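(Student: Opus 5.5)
The proof will be a compactness argument in the spirit of the proof of Lemma~\ref{lem:coerc}, combined with a recovery sequence built from the approximation property~\eqref{eq:assumpt:1}.

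\textbf{Step 1: upper bound and uniform bounds.} Fix a ground state $u\in\mM$. By~\eqref{eq:assumpt:1} there are $v_h\in V_h$ with $\norm{u-v_h}_{H^1_h(\region)}\to 0$. In particular $\norm{v_h}_{L^2(\region)}\to 1$, so we may set $\tilde v_h:=v_h/\norm{v_h}_{L^2(\region)}\in\mM_h$, and still $\norm{u-\tilde v_h}_{H^1_h(\region)}\to 0$.

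The quartic term is the only one that needs care. A broken Sobolev embedding $\norm{w}_{L^4(\region)}\lesssim\norm{w}_{H^1_h(\region)}$ on $V_h+H_0^1(\region)$ is exactly what Assumption~\ref{assumpt:consist}\eqref{eq:assumpt:4} is designed to give; I expect it to be contained in Lemma~\ref{lem:embd} of the appendix. With it, $E$ is continuous with respect to $\norm{\cdot}_{H^1_h(\region)}$ on bounded sets, so $E(\tilde v_h)\to E(u)$. Minimality then gives
\[
\limsup_{h\to0}E(u_h)\le\limsup_{h\to 0}E(\tilde v_h)=E(u).
\]
Since $V\ge0$ and $\beta\ge0$, this also yields a uniform bound on $\norm{u_h}_{H^1_h(\region)}$.

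\textbf{Step 2: compactness and the liminf inequality.} As in the proof of Lemma~\ref{lem:coerc}, Lemma~\ref{lem:embd} and Banach--Alaoglu give a subsequence with $u_{h_j}\to w$ strongly in $L^2(\region)$ and $\nabla_{h_j}u_{h_j}\rightharpoonup\tilde w$ weakly in $L^2(\region;\R^d)$. The generalized patch test~\eqref{eq:assumpt:3} then gives $w\in H_0^1(\region)$ with $\nabla w=\tilde w$, and strong $L^2$ convergence gives $\norm{w}_{L^2(\region)}=1$, so $w\in\mM$.

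For the quartic term, interpolate: $\norm{\cdot}_{L^4}\le\norm{\cdot}_{L^2}^{\theta}\norm{\cdot}_{L^6}^{1-\theta}$ with the broken $L^6$ bound for $d\le3$ (if Lemma~\ref{lem:embd} provides it), or else $L^4$-compactness directly. Either way $u_{h_j}\to w$ in $L^4(\region)$. Weak lower semicontinuity of the gradient term then yields
\[
E(w)\le\liminf_{j\to\infty} E(u_{h_j})\le E(u)=\min_{\mM}E,
\]
so $w$ is a ground state and $E(u_{h_j})\to E(w)$. Renaming $w$ as $u$ gives the first limit. Because the $V$ and quartic terms converge, $\norm{\nabla_{h_j}u_{h_j}}_{L^2(\region)}\to\norm{\nabla u}_{L^2(\region)}$. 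Norm convergence plus weak convergence gives strong convergence of the broken gradients, hence $\norm{u-u_{h_j}}_{H^1_h(\region)}\to0$.

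\textbf{Step 3: eigenvalues.} Testing~\eqref{eq:gpeh} with $u_h$ gives
\[
\lambda_h=\norm{\nabla_h u_h}_{L^2(\region)}^2+\int_{\region}V\abs{u_h}^2\diff r+\beta\norm{u_h}_{L^4(\region)}^4=2E(u_h)+\frac{\beta}{2}\norm{u_h}_{L^4(\region)}^4 .
\]
The same identity holds for $\lambda$, so $\lambda_{h_j}\to\lambda$ by Steps 1--2.

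\textbf{Step 4: uniform statement.} Argue by contradiction. Suppose there are $\delta>0$, $h_j\to0$ and discrete minimizers $u_{h_j}$ with $\norm{u-u_{h_j}}_{H^1_h(\region)}>\delta$ for every ground state $u$, of which there are only $\pm u$ by~\cite{cances2010numerical}*{Lemma 2}. Applying Steps 1--2 to this sequence yields a subsequence converging to some ground state, which is a contradiction.

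\textbf{Main obstacle.} The delicate point is controlling the quartic term in the nonconforming setting: both the uniform $L^4$ bound and the $L^4$ compactness of $u_h$ need a discrete Sobolev/Rellich embedding on $V_h+H_0^1(\region)$. I will take this from Lemma~\ref{lem:embd}. If that lemma only provides $L^2$ compactness, I will derive a broken $L^6$ bound via an enriching (averaging) operator into $H_0^1(\region)$, whose error is controlled by the jump bound~\eqref{eq:assumpt:4}, and then interpolate.
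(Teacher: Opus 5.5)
Your proposal is correct and follows the paper's proof essentially step for step. The steps are the same: a uniform $H^1_h$ bound, compactness from Lemma~\ref{lem:embd} plus the generalized patch test to get an $H_0^1(\region)$ limit, the liminf/limsup sandwich showing the limit is a ground state, weak-plus-norm convergence of the broken gradients, the eigenvalue limit, and a contradiction argument for the uniform statement.

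The small differences are to your credit. You normalize the recovery sequence so it actually lies in $\mM_h$; the paper's limsup step compares with the unnormalized $P_{H^1,h}u$ and only normalizes in Corollary~\ref{coro:bound}. Your identity $\lambda_h = 2E(u_h) + \frac{\beta}{2}\norm{u_h}_{L^4(\region)}^4$ is the correct form of the paper's mistyped formula for $\lambda_h$. Finally, the discrete Sobolev bound you need is Lemma~\ref{lem:itp}, not Lemma~\ref{lem:embd}.
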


        \begin{proof}
            According to Corollary~\ref{coro:bound}, the sequence $\{u_h\}$ is uniformly bounded in the $H^1_h$-norm for sufficiently small mesh sizes $h$. Consequently, by Lemma~\ref{lem:embd}, there exist $v\in L^2(\region)$ and $\tilde{v}\in L^2(\region; \mathbb{R}^d)$, and a subsequence $\{u_{h_j}\}$ such that $\lim\limits_{j\to \infty} h_j = 0$ and
            \begin{align*}
                &u_{h_j} \to v, \qquad \text{strongly in $L^2(\region)$ and $L^4(\region)$}, \\
                &\nabla_{h_j} u_{h_j} \to \tilde{v}, \qquad \text{weakly in $L^2(\region)$}. 
            \end{align*}
            Following a similar argument to the proof of Lemma~\ref{lem:coerc}, we obtain $v\in H_0^1(\region)$ and $\nabla v = \tilde{v}$. By the weak lower semi-continuity and strong continuity of the $L^2$-norm, it follows that 
            \begin{equation}\label{eq:asymp:1}
                \liminf_{j\to \infty} E(u_{h_j}) \ge E(v) \ge E(u). 
            \end{equation}
            Conversely, by virtue of Lemma~\ref{lem:proj} and the definition of $u_h$, we have 
            \begin{equation}\label{eq:asymp:2}
                \limsup_{j\to \infty} E(u_{h_j}) \le \limsup_{j\to \infty} E(P_{H^1,h_j} u) = E(u), 
            \end{equation}
            where the operator $P_{H^1,h}: H_0^1(\region)\to V_h$ is defined such that for any $v\in H_0^1(\region)$ and $v_h\in V_h$:
            \begin{equation*}
                (P_{H^1,h}v, v_h)_{H^1_h(\region)} = (v, v_h)_{H^1_h(\region)}. 
            \end{equation*}
            Combining \eqref{eq:asymp:1} and \eqref{eq:asymp:2}, we arrive at
            \begin{equation}\label{eq:asymp:3}
                \lim_{j\to \infty} E(u_{h_j}) = E(v) = E(u). 
            \end{equation}
            This indicates that $v$ is indeed a ground state, which we hereafter denote as $u$. From \eqref{eq:asymp:3} and the strong convergence of $\{u_{h_j}\}$ in $L^2(\region)$ and $L^4(\region)$, we have 
            \begin{equation}\label{eq:asymp:4}
                \lim_{j\to \infty} \abs{u_{h_j}}_{H^1_{h_j}(\region)} = \abs{u}_{H^1(\region)}, 
            \end{equation}
            which directly implies 
            \begin{equation*}
                \lim_{j\to \infty}\norm{u-u_{h_j}}_{H^1_{h_j}(\region)} = 0. 
            \end{equation*}
            Finally, since
            \begin{align*}
                &\lambda = \abs{u}_{H^1(\region)} + \norm{Vu}_{L^2(\region)} + \beta \norm{u}_{L^4(\region)}, \\
                &\lambda_{h_j} = \abs{u_{h_j}}_{H^1_{h_j}(\region)} + \norm{Vu_{h_j}}_{L^2(\region)} + \beta \norm{u_{h_j}}_{L^4(\region)}. 
            \end{align*}
            the strong convergence ensures that 
            \begin{equation*}
                \lim_{j\to \infty}\lambda_{h_j} = \lambda. 
            \end{equation*}
            The latter part of the theorem can be established by contradiction. If the statement were false, there would exist $\delta>0$ such that for any mesh size $h_{\delta}$, there exists a smaller size $\tilde{h}_{\delta} \le h_{\delta}$ where, for any ground state $u$, 
            \begin{equation}\label{eq:asymp:5}
                \norm{u-u_{\tilde{h}_{\delta}}}_{H^1_{\tilde{h}_{\delta}}(\region)}\ge \delta.
            \end{equation}
            Consequently, a subsequence $\{u_{h_j}\}$ could be extracted such that $\lim\limits_{j\to \infty} h_j = 0$ while \eqref{eq:asymp:5} holds for all ground states $u$. Repeating the previous convergence argument for this subsequence leads to a contradiction. 
        \end{proof}

        \begin{remark}
            The ground state $u$ is unique up to a sign. Thus, by imposing an appropriate condition, we obtain
            \begin{equation*}
                \lim_{h\to 0} \norm{u - u_h}_{H^1_h(\region)} = \lim_{h\to 0} \norm{u - u_h}_{L^2(\region)} = 0. 
            \end{equation*}
        \end{remark}

        The following theorems provide a priori error estimates for the eigenvalue $\lambda_h$ and the discrete energy $E(u_h)$.

        \begin{Theorem}\label{thm:eig}
            For sufficiently small mesh sizes $h$, the error between the exact ground state eigenvalue $\lambda$ and the discrete eigenvalue $\lambda_h$ satisfies
            \begin{equation*}
                \abs{\lambda - \lambda_h} \lesssim \norm{u-u_h}_{H^1_h(\region)}^2 + \norm{u-u_h}_{L^2(\region)} + \abs{\mE_h(u_h, \nabla u)}. 
            \end{equation*}
        \end{Theorem}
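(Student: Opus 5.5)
We follow the standard identity used for conforming discretizations of the Gross--Pitaevskii eigenvalue problem (as in Canc\`es et al.). The nonconforming setting adds one new ingredient: the integration-by-parts defect, which produces the term $\mE_h(u_h,\nabla u)$. By Lemma~\ref{lem:reg}, $u\in H^2(\region)$, so $\nabla u\in H^1(\region;\R^d)$ and the consistency functional is well defined. Testing \eqref{eq:gpeh} with $v_h=u_h$ and using $\norm{u_h}_{L^2}=1$ gives
\begin{equation*}
\lambda_h=\dual{A_{u_h}u_h,u_h}=\abs{u_h}_{H^1_h}^2+\int_\region V u_h^2\diff r+\beta\int_\region u_h^4\diff r .
\end{equation*}
Similarly, testing \eqref{eq:gpe} with $u$ gives $\lambda=\dual{A_uu,u}$.

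\emph{Step 1: the bilinear identity.} Set $e=u_h-u\in V_h+H_0^1(\region)$ and expand
\begin{equation*}
\dual{A_u u_h,u_h}=\dual{A_uu,u}+2\dual{A_uu,e}+\dual{A_ue,e}.
\end{equation*}
The cross term is where nonconformity enters. Since $-\Delta u+Vu+\beta u^3=\lambda u$ holds in $L^2$, we write $\int\nabla u\cdot\nabla_h e=-\int(\Delta u)e+\mE_h(e,\nabla u)$. Because $\mE_h(u,\nabla u)=0$ for $u\in H_0^1$, this equals $-\int(\Delta u)e+\mE_h(u_h,\nabla u)$. Hence
\begin{equation*}
\dual{A_uu,e}=\lambda(u,e)_{L^2}+\mE_h(u_h,\nabla u).
\end{equation*}
Moreover, $\norm{u}_{L^2}=\norm{u_h}_{L^2}=1$ gives $2(u,e)=-\norm{e}_{L^2}^2$. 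Therefore
\begin{equation*}
\dual{A_uu_h,u_h}-\lambda=-\lambda\norm{e}_{L^2}^2+2\mE_h(u_h,\nabla u)+\dual{A_ue,e}.
\end{equation*}
The last term is bounded by $\norm{e}_{H^1_h}^2$, with constant depending on $\norm{V}_{L^\infty}$, $\beta\norm{u}_{L^\infty}^2$ (finite by Lemma~\ref{lem:reg}) and $\lambda$. The first term is also bounded by $\norm{e}_{H^1_h}^2$.

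\emph{Step 2: the nonlinear remainder.} We have
\begin{equation*}
\lambda_h-\dual{A_uu_h,u_h}=\beta\int_\region(u_h^2-u^2)u_h^2\diff r=\beta\int_\region e\,(u_h+u)\,u_h^2\diff r .
\end{equation*}
By H\"older's inequality this is bounded by $\beta\norm{e}_{L^2}\norm{u_h+u}_{L^6}\norm{u_h}_{L^6}^2$. This requires a uniform bound on $\norm{u_h}_{L^6}$. We obtain it from a discrete Sobolev embedding for $V_h+H_0^1(\region)$ (the $H^1_h\hookrightarrow L^6$ analogue of Lemma~\ref{lem:embd}, using \eqref{eq:assumpt:4}) together with Corollary~\ref{coro:bound}, which gives a uniform bound on $\norm{u_h}_{H^1_h}$ for small $h$. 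This yields a contribution of order $\norm{u-u_h}_{L^2}$, which is the linear $L^2$ term in the statement.

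\emph{Step 3: assembly, and the main obstacle.} Combining the two steps gives
\begin{equation*}
\abs{\lambda-\lambda_h}\lesssim\norm{u-u_h}_{H^1_h}^2+\norm{u-u_h}_{L^2}+\abs{\mE_h(u_h,\nabla u)}.
\end{equation*}
The sign of $u$ is chosen as in Theorem~\ref{thm:asymp}, and "sufficiently small $h$" is needed only for the uniform bounds. The main obstacle is the $L^6$ (or at least $L^4$) control of the discrete functions in the nonlinear term. We will justify it via the broken Sobolev inequality, which relies on the jump control \eqref{eq:assumpt:4}. If only $L^4$ control is available, we fall back to the alternative bound $\norm{e}_{L^4}\norm{u_h+u}_{L^4}\norm{u_h}_{L^4}^2$ and absorb $\norm{e}_{L^4}$ by interpolation. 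A secondary point of care is the sign bookkeeping in $\mE_h(e,\nabla u)=\mE_h(u_h,\nabla u)$.
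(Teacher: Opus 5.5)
Your argument is correct and is essentially the paper's proof. The paper expands $\dual{(A_u-\lambda)(u-u_h),u-u_h}$, which is your $\dual{A_ue,e}-\lambda\norm{e}_{L^2(\region)}^2$, identifies $\dual{(A_u-\lambda)u,u_h}=\mE_h(u_h,\nabla u)$ via $H^2$-regularity, and bounds the nonlinear remainder with the same $L^2\cdot L^6\cdot L^6\cdot L^6$ H\"older split together with Corollary~\ref{coro:bound}.

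The $L^6$ bound you flag as the main obstacle is exactly Lemma~\ref{lem:itp} (built on \eqref{eq:assumpt:4}), so your $L^4$ fallback is unnecessary. That is just as well: with only $L^4$ control, interpolation gives at best a sublinear power of $\norm{e}_{L^2(\region)}$, not the linear $L^2$ term in the statement.
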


        \begin{proof}
            Direct calculation yields
            \begin{equation}\label{eq:eig:1}
                \begin{aligned}
                    &\dual{\left(A_u-\lambda\right)(u-u_h), u-u_h} \\
                    =& \dual{A_u u, u} - 2\dual{A_u u, u_h} + \dual{A_u u_h, u_h} - 2\lambda + 2\lambda(u, u_h)_{L^2(\region)} \\
                    =& - 2\dual{A_u u, u_h} + \dual{A_{u_h} u_h, u_h} + \left(\beta \left(|u|^2-|u_h|^2\right)u_h, u_h\right)_{L^2(\region)} - \lambda + 2\lambda(u, u_h)_{L^2(\region)} \\
                    =& \lambda_h - \lambda - 2\dual{\left(A_u-\lambda\right)u, u_h} + \left(\beta \left(|u|^2-|u_h|^2\right)u_h, u_h\right)_{L^2(\region)}.
                \end{aligned}
            \end{equation}
            By Lemma~\ref{lem:reg}, the exact ground state $u$ resides in $H_0^1(\region)\cap H^2(\region)$; thus, the eigenvalue equation~\eqref{eq:gpe} holds almost everywhere, leading to
            \begin{equation}\label{eq:eig:2}
                \dual{\left(A_u-\lambda \right)u, u_h} = \sum_{K\in\mT_h} \int_K \left(\nabla_h u_h \cdot \nabla u + u_h\Delta u\right)\diff r = \mE_h(u_h, \nabla u). 
            \end{equation}
            In a manner similar to the proof of boundedness in Lemma~\ref{lem:coerc}, we have 
            \begin{equation}\label{eq:eig:3}
                \abs{\dual{\left(A_u-\lambda\right)(u-u_h), u-u_h}} \le M \norm{u-u_h}_{H^1_h(\region)}^2. 
            \end{equation}
            Furthermore, applying the H\"older inequality and Lemma~\ref{lem:itp} gives
            \begin{equation}\label{eq:eig:4}
                \begin{aligned}
                    \left(\beta \left(\abs{u}^2-\abs{u_h}^2\right)u_h, u_h\right)_{L^2(\region)} &\le \beta \norm{u-u_h}_{L^2(\region)} \norm{u+u_h}_{L^6(\region)}\norm{u_h}_{L^6(\region)}^2 \\
                    &\lesssim \beta \norm{u-u_h}_{L^2(\region)} \left(\norm{u}_{H^1(\region)} + \norm{u_h}_{H^1_h(\region)}\right)\norm{u_h}_{H^1_h(\region)}^2. 
                \end{aligned}
            \end{equation}
            Notice that $\norm{u_h}_{H^1_h(\region)}$ is uniformly bounded for sufficiently small $h$. Substituting \eqref{eq:eig:2}--\eqref{eq:eig:4} into \eqref{eq:eig:1}, we conclude that
            \begin{equation*}
                \abs{\lambda - \lambda_h} \lesssim \norm{u-u_h}_{H^1_h(\region)}^2 + \norm{u-u_h}_{L^2(\region)} + \abs{\mE_h(u_h, \nabla u)}. 
            \end{equation*}
            This completes the proof.
        \end{proof}

        \begin{Theorem}\label{thm:energy}
            For the exact energy $E(u)$ and the discrete energy $E(u_h)$, the following estimate holds for sufficiently small mesh sizes $h$:
            \begin{equation*}
                \abs{E(u) - E(u_h)} \lesssim \norm{u - u_h}_{H^1_h(\region)}^2 + \abs{\mE_h(u_h, \nabla u)}. 
            \end{equation*}
        \end{Theorem}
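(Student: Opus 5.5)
Following the pattern of Theorem~\ref{thm:eig}, I will use the exact identity obtained by expanding $E(u_h)-E(u)$ around $u$, in which the first-order term collapses to the consistency error. First rewrite the difference using the operator $A_u$. Since $\norm{u}_{L^2(\region)}=\norm{u_h}_{L^2(\region)}=1$, I write
\begin{equation*}
E(u_h)-E(u) = \tfrac12\dual{(A_u-\lambda)(u_h-u),u_h-u} + \dual{(A_u-\lambda)u,u_h-u} + R,
\end{equation*}
where $R$ collects the quartic remainder. Concretely, $\tfrac12\dual{A_u v,v}=\tfrac12\int(\abs{\nabla_h v}^2+V v^2+\beta u^2v^2)$, so
\begin{equation*}
E(u_h)-\tfrac12\dual{A_uu_h,u_h}=\tfrac{\beta}{4}\int(u_h^4-2u^2u_h^2).
\end{equation*}
The $\lambda$-terms cancel because $\lambda\big(\tfrac12\norm{u_h-u}^2_{L^2(\region)}+(u,u_h-u)_{L^2(\region)}\big)=\tfrac{\lambda}{2}(\norm{u_h}^2_{L^2(\region)}-1)=0$.

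\textbf{Step 1: the linear term.} The quantity $\dual{(A_u-\lambda)u,u}$ vanishes by \eqref{eq:gpe}, and by \eqref{eq:eig:2} we have $\dual{(A_u-\lambda)u,u_h}=\mE_h(u_h,\nabla u)$. Hence the linear term equals $\mE_h(u_h,\nabla u)$.

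\textbf{Step 2: the quadratic term.} As in \eqref{eq:eig:3}, this term is bounded by $\tfrac M2\norm{u-u_h}^2_{H^1_h(\region)}$.

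\textbf{Step 3: the remainder (main obstacle).} The remainder is
\begin{equation*}
R=\tfrac{\beta}{4}\int\big(u_h^4-2u^2u_h^2+u^4\big)=\tfrac\beta4\int(u_h^2-u^2)^2=\tfrac\beta4\int(u_h-u)^2(u_h+u)^2 .
\end{equation*}
To verify this, note that the constant part $E(u)-\tfrac12\dual{A_uu,u}=-\tfrac\beta4\int u^4$. I would double-check this algebra carefully, since it is precisely where a linear $L^2$ error term could otherwise appear. Once established, R is a square, and by H\"older together with the discrete Sobolev embedding of Lemma~\ref{lem:itp},
\begin{equation*}
\abs{R}\lesssim \norm{u-u_h}^2_{L^6(\region)}\norm{u+u_h}_{L^3(\region)}^2\lesssim \norm{u-u_h}^2_{H^1_h(\region)},
\end{equation*}
using the uniform bound on $\norm{u_h}_{H^1_h(\region)}$ for small $h$ (Corollary~\ref{coro:bound}). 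Here $\norm{u-u_h}_{L^6(\region)}\lesssim\norm{u-u_h}_{H^1_h(\region)}$ holds on $V_h+H_0^1(\region)$.

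Combining Steps 1--3 with the triangle inequality gives $\abs{E(u)-E(u_h)}\lesssim\norm{u-u_h}^2_{H^1_h(\region)}+\abs{\mE_h(u_h,\nabla u)}$. The only delicate point is the exact cancellation in Step 3, which is what removes the $\norm{u-u_h}_{L^2(\region)}$ term that appeared in Theorem~\ref{thm:eig}.
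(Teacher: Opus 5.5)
Your proposal is correct and takes essentially the same route as the paper. You use the same exact splitting of $E(u_h)-E(u)$ into three pieces: the quadratic form of $A_u-\lambda$, bounded as in \eqref{eq:eig:3}; the linear term, identified with $\mE_h(u_h,\nabla u)$ via \eqref{eq:eig:2}; and the remainder $\frac{\beta}{4}\int_{\region}(\abs{u_h}^2-\abs{u}^2)^2\diff r$. The only cosmetic difference is that you bound the remainder with an $L^6$--$L^3$ H\"older pairing rather than the paper's $L^4$--$L^4$ pairing; both are closed by Lemma~\ref{lem:itp} and Corollary~\ref{coro:bound}.
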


        \begin{proof}
            By direct computation, we have 
            \begin{equation}\label{eq:energy:1}
                E(u_h) - E(u) = \frac{1}{2}\left( \dual{A_u u_h, u_h} - \dual{A_u u, u} \right) + \frac{\beta}{4}\int_{\region} \left(\abs{u_h}^2 - \abs{u}^2\right)^2 \diff r. 
            \end{equation}
            For the first term in \eqref{eq:energy:1}, it holds that
            \begin{equation*}
                \dual{A_u u_h, u_h} - \dual{A_u u, u} = \dual{\left(A_u - \lambda\right)(u-u_h), u-u_h} + 2\dual{\left(A_u - \lambda\right)u, u_h}. 
            \end{equation*}
            Utilizing \eqref{eq:eig:2} and \eqref{eq:eig:3}, we obtain 
            \begin{equation}\label{eq:energy:2}
                \abs{\dual{A_u u_h, u_h} - \dual{A_u u, u}} \lesssim \norm{u-u_h}_{H^1_h(\region)}^2 + \abs{\mE_h(u_h, \nabla u)}. 
            \end{equation}
            Regarding the second term in \eqref{eq:energy:1}, we observe that
            \begin{equation}\label{eq:energy:3}
                \begin{aligned}
                    \int_{\region} \left(\abs{u_h}^2 - \abs{u}^2\right)^2 \diff r &\le \int_{\region} \left(u_h - u\right)^2 \left(u_h + u\right)^2 \diff r\\
                    &\le \norm{u - u_h}_{L^4(\region)}^2 \norm{u_h + u}_{L^4(\region)}^2 \\ 
                    &\lesssim \norm{u - u_h}_{H^1_h(\region)}^2\norm{u_h + u}_{H^1_h(\region)}^2 \\
                    &\lesssim \norm{u - u_h}_{H^1_h(\region)}^2, 
                \end{aligned}
            \end{equation}
            where the final inequality follows from the uniform boundedness of $\norm{u_h}_{H^1_h(\region)}$. Substituting \eqref{eq:energy:2} and \eqref{eq:energy:3} into \eqref{eq:energy:1} yields
            \begin{equation*}
                \abs{E(u) - E(u_h)} \lesssim \norm{u - u_h}_{H^1_h(\region)}^2 + \abs{\mE_h(u_h, \nabla u)}. 
            \end{equation*}
            This completes the proof.
        \end{proof}

        \begin{remark}
            Canc\`es derived a priori error estimates for eigenvalues and discrete energies in general nonlinear eigenvalue problems and their finite-dimensional approximations~\cite{cances2010numerical}. Subsequently, Henning and Yadav extended these estimates to rotating Bose--Einstein condensates using conforming $\mathbb{P}^k$ finite elements~\cite{henning2025discrete}. In Theorems~\ref{thm:eig} and~\ref{thm:energy}, we consider the Gross--Pitaevskii eigenvalue problem without rotation and generalize these a priori error estimates to nonconforming finite element spaces.
        \end{remark}

    \section{A priori error estimates}\label{sec:prior}

        Canc\`es provided a priori error estimates in the $H^1$- and $L^2$-norms for discrete ground states $u_h$ relative to the exact ground state $u$ within the context of general nonlinear eigenvalue problems and their finite-dimensional approximations~\cite{cances2010numerical}. In this section, we extend these estimates to the nonconforming finite element approximation of the eigenvalue problem~\eqref{eq:gpe}.

        \begin{Theorem}
            Assume that the mesh size $h$ is sufficiently small and that the chosen discrete ground state satisfies $\lim\limits_{h\to 0}\norm{u - u_h}_{H^1_h(\region)} = 0$. Then, the following estimate holds:
            \begin{equation*}
                \norm{u - u_h}_{H^1_h(\region)} \lesssim \inf_{v_h\in V_h} \norm{u - v_h}_{H^1_h(\region)} + \sup_{\substack{w_h \in V_h \\ \norm{w_h}_{H^1_h(\region)}\le 1}} \abs{\mE_h\left(w_h, \nabla u\right)}. 
            \end{equation*}
        \end{Theorem}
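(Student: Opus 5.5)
Following Cancès et al., I will compare $u_h$ with a discrete projection of $u$ adapted to the operator $E''(u)-\lambda$, and use the coercivity of Lemma~\ref{lem:coerc} on $V_h$.

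\textbf{Step 1: projection.} Define $\Pi_h u\in V_h$ by
\begin{equation*}
\dual{(E''(u)-\lambda)\Pi_h u, w_h} = \dual{(E''(u)-\lambda) u, w_h}_h \quad \forall w_h\in V_h,
\end{equation*}
where the right-hand side is the broken bilinear form. Lemma~\ref{lem:coerc} gives existence and the Céa-type bound
\begin{equation*}
\norm{u-\Pi_h u}_{H^1_h(\region)}\lesssim \inf_{v_h\in V_h}\norm{u-v_h}_{H^1_h(\region)}.
\end{equation*}
This holds because the form is bounded and coercive on $V_h+H_0^1(\region)$, so Galerkin orthogonality applies directly in the broken sense.

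\textbf{Step 2: error equation.} Set $e_h=\Pi_h u-u_h\in V_h$, so that $\mu\norm{e_h}^2_{H^1_h}\le \dual{(E''(u)-\lambda)e_h,e_h}$. I expand $\dual{(E''(u)-\lambda)\Pi_h u,e_h}=\dual{(E''(u)-\lambda)u,e_h}$. As in \eqref{eq:eig:2}, $\dual{(A_u-\lambda)u,e_h}=\mE_h(e_h,\nabla u)$, and $\dual{2\beta|u|^2u,e_h}$ remains. For $u_h$, the discrete equation \eqref{eq:gpeh} gives $\dual{A_{u_h}u_h,e_h}=\lambda_h(u_h,e_h)$. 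Subtracting these, $\dual{(E''(u)-\lambda)e_h,e_h}$ equals $\mE_h(e_h,\nabla u)$ plus a remainder of the form
\begin{equation*}
(\lambda-\lambda_h)(u_h,e_h)+\beta\big((|u_h|^2-|u|^2)u_h - 2|u|^2(u_h-u),\,e_h\big).
\end{equation*}
The cubic term is quadratic in $u-u_h$, namely $\beta((u_h-u)^2(u_h+2u),e_h)$. By Hölder and the discrete Sobolev embedding (Lemma~\ref{lem:itp}, $L^6$ in $d\le3$), it is bounded by $\norm{u-u_h}_{H^1_h}^2\norm{e_h}_{H^1_h}$.

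\textbf{Step 3: the eigenvalue term (main obstacle).} The term $(\lambda-\lambda_h)(u_h,e_h)$ is the delicate one. Theorem~\ref{thm:eig} bounds $|\lambda-\lambda_h|$ by $\norm{u-u_h}_{L^2}$ plus the consistency term, and this is only first order, which is not small enough by itself. Instead I will exploit the normalization. First, $(u_h,e_h)=(u_h,\Pi_h u-u)+(u_h,u-u_h)$, and
\begin{equation*}
(u_h,u-u_h)=-\tfrac12\norm{u-u_h}^2_{L^2}
\end{equation*}
since both functions have unit mass. Second, $|\lambda-\lambda_h|\to0$ by Theorem~\ref{thm:asymp}. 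Together these give
\begin{equation*}
|(\lambda-\lambda_h)(u_h,e_h)|\le o(1)\big(\norm{u-\Pi_hu}_{H^1_h}+\norm{u-u_h}_{H^1_h}\big)\norm{e_h}_{H^1_h}.
\end{equation*}
If this is insufficient, I would fall back on Cancès's device of working on the tangent space $u^\perp$, that is, projecting $e_h$ orthogonally to $u$, where the $\lambda$-shift is controlled.

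\textbf{Step 4: absorption.} Dividing by $\norm{e_h}_{H^1_h}$ and using the triangle inequality $\norm{u-u_h}\le\norm{u-\Pi_hu}+\norm{e_h}$, I obtain
\begin{equation*}
\norm{u-u_h}_{H^1_h}\lesssim \inf_{v_h\in V_h}\norm{u-v_h}_{H^1_h}+\sup_{w_h}|\mE_h(w_h,\nabla u)| + \big(o(1)+\norm{u-u_h}_{H^1_h}\big)\norm{u-u_h}_{H^1_h}.
\end{equation*}
The hypothesis $\norm{u-u_h}_{H^1_h}\to0$ makes the last term absorbable into the left-hand side for $h$ small, which yields the claim.
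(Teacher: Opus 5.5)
Steps 1 and 2 are fine: the C\'ea bound for your projection holds, and the error identity with cubic remainder $\beta((u_h-u)^2(u_h+2u),e_h)$ is correct. But Step 3 has a genuine gap, exactly at the point you flagged. The decomposition $(u_h,e_h)=(u_h,\Pi_hu-u)-\tfrac12\norm{u-u_h}_{L^2(\region)}^2$ only yields
\[
\abs{(\lambda-\lambda_h)(u_h,e_h)}\le\abs{\lambda-\lambda_h}\Bigl(\norm{u-\Pi_hu}_{L^2(\region)}+\tfrac12\norm{u-u_h}_{L^2(\region)}^2\Bigr).
\]
There is no factor $\norm{e_h}_{H^1_h(\region)}$ here, so the inequality you write carries an extra power of the error that nothing produces. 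After dividing by $\norm{e_h}_{H^1_h(\region)}$ in Step 4, you are left with $\abs{\lambda-\lambda_h}\norm{u-\Pi_hu}_{L^2(\region)}/\norm{e_h}_{H^1_h(\region)}$, which is uncontrolled when $e_h$ is small. The other routes also fail. The cruder bound $\abs{(u_h,e_h)}\le\norm{e_h}_{L^2(\region)}$ leaves $\abs{\lambda-\lambda_h}$ itself, and Theorem~\ref{thm:eig} only gives $\abs{\lambda-\lambda_h}\lesssim\norm{u-u_h}_{L^2(\region)}+\dots$ with a non-small constant, so it cannot be absorbed. Knowing only $\abs{\lambda-\lambda_h}=o(1)$ gives merely $\norm{e_h}_{H^1_h(\region)}\lesssim(o(1)\norm{u-\Pi_hu}_{H^1_h(\region)})^{1/2}$. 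The fallback to $u^\perp$ is not an argument.

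The missing idea, which the paper uses, is to compare $u_h$ with a \emph{normalized} $v_h\in\mM_h$ instead of an unnormalized projection. Then $(u_h,u_h-v_h)_{L^2(\region)}=\tfrac12\norm{u_h-v_h}_{L^2(\region)}^2$ is quadratic in the discrete error. The eigenvalue term is thus bounded by $\abs{\lambda-\lambda_h}\norm{u_h-v_h}_{H^1_h(\region)}^2$ and absorbed into the coercivity term since $\abs{\lambda-\lambda_h}\to0$. The piece $\dual{(E''(u)-\lambda)(u-v_h),u_h-v_h}$ is handled by boundedness alone, so no Galerkin projection is needed. One concludes with $\inf_{v_h\in\mM_h}\norm{u-v_h}_{H^1_h(\region)}\lesssim\inf_{v_h\in V_h}\norm{u-v_h}_{H^1_h(\region)}$ as in Canc\`es et al.

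Alternatively, your $\Pi_hu$ route can be rescued by not dividing by $\norm{e_h}_{H^1_h(\region)}$ prematurely. Let $S$ be the consistency supremum, and use $\abs{\mE_h(u_h,\nabla u)}\le\norm{u_h}_{H^1_h(\region)}S$ to get $\abs{\lambda-\lambda_h}\lesssim\norm{u-u_h}_{H^1_h(\region)}+S$. Then apply Young's inequality to $\norm{u-u_h}_{H^1_h(\region)}\norm{u-\Pi_hu}_{H^1_h(\region)}$ and to the terms linear in $\norm{e_h}_{H^1_h(\region)}$. Finally absorb using $\norm{u-u_h}_{H^1_h(\region)}\le\norm{u-\Pi_hu}_{H^1_h(\region)}+\norm{e_h}_{H^1_h(\region)}$.
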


        \begin{proof}
            For an arbitrary $v_h\in V_h$, the triangle inequality yields
            \begin{equation}\label{eq:priorH1:1}
                \norm{u - u_h}_{H^1_h(\region)}\le \norm{u_h - v_h}_{H^1_h(\region)} + \norm{u - v_h}_{H^1_h(\region)}. 
            \end{equation}
            Invoking the coercivity result from Lemma~\ref{lem:coerc}, we have
            \begin{equation}\label{eq:priorH1:2}
                \begin{aligned}
                    \norm{u_h - v_h}_{H^1_h(\region)}^2 &\le \mu^{-1} \dual{(E''(u) - \lambda)(u_h - v_h), u_h - v_h} \\
                    &\le \mu^{-1} \left(\dual{(E''(u) - \lambda)(u_h - u), u_h - v_h} + \dual{(E''(u) - \lambda)(u - v_h), u_h - v_h}\right). 
                \end{aligned}
            \end{equation}
            For any $w_h\in V_h$, direct expansion shows that
            \begin{equation}\label{eq:priorH1:3}
                \begin{aligned}
                    &\dual{(E''(u) - \lambda)(u_h - u), w_h} \\
                    =& \dual{(A_u - \lambda)(u_h - u), w_h} + \int_{\region} 2\beta \abs{u}^2 (u_h - u) w_h \diff r\\
                    =& (\lambda_h - \lambda)(u_h, w_h)_{L^2(\region)} - \sum_{K\in\mT_h} \int_{\partial K} w_h(\nabla u \cdot n) \diff S\\
                    &\quad - \int_{\region}\beta (u - u_h)^2 (2u + u_h) w_h \diff r. 
                \end{aligned}
            \end{equation}
            For $v_h\in \mM_h$, we observe that
            \begin{equation}\label{eq:priorH1:4}
                (u_h, u_h - v_h)_{L^2(\region)} = 1 - (u_h, v_h)_{L^2(\region)} = \frac{1}{2}\norm{u_h - v_h}_{L^2(\region)}^2 \le \frac{1}{2}\norm{u_h - v_h}_{H^1_h(\region)}^2. 
            \end{equation}
            Furthermore, Lemma~\ref{lem:itp} implies
            \begin{equation}\label{eq:priorH1:5}
                \abs{\int_{\region}\beta (u - u_h)^2 (2u + u_h) w_h \diff r} \lesssim \norm{u - u_h}_{H^1_h(\region)}^2\norm{w_h}_{H^1_h(\region)}. 
            \end{equation}
            Substituting \eqref{eq:priorH1:4}, \eqref{eq:priorH1:5}, and the eigenvalue estimate from Theorem~\ref{thm:eig} into \eqref{eq:priorH1:3}, we obtain for $v_h\in V_h$:
            \begin{equation}\label{eq:priorH1:6}
                \begin{aligned}
                    &\abs{\dual{(E''(u) - \lambda)(u_h - u), u_h - v_h}} \\
                    \lesssim& \left( \norm{u - u_h}_{H^1_h(\region)}^2 + \norm{u - u_h}_{L^2(\region)} + \abs{\mE_h\left(u_h, \nabla u\right)}\right)\norm{u_h - v_h}_{H^1_h(\region)}^2 \\
                    &\quad + \norm{u - u_h}_{H^1_h(\region)}^2\norm{u_h - v_h}_{H^1_h(\region)} + \abs{\mE_h\left(u_h - v_h, \nabla u\right)}. 
                \end{aligned}
            \end{equation}
            Additionally, the boundedness established in Lemma~\ref{lem:coerc} gives
            \begin{equation}\label{eq:priorH1:7}
                \abs{\dual{(E''(u) - \lambda)(u - v_h), u_h - v_h}} \le M\norm{u - v_h}_{H^1_h(\region)}\norm{u_h - v_h}_{H^1_h(\region)}. 
            \end{equation}
            Inserting \eqref{eq:priorH1:6} and \eqref{eq:priorH1:7} into \eqref{eq:priorH1:2}, for sufficiently small $h$, it follows that
            \begin{equation}\label{eq:priorH1:8}
                \begin{aligned}
                    \norm{u_h - v_h}_{H^1_h(\region)} &\lesssim \norm{u - u_h}_{H^1_h(\region)}^2 + \norm{u - v_h}_{H^1_h(\region)} + \abs{\mE_h\left(\frac{u_h - v_h}{\norm{u_h - v_h}_{H^1_h(\region)}}, \nabla u\right)} \\
                    &\le \norm{u - u_h}_{H^1_h(\region)}^2 + \norm{u - v_h}_{H^1_h(\region)} + \sup_{\substack{w_h \in V_h \\ \norm{w_h}_{H^1_h(\region)}\le 1}} \abs{\mE_h\left(w_h, \nabla u\right)}. 
                \end{aligned}
            \end{equation}
            Combining \eqref{eq:priorH1:8} with \eqref{eq:priorH1:1}, there exists a constant $C > 0$ such that 
            \begin{equation*}
                \left( 1 - C \norm{u - u_h}_{H^1_h(\region)} \right)\norm{u - u_h}_{H^1_h(\region)}\le (C + 1) \norm{u - v_h}_{H^1_h(\region)} + C \sup_{\substack{w_h \in V_h \\ \norm{w_h}_{H^1_h(\region)}\le 1}} \abs{\mE_h\left(w_h, \nabla u\right)}. 
            \end{equation*}
            Noting that $\lim_{h\to 0}\norm{u - u_h}_{H^1_h(\region)} = 0$, when the mesh size $h$ is sufficiently small, we have 
            \begin{equation}
                \norm{u - u_h}_{H^1_h(\region)} \lesssim \inf_{\substack{v_h \in V_h \\ \norm{v_h}_{L^2(\region)} = 1}} \norm{u - v_h}_{H^1_h(\region)} + \sup_{\substack{w_h \in V_h \\ \norm{w_h}_{H^1_h(\region)}\le 1}} \abs{\mE_h\left(w_h, \nabla u\right)}. 
            \end{equation}
            Following the proof of~\cite{cances2010numerical}*{Theorem 1}, the following inequality holds 
            \begin{equation}\label{eq:priorH1:9}
                \inf_{\substack{v_h \in V_h \\ \norm{v_h}_{L^2(\region)} = 1}} \norm{u - v_h}_{H^1_h(\region)} \lesssim \inf_{v_h\in V_h} \norm{u - v_h}_{H^1_h(\region)}. 
            \end{equation}
            Finally, considering the approximation property of $V_h$, we arrive at the desired estimate:
            \begin{equation*}
                \norm{u - u_h}_{H^1_h(\region)} \lesssim \inf_{v_h\in V_h} \norm{u - v_h}_{H^1_h(\region)} + \sup_{\substack{w_h \in V_h \\ \norm{w_h}_{H^1_h(\region)}\le 1}} \abs{\mE_h\left(w_h, \nabla u\right)}. 
            \end{equation*}
            The proof is complete. 
        \end{proof}

        \begin{remark}
            In \cite{cances2010numerical}*{Theorem 1}, Canc\`es et al. proved that 
            \begin{equation*}
                \inf_{\substack{v_{\delta} \in X_{\delta} \\ \norm{v_{\delta}}_{L^2(\region)} = 1}} \norm{u - v_{\delta}}_{H^1(\region)} \lesssim \inf_{v_{\delta}\in X_{\delta}} \norm{u - v_{\delta}}_{H^1(\region)}, 
            \end{equation*}
            where $X_{\delta}$ is a finite-dimensional subspace of $H_0^1(\region)$. In \eqref{eq:priorH1:9}, the space $V_h$ is nonconforming, which is not a subspace of $H_0^1(\region)$, but the proof for nonconforming element spaces is the same. 
        \end{remark}

        \begin{Theorem}
            Assume that the mesh size $h$ is sufficiently small and that $\lim\limits_{h\to 0}\norm{u - u_h}_{H^1_h(\region)} = 0$. Then,
            \begin{equation*}
                \norm{u - u_h}_{L^2(\region)}^2 \lesssim \norm{u - u_h}_{H^1_h(\region)}\min_{v_h\in V_h} \norm{\psi - v_h}_{H^1_h(\region)} + \abs{\mE_h(u_h, \nabla \psi)} + \abs{\mE_h(\psi_h, \nabla u)},
            \end{equation*}
            where $\psi\in u^{\perp}\cap H_0^1(\region)$ is the unique solution to the dual problem
            \begin{equation*}
                \dual{(E''(u) - \lambda)\psi, v} = (u - u_h, v)_{L^2(\region)}, \qquad \forall v\in u^{\perp}\cap H_0^1(\region), 
            \end{equation*}
            and $\psi_h\in u^{\perp}\cap V_h$ satisfies
            \begin{equation*}
                (\psi_h, v_h)_{H^1_h(\region)} = (\psi, v_h)_{H^1_h(\region)}, \qquad \forall v_h\in u^{\perp}\cap V_h. 
            \end{equation*}
            Here, $u^{\perp} := \{v\in L^2(\region) : (u, v)_{L^2(\region)} = 0\}$. 
        \end{Theorem}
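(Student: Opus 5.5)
We follow the Aubin--Nitsche duality argument of \cite{cances2010numerical}*{Theorem 1}, with the nonconforming consistency errors kept track of through $\mE_h$. The starting point is to split $u-u_h$ into its component along $u$ and its component in $u^{\perp}$. Write $u-u_h = \big(1-(u,u_h)_{L^2(\region)}\big)u + w$ with $w\in u^{\perp}$. Since $\norm{u}_{L^2(\region)}=\norm{u_h}_{L^2(\region)}=1$, we have $1-(u,u_h)_{L^2(\region)}=\frac12\norm{u-u_h}_{L^2(\region)}^2$. Hence
\begin{equation*}
\norm{u-u_h}_{L^2(\region)}^2 = (u-u_h, w)_{L^2(\region)} + \tfrac12\norm{u-u_h}_{L^2(\region)}^2 (u-u_h,u)_{L^2(\region)},
\end{equation*}
and the last term is $O(\norm{u-u_h}_{L^2(\region)}^4)$, so it can be absorbed for small $h$. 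It therefore suffices to bound $(u-u_h,w)_{L^2(\region)}$. The dual problem is posed on $u^{\perp}\cap H_0^1(\region)$, where $E''(u)-\lambda$ is coercive (Lemma~\ref{lem:coerc}), so $\psi$ exists uniquely and $(u-u_h,w)_{L^2(\region)}$ is what the dual problem tests. The issue is that $w\notin H_0^1(\region)$. We therefore extend the dual identity to broken test functions. Provided $\psi$ is regular enough (elliptic regularity on the convex domain gives $\psi\in H^2(\region)$, with a multiplier term $\gamma u$ on the right-hand side), integration by parts in the broken sense gives, for $v\in u^{\perp}\cap(V_h+H_0^1(\region))$,
\begin{equation*}
\dual{(E''(u)-\lambda)\psi, v} = (u-u_h,v)_{L^2(\region)} + \mE_h(v,\nabla\psi).
\end{equation*}

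Apply this with $v=w$. The $u$-component of $u_h$ differs from $w$ only by a multiple of $u\in H_0^1(\region)$, for which $\mE_h$ vanishes, so this produces the term $\abs{\mE_h(u_h,\nabla\psi)}$. By symmetry of $E''(u)-\lambda$, the main term becomes $\dual{(E''(u)-\lambda)(u-u_h),\psi}$ up to $O(\norm{u-u_h}_{L^2(\region)}^2)$ corrections. Next, insert $\psi_h$:
\begin{equation*}
\dual{(E''(u)-\lambda)(u-u_h),\psi} = \dual{(E''(u)-\lambda)(u-u_h),\psi-\psi_h} + \dual{(E''(u)-\lambda)(u-u_h),\psi_h}.
\end{equation*}
The first term is handled by the boundedness in Lemma~\ref{lem:coerc}, giving $M\norm{u-u_h}_{H^1_h(\region)}\norm{\psi-\psi_h}_{H^1_h(\region)}$. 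Because $\psi_h$ is the $H^1_h$-orthogonal projection onto $u^{\perp}\cap V_h$, $\norm{\psi-\psi_h}_{H^1_h(\region)}$ is bounded by the best approximation in $u^{\perp}\cap V_h$. That in turn is bounded by $\min_{v_h\in V_h}\norm{\psi-v_h}_{H^1_h(\region)}$, using the same rescaling/correction trick as \eqref{eq:priorH1:9} (subtract $(v_h,u)\,\Pi_h u/(\Pi_h u,u)$).

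For the second term we reuse the expansion \eqref{eq:priorH1:3} with $w_h=\psi_h$:
\begin{equation*}
\dual{(E''(u)-\lambda)(u_h-u),\psi_h} = (\lambda_h-\lambda)(u_h,\psi_h)_{L^2(\region)} - \mE_h(\psi_h,\nabla u) - \int_{\region}\beta(u-u_h)^2(2u+u_h)\psi_h\diff r.
\end{equation*}
Since $\psi_h\perp u$, we have $(u_h,\psi_h)_{L^2(\region)}=(u_h-u,\psi_h)_{L^2(\region)}$. Combined with Theorem~\ref{thm:eig}, this makes the first piece higher order. The cubic term is $\lesssim\norm{u-u_h}_{H^1_h(\region)}^2\norm{\psi_h}_{H^1_h(\region)}$ by Lemma~\ref{lem:itp}. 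We also have $\norm{\psi_h}_{H^1_h(\region)}\lesssim\norm{\psi}_{H^1(\region)}\lesssim\norm{u-u_h}_{L^2(\region)}$. The remaining piece is exactly $\abs{\mE_h(\psi_h,\nabla u)}$.

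The main obstacle is showing that these leftover higher-order pieces are genuinely dominated by the right-hand side of the statement. The pieces in question are $\abs{\lambda-\lambda_h}\,\norm{u-u_h}_{L^2(\region)}^2$, $\norm{u-u_h}_{H^1_h(\region)}^2\norm{u-u_h}_{L^2(\region)}$, and the extra $\mE_h(\cdot,\nabla\psi)$ pieces coming from the non-$H_0^1$ test function. For the first two, I would bound $\norm{u-u_h}_{H^1_h(\region)}^2\lesssim\norm{u-u_h}_{H^1_h(\region)}\cdot o(1)$ and absorb them into the left side, or else simply retain them as part of the implied constant for small $h$. A second delicate point is the extended dual identity, which needs $\psi\in H^2(\region)$ to make sense of $\mE_h(\cdot,\nabla\psi)$. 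I would justify this from convexity of $\region$, Lemma~\ref{lem:reg}, and the fact that the multiplier enforcing $\psi\perp u$ is a multiple of $u\in H^2(\region)$.
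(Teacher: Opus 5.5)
Your route is the paper's route. You use the same splitting $u-u_h=\alpha u+w$ with $\alpha=\frac12\norm{u-u_h}_{L^2(\region)}^2$ (your $w$ is the paper's $v$), the broken dual identity that produces $\mE_h(u_h,\nabla\psi)$, symmetry, insertion of $\psi_h$, and reuse of \eqref{eq:priorH1:3}. There is, however, a genuine gap in the step you flag as the main obstacle.

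\textbf{The cubic term.} You bound it by $\norm{u-u_h}_{H^1_h(\region)}^2\norm{\psi_h}_{H^1_h(\region)}$, which leaves $\norm{u-u_h}_{H^1_h(\region)}^2\norm{u-u_h}_{L^2(\region)}$. This cannot be absorbed into $\norm{u-u_h}_{L^2(\region)}^2$. Rewriting it as $o(1)\cdot\norm{u-u_h}_{H^1_h(\region)}\norm{u-u_h}_{L^2(\region)}$ does not help, because the $H^1_h$-error is generically of lower order than the $L^2$-error. For $EQ_1^{\mathrm{rot}}$ the two errors are $h$ and $h^2$, so the leftover is of order $h^4$, the same as the left-hand side, with no small constant. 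In the abstract setting this term is also not dominated by the right-hand side of the statement. "Retaining it in the implied constant" is not legitimate either, since $\lesssim$ only hides multiplicative constants on the displayed terms.

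The missing idea is to put the $L^2$-norm on the quadratic factor. Use H\"older with exponents $(3/2,6,6)$, the interpolation $\norm{f}_{L^3}^2\le\norm{f}_{L^2}\norm{f}_{L^6}$, and Lemma~\ref{lem:itp}:
\[
\abs{\int_{\region}\beta(u-u_h)^2(2u+u_h)\psi_h\diff r}\lesssim\norm{u-u_h}_{L^2(\region)}\norm{u-u_h}_{L^6(\region)}\norm{\psi_h}_{H^1_h(\region)}\lesssim\norm{u-u_h}_{H^1_h(\region)}\norm{u-u_h}_{L^2(\region)}^2 .
\]
This is $o(1)$ times the left-hand side and can be absorbed; it is exactly what the paper does in \eqref{eq:priorL2:4}--\eqref{eq:priorL2:5}. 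Your eigenvalue piece $\abs{\lambda-\lambda_h}\norm{u-u_h}_{L^2(\region)}^2$ is fine as written, since $\lambda_h\to\lambda$.

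\textbf{The symmetry correction.} The same care is needed in your symmetry step. The correction there is
\[
-\alpha\dual{(E''(u)-\lambda)\psi,u}=-2\alpha\beta(\abs{u}^2u,\psi)_{L^2(\region)},
\]
using $(A_u-\lambda)u=0$. Because $\norm{\psi}_{H^1(\region)}\lesssim\norm{u-u_h}_{L^2(\region)}$, this term is $O(\norm{u-u_h}_{L^2(\region)}^3)$. The bound $O(\norm{u-u_h}_{L^2(\region)}^2)$ you state would not be absorbable. The paper drops this term silently, so you are right to keep it, but you need the cubic bound.

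\textbf{Minor point.} In the general nonconforming setting $\Pi_h$ is not available. Use $P_{H^1,h}u$, or any $H^1_h$-approximation $z_h$ of $u$ with $(z_h,u)_{L^2(\region)}\to1$, in your correction $v_h-(v_h,u)\,z_h/(z_h,u)$ that bounds $\norm{\psi-\psi_h}_{H^1_h(\region)}$ by $\min_{v_h\in V_h}\norm{\psi-v_h}_{H^1_h(\region)}$.
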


        \begin{proof}
            Since $(u - u_h, u)_{L^2(\region)} = \frac{1}{2}\norm{u - u_h}_{L^2(\region)}^2$, we denote $\alpha = \frac{1}{2}\norm{u - u_h}_{L^2(\region)}^2$ and $v = (1-\alpha)u - u_h$. For sufficiently small $h$, it holds that
            \begin{equation*}
                \norm{v}_{L^2(\region)} \le \norm{u - u_h}_{L^2(\region)} \lesssim \norm{v}_{L^2(\region)}. 
            \end{equation*}
            By the definition of $\psi$, for any $w \in u^{\perp}\cap H_0^1(\region)$, we have 
            \begin{equation*}
                \dual{(E''(u) - \lambda)\psi, w} = (u - u_h, w)_{L^2(\region)} = (v, w)_{L^2(\region)}. 
            \end{equation*}
            Lemma~\ref{lem:coerc} ensures that for small $h$,
            \begin{equation*}
                \norm{\psi}_{H^1(\region)} \le \mu^{-1}\norm{v}_{L^2(\region)} \le \mu^{-1}\norm{u - u_h}_{L^2(\region)}. 
            \end{equation*}
            Standard elliptic regularity theory implies $\psi\in H^2(\region)$ with $\norm{\psi}_{H^2(\region)} \le \norm{v}_{L^2(\region)}$, and the equation
            \begin{equation*}
                \left(- \Delta + V + 3\beta \abs{u}^2 - \lambda\right) \psi - 2\beta (\abs{u}^2 u, \psi)_{L^2(\region)}u = v
            \end{equation*}
            holds almost everywhere. Consequently, we have 
            \begin{equation}\label{eq:priorL2:1}
                \begin{aligned}
                    \norm{u - u_h}_{L^2(\region)}^2 &\le \norm{v}_{L^2(\region)}^2 = \left( \left(- \Delta + V + 3\beta \abs{u}^2 - \lambda\right) \psi - 2\beta (\abs{u}^2 u, \psi)_{L^2(\region)}u, v \right) \\
                    &= \dual{(E''(u) - \lambda)\psi, v} - \sum_{K\in \mT_h} \int_{\partial K} v(\nabla \psi\cdot n) \diff S\\
                    &= \dual{(E''(u) - \lambda)\psi, u - u_h} - \alpha \dual{(E''(u) - \lambda)\psi, u} - \mE_h(v, \nabla \psi) \\
                    &= \dual{(E''(u) - \lambda)(u - u_h), \psi} + \mE_h(u_h, \nabla \psi) \\
                    &= \dual{(E''(u) - \lambda)(u - u_h), \psi_h} + \dual{(E''(u) - \lambda)(u - u_h), \psi - \psi_h} + \mE_h(u_h, \nabla \psi). 
                \end{aligned}
            \end{equation}
            Given $\psi\in u^{\perp}$ and following the reasoning in \cite{cances2010numerical}*{Theorem 1}, we obtain
            \begin{align*}
                &\norm{\psi_h}_{H^1_h(\region)} \le \norm{\psi}_{H^1(\region)} \le \mu^{-1}\norm{u - u_h}_{L^2(\region)}, \\
                &\norm{\psi - \psi_h}_{H^1_h(\region)} = \min_{v_h\in u^{\perp}\cap V_h}\norm{\psi - v_h}_{H^1_h(\region)} \lesssim \min_{v_h\in V_h}\norm{\psi - v_h}_{H^1_h(\region)}. 
            \end{align*}
            For the first term on the right side of \eqref{eq:priorL2:1}, \eqref{eq:priorH1:3} implies
            \begin{equation}\label{eq:priorL2:2}
                \begin{aligned}
                    \dual{(E''(u) - \lambda)(u - u_h), \psi_h} =& (\lambda_h - \lambda)(u_h, \psi_h)_{L^2(\region)} - \mE_h(\psi_h, \nabla u) \\
                    &\quad - \int_{\region}\beta (u - u_h)^2 (2u + u_h) \psi_h \diff r. 
                \end{aligned}
            \end{equation} 
            Since $\psi_h\in u^{\perp}$, we have
            \begin{equation}\label{eq:priorL2:3}
                \abs{(u_h, \psi_h)_{L^2(\region)}} = \abs{(u - u_h, \psi_h)_{L^2(\region)}} \le \norm{u - u_h}_{L^2(\region)}\norm{\psi_h}_{L^2(\region)} \le \mu^{-1}\norm{u - u_h}_{L^2(\region)}^2. 
            \end{equation}
            Furthermore, applying the H\"older inequality and interpolation estimates, for sufficiently small $h$ yields
            \begin{equation}\label{eq:priorL2:4}
                \begin{aligned}
                    \abs{\int_{\region}\beta (u - u_h)^2 (2u + u_h) \psi_h \diff r} &\le \beta\norm{u - u_h}_{L^3(\region)}^2\norm{2u + u_h}_{H^1_h(\region)}\norm{\psi_h}_{H^1_h(\region)} \\
                    &\lesssim \norm{u - u_h}_{L^2(\region)}^2\norm{u - u_h}_{L^6(\region)} \\
                    &\lesssim \norm{u - u_h}_{L^2(\region)}^2\norm{u - u_h}_{H^1_h(\region)}. 
                \end{aligned}
            \end{equation}
            where the final step utilizes Lemma~\ref{lem:itp}. Combining \eqref{eq:priorL2:3} and \eqref{eq:priorL2:4} with \eqref{eq:priorL2:2} results in
            \begin{equation}\label{eq:priorL2:5}
                \abs{\dual{(E''(u) - \lambda)(u - u_h), \psi_h}} \lesssim \left(\abs{\lambda_h - \lambda} + \norm{u - u_h}_{H^1_h(\region)}\right)\norm{u - u_h}_{L^2(\region)}^2. 
            \end{equation}
            For the second term in \eqref{eq:priorL2:1}, Lemma~\ref{lem:coerc} gives
            \begin{equation}\label{eq:priorL2:6}
                \begin{aligned}
                    \abs{\dual{(E''(u) - \lambda)(u - u_h), \psi - \psi_h}} &\le M\norm{u - u_h}_{H^1_h(\region)}\norm{\psi - \psi_h}_{H^1_h(\region)} \\
                    &\lesssim \norm{u - u_h}_{H^1_h(\region)}\min_{v_h\in V_h} \norm{\psi - v_h}_{H^1_h(\region)}. 
                \end{aligned}
            \end{equation}
            Finally, substituting \eqref{eq:priorL2:5} and \eqref{eq:priorL2:6} into \eqref{eq:priorL2:1} and appealing to Theorem~\ref{thm:asymp}, we conclude that
            \begin{equation*}
                \norm{u - u_h}_{L^2(\region)}^2 \lesssim \norm{u - u_h}_{H^1_h(\region)}\min_{v_h\in V_h} \norm{\psi - v_h}_{H^1_h(\region)} + \abs{\mE_h(u_h, \nabla \psi)} + \abs{\mE_h(\psi_h, \nabla u)}. 
            \end{equation*}
            This completes the proof.
        \end{proof}

        Having established the abstract approximation analysis for the constrained optimization problem~\eqref{eq:gp} in general nonconforming finite element spaces, we proceed in the following section to apply these results to a specific nonconforming finite element space and derive explicit convergence rates with respect to the mesh size $h$.

    \section{Enriched rotated \texorpdfstring{$Q_1$}{Q1} element and lower bound energy approximations}\label{sec:lower}

        Lin proposed the enriched rotated $Q_1$ element in \cite{lin2005superconvergence} and proved its superconvergence properties. A significant feature of this nonconforming finite element method is its ability to provide lower bound estimates for the exact eigenvalues in linear elliptic eigenvalue problems~\cites{li2008lower,hu2014lower}. In this section, we apply our abstract framework to the enriched rotated $Q_1$ element and extend the lower bound property from linear problems to the energy minimum of the nonlinear eigenvalue problem~\eqref{eq:gpe}. We consider the computational domain $\region$ to be a square or a cube, equipped with a regular rectangle or cubic mesh $\mT_h$. Hereafter, we refer to the enriched rotated $Q_1$ element as the $EQ_1^{\mathrm{rot}}$ element.

        \begin{defi}
            On the reference element $\hat{K} = [-1,1]^d$, the function space for the enriched rotated $Q_1$ element is defined as 
            \begin{equation*}
                EQ_1^{\mathrm{rot}}(\hat{K}) = P_1(\hat{K}) + \mathrm{span}\{x_i^2: 1\le i\le d\}, 
            \end{equation*}
            A function $v_h\in L^2(\region)$ is said to belong to the $EQ_1^{\mathrm{rot}}$ finite element space if it satisfies
            \begin{align*}
                &v_h|_K\in EQ_1^{\mathrm{rot}}(\hat{K}), \qquad \forall K\in \mT_h, \\
                &\int_e [v_h] \diff S := \int_e v_h \diff S = 0, \qquad \forall e\in \mF_h, e\subseteq \partial\region, \\
                &\int_e [v_h] \diff S := \int_e (v_h|_{K_1} - v_h|_{K_2}) \diff S = 0, \qquad \forall K_1\cap K_2 = e\in \mF_h. 
            \end{align*}
        \end{defi}

        We first establish the a priori error estimates for the $EQ_1^{\mathrm{rot}}$ element.

        \begin{Theorem}\label{thm:prior}
            For the $EQ_1^{\mathrm{rot}}$ element, the following a priori error estimates hold for sufficiently small mesh sizes $h$:
            \begin{align*}
                &\abs{\lambda - \lambda_h} \lesssim h^2, \qquad \abs{E(u) - E(u_h)} \lesssim h^2, \\
                &\norm{u - u_h}_{L^2(\region)} + h\norm{u - u_h}_{H^1_h(\region)} \lesssim h^2. 
            \end{align*}
        \end{Theorem}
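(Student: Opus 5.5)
We plan to obtain Theorem~\ref{thm:prior} by feeding element-specific estimates for $EQ_1^{\mathrm{rot}}$ into the abstract results of Sections~\ref{sec:asymptotics} and~\ref{sec:prior}. Two ingredients are needed. The first is the interpolation estimate $\norm{u - \Pi_h u}_{H^1_h(\region)} \lesssim h\abs{u}_{H^2(\region)}$. Here $\Pi_h$ is the canonical $EQ_1^{\mathrm{rot}}$ interpolant defined by face means and cell mean, and the estimate follows from the Bramble--Hilbert lemma on the reference cube together with $u\in H^2(\region)$ (Lemma~\ref{lem:reg}). The second is the consistency bound $\abs{\mE_h(w_h, \nabla \phi)} \lesssim h\abs{\phi}_{H^2(\region)}\norm{w_h}_{H^1_h(\region)}$ for all $w_h\in V_h + H_0^1(\region)$ and $\phi\in H^2(\region)$. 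To prove it, we use that $[w_h]$ has zero mean on each face. On each face $e$ we subtract the face mean of $\nabla\phi\cdot n$ and the face mean of $[w_h]$. Trace inequalities then give $\norm{[w_h]-\overline{[w_h]}}_{L^2(e)}\lesssim h^{1/2}\norm{\nabla_h w_h}_{L^2(K_1\cup K_2)}$ and $\norm{\nabla\phi\cdot n - \overline{\nabla\phi\cdot n}}_{L^2(e)}\lesssim h^{1/2}\abs{\phi}_{H^2(K)}$. For $H_0^1$ components the jump vanishes, so the bound holds on all of $V_h+H_0^1(\region)$. These two facts are essentially what Appendix~\ref{sec:eq1rot} records, and we take them as given.

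The $H^1$ estimate then follows from the $H^1$ theorem of Section~\ref{sec:prior}. By Theorem~\ref{thm:asymp} and the remark after it, we may fix the sign so that $\norm{u-u_h}_{H^1_h(\region)}\to 0$. Both terms on the right-hand side are $O(h)$, so $\norm{u-u_h}_{H^1_h(\region)}\lesssim h$.

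For the $L^2$ estimate we apply the $L^2$ theorem. Since $\norm{\psi}_{H^2(\region)}\lesssim \norm{u-u_h}_{L^2(\region)}$, the first term is $\lesssim h\cdot h\norm{u-u_h}_{L^2(\region)}$. The terms $\abs{\mE_h(u_h,\nabla\psi)}$ and $\abs{\mE_h(\psi_h,\nabla u)}$ are the delicate ones. The crude bound gives only $h\norm{u-u_h}_{L^2(\region)}$ for the first and $h\norm{\psi_h}_{H^1_h(\region)}\lesssim h\norm{u-u_h}_{L^2(\region)}$ for the second. We therefore refine both. Because $\mE_h(\cdot,\nabla\psi)$ and $\mE_h(\cdot,\nabla u)$ vanish on $H_0^1(\region)$, we get $\mE_h(u_h,\nabla\psi)=\mE_h(u_h-u,\nabla\psi)\lesssim h\norm{u-u_h}_{H^1_h(\region)}\abs{\psi}_{H^2(\region)}\lesssim h^2\norm{u-u_h}_{L^2(\region)}$. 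Similarly, $\mE_h(\psi_h,\nabla u)=\mE_h(\psi_h-\psi,\nabla u)\lesssim h\norm{\psi-\psi_h}_{H^1_h(\region)}\lesssim h^2\norm{u-u_h}_{L^2(\region)}$. Feeding these into the proof's identity \eqref{eq:priorL2:1} gives $\norm{u-u_h}_{L^2(\region)}^2\lesssim h^2\norm{u-u_h}_{L^2(\region)}$, hence $\norm{u-u_h}_{L^2(\region)}\lesssim h^2$. The main obstacle is exactly this step: the statement of the $L^2$ theorem is only a bound with an unnormalized right-hand side. We will therefore rerun its last lines with the refined consistency estimates, dividing by $\norm{u-u_h}_{L^2(\region)}$ at the end.

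Finally, the eigenvalue and energy bounds come from Theorems~\ref{thm:eig} and~\ref{thm:energy}. There $\norm{u-u_h}_{H^1_h(\region)}^2\lesssim h^2$ and $\norm{u-u_h}_{L^2(\region)}\lesssim h^2$, and the consistency term satisfies $\abs{\mE_h(u_h,\nabla u)}=\abs{\mE_h(u_h-u,\nabla u)}\lesssim h\norm{u-u_h}_{H^1_h(\region)}\lesssim h^2$. This yields $\abs{\lambda-\lambda_h}+\abs{E(u)-E(u_h)}\lesssim h^2$.
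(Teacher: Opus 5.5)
Your proposal is correct and follows essentially the same route as the paper: the $H^1_h$ bound from the abstract estimate of Section~\ref{sec:prior}, then the refined consistency bounds obtained by writing $\mE_h(u_h,\nabla\psi)=\mE_h(u_h-u,\nabla\psi)$ and $\mE_h(\psi_h,\nabla u)=\mE_h(\psi_h-\psi,\nabla u)$ to get $\norm{u-u_h}_{L^2(\region)}^2\lesssim h^2\norm{u-u_h}_{L^2(\region)}$, and finally Theorems~\ref{thm:eig} and~\ref{thm:energy}. The only superfluous piece is your plan to rerun the end of the $L^2$ proof: the consistency terms appear in absolute value in the statement of the $L^2$ theorem, so the refined bounds can be substituted there directly, which is what the paper does.
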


        \begin{proof}
            First, by invoking the approximation properties and consistency error estimates, we have
            \begin{equation*}
                \norm{u - u_h}_{H^1_h(\region)} \lesssim \inf_{v_h\in V_h} \norm{u - v_h}_{H^1_h(\region)} + \sup_{\substack{w_h \in V_h \\ \norm{w_h}_{H^1_h(\region)}\le 1}} \abs{\mE_h\left(w_h, \nabla u\right)} \lesssim h. 
            \end{equation*}
            Next, considering the consistency errors that appear in the estimates for the eigenvalues, energy, and $L^2$-norm, we apply the obtained $H^1_h$-error bounds to find
            \begin{align*}
                &\abs{\mE_h(u_h, \nabla u)} = \abs{\mE_h(u- u_h, \nabla u)} \lesssim h\norm{u}_{H^2(\region)}\norm{u - u_h}_{H^1_h(\region)} \lesssim h^2, \\
                &\abs{\mE_h(u_h, \nabla \psi)} = \abs{\mE_h(u- u_h, \nabla \psi)} \lesssim h\norm{\psi}_{H^2(\region)}\norm{u - u_h}_{H^1_h(\region)} \lesssim h^2\norm{u - u_h}_{L^2(\region)}, \\
                &\abs{\mE_h(\psi_h, \nabla u)} = \abs{\mE_h(\psi - \psi_h, \nabla u)} \lesssim h\norm{u}_{H^2(\region)}\norm{\psi - \psi_h}_{H^1_h(\region)} \lesssim h^2\norm{\psi}_{H^2(\region)} \lesssim h^2\norm{u - u_h}_{L^2(\region)}. 
            \end{align*}
            Thus, for the $L^2$-norm error, it holds that
            \begin{equation*}
                \begin{aligned}
                    \norm{u - u_h}_{L^2(\region)}^2 &\lesssim \norm{u - u_h}_{H^1_h(\region)}\min_{v_h\in V_h} \norm{\psi - v_h}_{H^1_h(\region)} + \abs{\mE_h(u_h, \nabla \psi)} + \abs{\mE_h(\psi_h, \nabla u)} \\
                    &\lesssim h^2\left( \norm{\psi}_{H^2(\region)} + \norm{u - u_h}_{L^2(\region)} \right) \lesssim h^2\norm{u - u_h}_{L^2(\region)}, 
                \end{aligned}
            \end{equation*}
            which implies $\norm{u - u_h}_{L^2(\region)} \lesssim h^2$. Finally, combining the $H^1_h$, $L^2$, and consistency error estimates, we obtain
            \begin{align*}
                &\abs{\lambda - \lambda_h} \lesssim \norm{u-u_h}_{H^1_h(\region)}^2 + \norm{u-u_h}_{L^2(\region)} + \abs{\mE_h(u_h, \nabla u)} \lesssim h^2, \\
                &\abs{E(u) - E(u_h)} \lesssim \norm{u - u_h}_{H^1_h(\region)}^2 + \abs{\mE_h(u_h, \nabla u)} \lesssim h^2. 
            \end{align*}
            The proof is complete. 
        \end{proof}

        Having established the a priori estimates, we now address the lower bound energy approximation property of the $EQ_1^{\mathrm{rot}}$ element. It is crucial to emphasize that verifying the saturation condition is an essential and key step in proving this lower bound property. This conclusion has been previously proven by Jun Hu, Yunqing Huang and Qun Lin~\cite{hu2014lower} in the context of linear eigenvalue problems. Motivated by this, we first provide the saturation condition for the $EQ_1^{\mathrm{rot}}$ finite element space.
        
        \begin{lemma}\label{lem:saturation}
            When $d = 2$, the ground state $u$ and the space $V_h$ satisfies 
            \begin{align}
                \label{eq:saturation:1} &\norm{\frac{\partial^2 u}{\partial x \partial y}}_{L^2(\region)} \neq 0, \\
                \label{eq:saturation:4} &\norm{\frac{\partial^2 v_h}{\partial x \partial y}}_{L^2(\region)} = 0, \qquad \forall v_h \in V_h. 
            \end{align}
            When $d = 3$, the ground state $u$ and the space $V_h$ satisfies 
            \begin{align}
                \label{eq:saturation:2} &\norm{\frac{\partial^2 u}{\partial x \partial y}}_{L^2(\region)} + \norm{\frac{\partial^2 u}{\partial y \partial z}}_{L^2(\region)} + \norm{\frac{\partial^2 u}{\partial z \partial x}}_{L^2(\region)} \neq 0, \\
                \label{eq:saturation:5} &\norm{\frac{\partial^2 v_h}{\partial x \partial y}}_{L^2(\region)} = \norm{\frac{\partial^2 v_h}{\partial y \partial z}}_{L^2(\region)} = \norm{\frac{\partial^2 v_h}{\partial z \partial x}}_{L^2(\region)} = 0, \qquad \forall v_h \in V_h. 
            \end{align}
            Further, there holds the saturation condition 
            \begin{equation}\label{eq:saturation:3}
                h \lesssim \norm{\nabla_h (u - u_h)}_{L^2(\region)}. 
            \end{equation}
        \end{lemma}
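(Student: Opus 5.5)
The lemma has three parts: the identities \eqref{eq:saturation:4} and \eqref{eq:saturation:5} for the discrete space, the nonvanishing statements \eqref{eq:saturation:1} and \eqref{eq:saturation:2} for $u$, and the saturation bound \eqref{eq:saturation:3}. I would prove them in that order.

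\emph{Step 1: the discrete identities.} On each $K$, $v_h|_K$ is an affine image of a function in $P_1(\hat K)+\mathrm{span}\{x_i^2\}$. Every such function is a sum of one-variable functions, so all mixed second derivatives vanish elementwise. Since the mesh is rectangular or cubic, the affine map to $\hat K$ is a diagonal scaling plus translation, which preserves this separable structure. Hence $\partial^2_{x_ix_j}(v_h|_K)=0$ for $i\ne j$. Interpreting the norms as broken norms, which is the only meaningful reading since $v_h\notin H^2$, this gives \eqref{eq:saturation:4} and \eqref{eq:saturation:5}.

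\emph{Step 2: the mixed derivatives of $u$.} I argue by contradiction. Suppose all mixed derivatives of $u\in H^2(\region)$ (Lemma~\ref{lem:reg}) vanish. Then $\partial_x u$ is independent of $y$ (and of $z$), and similarly for the other partial derivatives, so $u(r)=\sum_i f_i(x_i)$ with $f_i\in H^2$ of an interval. The homogeneous Dirichlet condition on the face $x_1=a$ gives $f_1(a)+\sum_{i\ge2}f_i(x_i)=0$ for all values of the other variables. So each $f_i$, $i\ge2$, is constant, and by symmetry every $f_i$ is constant. Hence $u$ is constant, and the boundary condition forces $u\equiv0$. This contradicts $\norm{u}_{L^2(\region)}=1$.

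\emph{Step 3: the saturation bound.} Set $w=u_h$ on each element and write $\partial_{xy}$ for a chosen mixed derivative with $\norm{\partial_{xy}u}_{L^2(\region)}=:c_0>0$, as guaranteed by Step 2. On each $K$, let $\Pi_K$ be the $L^2(K)$ projection of $\nabla u$ onto the space $\nabla(EQ_1^{\mathrm{rot}}(K))$. I plan to show by a scaling argument that
\begin{equation*}
\norm{\nabla u-\nabla(u_h|_K)}_{L^2(K)}\ge \norm{\nabla u-\Pi_K\nabla u}_{L^2(K)}\ge c\,h_K\norm{\partial_{xy}u-\overline{\partial_{xy}u}^K}_{L^2(K)}' \,,
\end{equation*}
where the first inequality holds by best approximation. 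For the second, note that elements of $\nabla(EQ_1^{\mathrm{rot}})$ have components $a_i+b_ix_i$. The component $\partial_x u$ minus such a function has a nonzero $y$-derivative, namely $\partial_{xy}u$. A Poincaré-type inequality in the $y$ direction on $K$ (of length $\sim h$) then yields the cleaner bound
\begin{equation*}
\norm{\partial_x u-(a+bx)}_{L^2(K)}\ge c\,h\norm{\partial_{xy}u}_{H^{-1}_y}.
\end{equation*}
To avoid negative norms, I will instead follow Hu--Huang--Lin: approximate $\partial_{xy}u$ by its elementwise mean, which costs $o(1)$ by density. Test $\partial_x u-(a+bx)$ against the function $y-y_K$, which is $L^2(K)$-orthogonal to every $a+bx$. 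This gives
\begin{equation*}
\int_K(\partial_xu)(y-y_K)\,\diff r\approx \overline{\partial_{xy}u}^K\int_K(y-y_K)^2\,\diff r\sim h^2|K|\,\overline{\partial_{xy}u}^K.
\end{equation*}
Applying Cauchy--Schwarz with $\norm{y-y_K}_{L^2(K)}\sim h|K|^{1/2}$ yields $\norm{\partial_xu-(a+bx)}_{L^2(K)}\gtrsim h\,|K|^{1/2}\,|\overline{\partial_{xy}u}^K|$.

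Squaring and summing over $K$ gives $\norm{\nabla_h(u-u_h)}^2\gtrsim h^2\norm{\Pi_0\partial_{xy}u}^2_{L^2(\region)}$, where $\Pi_0$ is the elementwise mean projection. Since $\Pi_0\partial_{xy}u\to\partial_{xy}u$ in $L^2$, the right side is at least $h^2c_0^2/2$ for small $h$, proving \eqref{eq:saturation:3}.

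The main obstacle is the approximation step in Step 3. One must control the error in replacing $\partial_x u$ by its linearization consistently with the orthogonality against $y-y_K$. In particular, the exact identity
\begin{equation*}
\int_K(\partial_xu)(y-y_K)\,\diff r=\int_K\partial_{xy}u\,\phi_K\,\diff r
\end{equation*}
must be established via integration by parts in $y$, with $\phi_K\ge0$ a quadratic bubble in $y$. Only $H^2$ regularity is available, so the argument must rely on density rather than higher smoothness.
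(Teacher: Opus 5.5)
Your Steps 1 and 2 follow the paper's argument: elementwise separability of $EQ_1^{\mathrm{rot}}$ functions on axis-aligned boxes, and the contradiction $u=\sum_i f_i(x_i)\equiv 0$ forced by the Dirichlet condition. You also spell out $d=3$, which the paper only asserts. The genuine difference is \eqref{eq:saturation:3}. The paper gives no argument there and simply invokes Theorem 8.1 of \cite{hu2014lower}, while you prove it directly. Your mechanism is correct. The $x$-component of $\nabla(v_h|_K)$ depends on $x$ alone, so it is $L^2(K)$-orthogonal to $y-y_K$. With the bubble $\phi_K(y)=\tfrac12(y-y_0)(y_1-y)\ge 0$ one has $\phi_K'=-(y-y_K)$ and $\int_K\phi_K\diff r=\int_K(y-y_K)^2\diff r$. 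For $u\in H^2(K)$ this gives the exact identity $\int_K\partial_x u\,(y-y_K)\diff r=\int_K\partial_{xy}u\,\phi_K\diff r$, since $\phi_K$ vanishes at $y_0,y_1$. What your route buys is transparency. It shows that \eqref{eq:saturation:3} is really the best-approximation bound $\inf_{v_h\in V_h}\norm{\nabla_h(u-v_h)}_{L^2(\region)}\gtrsim h$. That bound uses only $u\in H^2(\region)$, the structural facts \eqref{eq:saturation:1}--\eqref{eq:saturation:5}, axis-aligned elements and quasi-uniformity ($h_K\sim h$; you use this implicitly, and the paper's ``regular mesh'' tacitly assumes it). It never uses that $u_h$ minimizes anything. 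So it applies to the nonlinear problem without having to check that a theorem formulated for linear eigenvalue problems covers the present $u_h$, a step the paper leaves implicit. The citation buys only brevity.

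When you write it out, delete your first display: the oscillation $\partial_{xy}u-\overline{\partial_{xy}u}^K$ appears there where the mean should be. Also keep the remainder term before summing. Cauchy--Schwarz against $y-y_K$ gives, for every $v_h\in V_h$ and every $K$,
\[
\norm{\partial_x(u-v_h)}_{L^2(K)}\ge c\,h\norm{\Pi_0\partial_{xy}u}_{L^2(K)}-C\,h\norm{\partial_{xy}u-\Pi_0\partial_{xy}u}_{L^2(K)}.
\]
Using $(A-B)_+^2\ge\tfrac12A^2-B^2$ and summing over $K$ then gives
\[
\norm{\nabla_h(u-v_h)}_{L^2(\region)}^2\ge h^2\Bigl(\tfrac{c^2}{2}\norm{\Pi_0\partial_{xy}u}_{L^2(\region)}^2-C^2\norm{\partial_{xy}u-\Pi_0\partial_{xy}u}_{L^2(\region)}^2\Bigr).
\]
The bracket tends to $\tfrac{c^2}{2}\norm{\partial_{xy}u}_{L^2(\region)}^2>0$ as $h\to0$. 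This is the precise form of your ``squaring and summing'' step; as you wrote it, the bound $\norm{\nabla_h(u-u_h)}_{L^2(\region)}^2\gtrsim h^2\norm{\Pi_0\partial_{xy}u}_{L^2(\region)}^2$ drops this remainder.
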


        \begin{proof}
            For simplicity, we only prove the case where $d=2$, and we proceed by contradiction. Assume that $\region$ is a square $[x_{\min}, x_{\max}] \times [y_{\min}, y_{\max}]$, and $\norm{\frac{\partial^2 u}{\partial x \partial y}}_{L^2(\region)} = 0$, then there exist functions $f(x) \in H^1([x_{\min}, x_{\max}])$ and $g(y) \in H^1([y_{\min}, y_{\max}])$ such that 
            \begin{equation*}
                u(x,y) = f(x) + g(y). 
            \end{equation*}
            Noting that $u \in H_0^1(\region)$. In line $\{x_{\min}\} \times [y_{\min}, y_{\max}]$, we have 
            \begin{equation*}
                0 = u(x, y) = u(x_{\min}, y) = f(x_{\min}) + g(y), 
            \end{equation*}
            thus $g(y) = - f(x_{\min})$ is a constant. Similarly, $f(x)$ is a constant, therefore $u \equiv 0$ in $\region$, which contradicts the fact that $\norm{u}_{L^2(\region)} = 1$. So~\eqref{eq:saturation:1} is true. \eqref{eq:saturation:4} can be directly obtained according to the definition of $EQ_1^{\mathrm{rot}}$ space. Finally, According to~\cite{hu2014lower}*{Theorem 8.1}, the saturation condition~\eqref{eq:saturation:3} holds. 
        \end{proof}
        
        For following analysis, we specifically require the trapping potential $V \in H^1(\region)$. We define the interpolation operator~\cite{hu2014lower} $\Pi_h: H_0^1(\region) \to V_h$ associated with the $EQ_1^{\mathrm{rot}}$ element, which satisfies
        \begin{align*}
            &\int_e \Pi_h v \diff S = \int_e v \diff S, \qquad \forall v\in H_0^1(\region), e\in \mF_h, \\
            &\int_K \Pi_h v \diff r = \int_K v \diff r, \qquad \forall v\in H_0^1(\region), K\in \mT_h. 
        \end{align*}
        According to \cite{hu2014lower}*{Lemma 5.5}, this interpolation operator possesses the following orthogonality property
        \begin{equation}\label{eq:orth}
            \left( \nabla_h (u - \Pi_h u), \nabla_h v_h \right)_{L^2(\region)} = 0, \qquad \forall v_h\in V_h. 
        \end{equation}
        Furthermore, the approximation property of the interpolation operator \cite{li2008lower}*{Lemma 1} ensures
        \begin{equation}\label{eq:itp1}
            \norm{u - \Pi_h u}_{L^2(\region)} + h\norm{u - \Pi_h u}_{H^1_h(\region)} \lesssim h^2. 
        \end{equation}
        Let $\Pi_0$ denote the piecewise constant projection operator onto $EQ_1^{\mathrm{rot}}$, defined such that for any $K\in \mT_h$ and $v\in H_0^1(\region)$, $\Pi_0 v|_{K} = \abs{K}^{-1}\int_K v \diff r$. Applying the Poincar\'e inequality piecewise yields
        \begin{equation}\label{eq:itp0}
            \norm{v - \Pi_0 v}_{L^2(\region)} \lesssim h \norm{\nabla_h v}_{L^2(\region)}, \qquad \forall v\in V_h + H_0^1(\region). 
        \end{equation}
        With these preliminaries, we present the lower bound property of the $EQ_1^{\mathrm{rot}}$ element for the energy minimum in Theorem~\ref{thm:lowerEnergy}.

        \begin{Theorem}\label{thm:lowerEnergy}
            Let $E(u)$ and $E(u_h)$ denote the exact and discrete ground state energies, respectively. For a sufficiently small mesh size $h$, it holds that
            \begin{equation*}
                E(u_h) \le E(u). 
            \end{equation*}
        \end{Theorem}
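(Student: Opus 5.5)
Since $u_h$ minimizes $E$ over $\mM_h$, the strategy is to exhibit a competitor in $\mM_h$ whose energy is at most $E(u)$. The natural choice is the normalized interpolant
\[
w_h := \Pi_h u/\norm{\Pi_h u}_{L^2(\region)} \in \mM_h,
\]
so that $E(u_h)\le E(w_h)$, and it suffices to prove $E(w_h) \le E(u)$ for small $h$. Following Hu--Huang--Lin and Li, the gap should come from the leading term $\norm{\nabla_h(u-\Pi_h u)}_{L^2(\region)}^2$, which is of exact order $h^2$. All other contributions must be shown to be $o(h^2)$.

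\textbf{Step 1 (kinetic part).} By the orthogonality \eqref{eq:orth} with $v_h=\Pi_h u$,
\[
\norm{\nabla u}_{L^2(\region)}^2 = \norm{\nabla_h \Pi_h u}_{L^2(\region)}^2 + \norm{\nabla_h(u-\Pi_h u)}_{L^2(\region)}^2 .
\]
Hence $\abs{\Pi_h u}_{H^1_h(\region)}^2 = \abs{u}_{H^1(\region)}^2 - \norm{\nabla_h(u-\Pi_h u)}_{L^2(\region)}^2$.

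\textbf{Step 2 (saturation of the interpolation error).} I need $\norm{\nabla_h(u-\Pi_h u)}_{L^2(\region)} \gtrsim h$. I would derive this from \eqref{eq:saturation:1} and \eqref{eq:saturation:4} in Lemma~\ref{lem:saturation}, via the same argument as \cite{hu2014lower}: the mixed derivative $\partial_{xy}u$ cannot be captured by $V_h$, giving $\inf_{v_h}\norm{\nabla_h(u-v_h)}\gtrsim h$. By orthogonality, $\Pi_h u$ realizes the broken-$H^1$ seminorm best approximation, so this lower bound applies to it.

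\textbf{Step 3 (lower-order terms).} Write
\[
E(\Pi_h u) - E(u) = -\tfrac12\norm{\nabla_h(u-\Pi_h u)}^2 + \tfrac12\int V(\abs{\Pi_h u}^2-\abs{u}^2) + \tfrac{\beta}{4}\int(\abs{\Pi_h u}^4-\abs{u}^4).
\]
The goal is to show the last two terms, together with the normalization effect, are $o(h^2)$; they should in fact be $O(h^3)$. Expanding $\abs{\Pi_h u}^2-\abs{u}^2 = 2u(\Pi_h u-u) + (\Pi_h u-u)^2$, the quadratic piece is $O(h^4)$ by \eqref{eq:itp1}. For the linear piece $\int Vu(\Pi_h u-u)$, I use that $\Pi_h u-u$ has zero mean on each $K$, so
\[
\int Vu(\Pi_h u-u) = \int (Vu-\Pi_0(Vu))(\Pi_h u-u) \lesssim h\norm{\nabla(Vu)}\,h^2 .
\]
This is where $V\in H^1(\region)$ and \eqref{eq:itp0} enter; $u\in H^2\subset L^\infty$ is needed to put $Vu\in H^1$. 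The same trick handles $\int u^3(\Pi_h u-u)$. Normalization uses $\norm{\Pi_h u}_{L^2}^2 = 1 + 2(u,\Pi_h u-u)+\norm{\Pi_h u-u}^2 = 1+O(h^4)$, since $(u,\Pi_h u - u) = (u-\Pi_0 u,\Pi_h u-u)=O(h^3)$. Dividing by this factor perturbs each energy term by $O(h^3)$. Thus
\[
E(w_h) \le E(u) - c h^2 + Ch^3 < E(u)
\]
for small $h$.

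\textbf{Main obstacle.} The delicate point is Step 2. Lemma~\ref{lem:saturation} is stated for $u-u_h$, not for $u-\Pi_h u$, so I must either re-derive the saturation for the interpolant or transfer it. One way to transfer it is to show $\norm{\nabla_h(\Pi_h u - u_h)}=o(h)$, which fits the superconvergence theory of the $EQ_1^{\mathrm{rot}}$ element. Failing that, I would invoke the lower bound on $\inf_{v_h}\norm{\nabla_h(u-v_h)}$ from \cite{hu2014lower}*{Theorem 8.1}, combined with the orthogonality \eqref{eq:orth}.
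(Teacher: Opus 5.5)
Your proof is correct, and it takes a genuinely different route from the paper. The paper uses no competitor. It expands the gap exactly around the computed state,
\[
E(u)-E(u_h)=\tfrac12\dual{A_{u_h}(u-u_h),u-u_h}+\dual{A_{u_h}(u-u_h),u_h}+\tfrac{\beta}{4}\int_{\region}\left(\abs{u}^2-\abs{u_h}^2\right)^2\diff r .
\]
It bounds the first term below by the saturation \eqref{eq:saturation:3} for $u-u_h$. The cross term is handled by the orthogonality \eqref{eq:orth} together with the discrete eigenvalue equation \eqref{eq:gpeh}. All remainders are controlled with the a priori rates of Theorem~\ref{thm:prior}, which ends with $C_1h^2-C_2h^{5/2}$.

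Your argument uses only that $u_h$ minimizes $E$ over $\mM_h$, plus interpolation properties of $\Pi_h u$. You need no convergence of $u_h$, no sign normalization, no Theorem~\ref{thm:prior}, and no eigenvalue equation. Your remainder is $O(h^3)$ rather than $O(h^{5/2})$. You still obtain $E(u)-E(u_h)\ge ch^2-Ch^3$, which is all Corollary~\ref{coro:monotonicity} uses. In exchange, the paper's route describes the computed solution itself: $E(u)-E(u_h)=\tfrac12\norm{\nabla_h(u-u_h)}_{L^2(\region)}^2+O(h^{5/2})$, mirroring the linear identity of Hu--Huang--Lin. It also uses $u_h$ only through the discrete Euler--Lagrange equation and its convergence rates, not through global minimality.

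Your ``main obstacle'' is not really one. The argument behind \eqref{eq:saturation:3} uses only \eqref{eq:saturation:1} and \eqref{eq:saturation:4}, which are properties of $u$ and of $V_h$. So it actually gives $\inf_{v_h\in V_h}\norm{\nabla_h(u-v_h)}_{L^2(\region)}\gtrsim h$. This applies to $\Pi_h u$ simply because $\Pi_h u\in V_h$; orthogonality is not needed for this step, and no superconvergence transfer is required.

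Two small points:
\begin{itemize}
\item Your own estimate $(u,\Pi_h u-u)=O(h^3)$ gives $\norm{\Pi_h u}_{L^2(\region)}^2=1+O(h^3)$, not $1+O(h^4)$. This is harmless.
\item In $\abs{\Pi_h u}^4-\abs{u}^4$, the terms cubic and quartic in $e=\Pi_h u-u$ need bounds on $\norm{e}_{L^3(\region)}$ and $\norm{e}_{L^4(\region)}$. These follow from \eqref{eq:itp1}, Lemma~\ref{lem:itp} and interpolation, as in \eqref{eq:lowerEnergy:15}.
\end{itemize}
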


        \begin{proof}
            By the definitions of $u$ and $u_h$, we have
            \begin{equation}\label{eq:lowerEnergy:1}
                \begin{aligned}
                    E(u) - E(u_h) &= \frac{1}{2}\dual{A_u u, u} - \frac{1}{2}\dual{A_{u_h} u_h, u_h} - \frac{1}{4}\int_{\region} \beta \left( \abs{u}^4 - \abs{u_h}^4 \right) \diff r\\
                    &= \frac{1}{2}\dual{A_{u_h} u, u} - \frac{1}{2}\dual{A_{u_h} u_h, u_h} + \frac{1}{4}\int_{\region} \beta \left( \abs{u}^2 - \abs{u_h}^2 \right)^2 \diff r\\
                    &= \frac{1}{2}\dual{A_{u_h} (u - u_h), u - u_h} + \dual{A_{u_h}(u - u_h), u_h} + \frac{1}{4}\int_{\region} \beta \left( \abs{u}^2 - \abs{u_h}^2 \right)^2 \diff r. 
                \end{aligned}
            \end{equation}
            Regarding the first term in \eqref{eq:lowerEnergy:1}, according to \eqref{eq:saturation:3} in Lemma~\ref{lem:saturation}, there exists a constant $C_1 > 0$ such that 
            \begin{equation}\label{eq:lowerEnergy:11}
                \frac{1}{2}\dual{A_{u_h} (u - u_h), u - u_h} \ge \frac{1}{2}\norm{\nabla_h (u - u_h)}_{L^2(\region)}^2 \ge C_1 h^2. 
            \end{equation}
            For the second term in \eqref{eq:lowerEnergy:1}, we can write
            \begin{equation}\label{eq:lowerEnergy:2}
                \begin{aligned}
                    \dual{A_{u_h}(u - u_h), u_h} =& \dual{A_{u_h}u, u_h} - \dual{A_{u_h} u_h, u_h} \\
                    =& \dual{A_{u_h}\left( u - \Pi_h u \right), u_h} + \dual{A_{u_h} u_h, \left( \Pi_h u - u_h \right)} \\
                    =& \dual{A_{u_h}\left( u - \Pi_h u \right), u_h} + \lambda_h \left( \left(u_h, \Pi_h u\right) - 1 \right) \\
                    =& \dual{A_{u_h}\left( u - \Pi_h u \right), u_h} + \frac{\lambda_h}{2} \left( \norm{\Pi_h u}_{L^2(\region)}^2 - \norm{u}_{L^2(\region)}^2 \right) \\
                    &\quad - \frac{\lambda_h}{2} \norm{u_h - \Pi_h u}_{L^2(\region)}^2. 
                \end{aligned}
            \end{equation}
            Expanding the first term of \eqref{eq:lowerEnergy:2}, we have
            \begin{equation}\label{eq:lowerEnergy:3}
                \begin{aligned}
                    \dual{A_{u_h}\left( u - \Pi_h u \right), u_h} =& \left(\left(V + \beta \abs{u_h}^2\right)\left( u - \Pi_h u \right), u_h\right)_{L^2(\region)} \\
                    =& \left(\beta \left(\abs{u_h}^2 - \abs{u}^2\right)\left( u - \Pi_h u \right), u_h\right)_{L^2(\region)}\\
                    &\quad + \left(\left(V + \beta \abs{u}^2 - \Pi_0 \left(V + \beta \abs{u}^2\right)\right)\left( u - \Pi_h u \right), u_h\right)_{L^2(\region)} \\
                    &\quad + \left(\Pi_0 \left(V + \beta \abs{u}^2\right)\left( u - \Pi_h u \right), u_h\right)_{L^2(\region)}. 
                \end{aligned}
            \end{equation}
            For the first term on the right-hand side of \eqref{eq:lowerEnergy:3}, we obtain
            \begin{equation}\label{eq:lowerEnergy:13}
                \begin{aligned}
                    &\abs{\left(\beta \left(\abs{u_h}^2 - \abs{u}^2\right)\left( u - \Pi_h u \right), u_h\right)_{L^2(\region)}} \\
                    =& \abs{\int_{\region} \beta \left( u_h - u \right)\left( u_h + u \right)\left( u - \Pi_h u \right)u_h \diff r} \\
                    \le& \beta \norm{u_h - u}_{L^3(\region)}\norm{u_h + u}_{L^6(\region)}\norm{u - \Pi_h u}_{L^3(\region)}\norm{u_h}_{L^6(\region)}. 
                \end{aligned}
            \end{equation}
            Based on the interpolation inequality and the a priori estimation Theorem~\ref{thm:prior}, we have
            \begin{equation}\label{eq:lowerEnergy:14}
                \norm{u_h - u}_{L^3(\region)} \lesssim \norm{u_h - u}_{L^2(\region)}^{\frac{1}{2}}\norm{u_h - u}_{L^6(\region)}^{\frac{1}{2}} \lesssim \norm{u_h - u}_{L^2(\region)}^{\frac{1}{2}}\norm{u_h - u}_{H^1_h(\region)}^{\frac{1}{2}} \lesssim h^{\frac{3}{2}}. 
            \end{equation}
            Similarly, based on the interpolation inequality and the properties of the interpolation operator $\Pi_h$ in \eqref{eq:itp1}, we have
            \begin{equation}\label{eq:lowerEnergy:15}
                \norm{u - \Pi_h u}_{L^3(\region)} \lesssim h^{\frac{3}{2}}. 
            \end{equation}
            Substituting \eqref{eq:lowerEnergy:14} and \eqref{eq:lowerEnergy:15} into \eqref{eq:lowerEnergy:13}, we obtain the estimate for the first term on the right-hand side of \eqref{eq:lowerEnergy:3}:
            \begin{equation}\label{eq:lowerEnergy:16}
                \abs{\left(\beta \left(\abs{u_h}^2 - \abs{u}^2\right)\left( u - \Pi_h u \right), u_h\right)_{L^2(\region)}} \lesssim h^3. 
            \end{equation}
            For the second term on the right-hand side of \eqref{eq:lowerEnergy:3}, we have
            \begin{equation}\label{eq:lowerEnergy:17}
                \begin{aligned}
                    &\abs{\left(\left(V + \beta \abs{u}^2 - \Pi_0 \left(V + \beta \abs{u}^2\right)\right)\left( u - \Pi_h u \right), u_h\right)_{L^2(\region)}} \\
                    =& \abs{\int_{\region} \left(V + \beta \abs{u}^2 - \Pi_0 \left(V + \beta \abs{u}^2\right)\right)\left( u - \Pi_h u \right)u_h \diff r} \\
                    \le& \norm{V + \beta \abs{u}^2 - \Pi_0 \left(V + \beta \abs{u}^2\right)}_{L^2(\region)}\norm{u - \Pi_h u}_{L^3(\region)}\norm{u_h}_{L^6(\region)} 
                \end{aligned}
            \end{equation}
            Noting that 
            \begin{equation*}
                \norm{\nabla \left(V + \beta \abs{u}^2\right)}_{L^2(\region)} \le \norm{V}_{H^1(\region)} + \beta\norm{u}_{L^{\infty}(\region)}\norm{u}_{H^1(\region)}
            \end{equation*}
            is bounded, and according to \eqref{eq:itp0}, we have
            \begin{equation}\label{eq:lowerEnergy:18}
                \norm{V + \beta \abs{u}^2 - \Pi_0 \left(V + \beta \abs{u}^2\right)}_{L^2(\region)} \lesssim h\norm{\nabla \left(V + \beta \abs{u}^2\right)}_{L^2(\region)} \lesssim h. 
            \end{equation}
            Substituting \eqref{eq:lowerEnergy:15} and \eqref{eq:lowerEnergy:18} into \eqref{eq:lowerEnergy:17}, we obtain the estimate for the second term on the right-hand side of \eqref{eq:lowerEnergy:3}:
            \begin{equation}\label{eq:lowerEnergy:4}
                \abs{\left(\left(V + \beta \abs{u}^2 - \Pi_0 \left(V + \beta \abs{u}^2\right)\right)\left( u - \Pi_h u \right), u_h\right)_{L^2(\region)}} \lesssim h^{\frac{5}{2}}. 
            \end{equation}
            For the third term on the right-hand side of \eqref{eq:lowerEnergy:3}, we note that for any $K\in \mT_h$, $\int_{K} \left(u - \Pi_h u\right) \diff r = 0$, hence
            \begin{equation}\label{eq:lowerEnergy:5}
                \begin{aligned}
                    &\abs{\left(\Pi_0 \left(V + \beta \abs{u}^2\right)\left( u - \Pi_h u \right), u_h\right)_{L^2(\region)}} \\
                    =& \abs{\left(\Pi_0 \left(V + \beta \abs{u}^2\right)\left( u - \Pi_h u \right), u_h - \Pi_0 u_h\right)_{L^2(\region)}} \\
                    \le& \norm{\Pi_0 \left(V + \beta \abs{u}^2\right)}_{L^{\infty}(\region)}\norm{\left( u - \Pi_h u \right)}_{L^2(\region)} \norm{u_h - \Pi_0 u_h}_{L^2(\region)} \\
                    \lesssim& \norm{V + \beta \abs{u}^2}_{L^{\infty}(\region)}\left( h^2\norm{u}_{H^2(\region)} \right)\left( h\norm{\nabla_h u_h}_{L^2(\region)} \right) \\
                    \le& h^3 \left(\norm{V}_{L^{\infty}(\region)} + \beta\norm{u}_{L^{\infty}(\region)}^2\right)\norm{u}_{H^2(\region)}\norm{u_h}_{H^1_h(\region)} \lesssim h^3. 
                \end{aligned}
            \end{equation}
            Substituting \eqref{eq:lowerEnergy:16}, \eqref{eq:lowerEnergy:4}, and \eqref{eq:lowerEnergy:5} into \eqref{eq:lowerEnergy:3}, we obtain the estimate for the first term on the right-hand side of \eqref{eq:lowerEnergy:2}:
            \begin{equation}\label{eq:lowerEnergy:6}
                \abs{\dual{A_{u_h}\left( u - \Pi_h u \right), u_h}} \lesssim h^{\frac{5}{2}} + h^3. 
            \end{equation}
            Regarding the second term on the right-hand side of \eqref{eq:lowerEnergy:2}, we have
            \begin{equation}\label{eq:lowerEnergy:7}
                \begin{aligned}
                    \abs{\norm{\Pi_h u}_{L^2(\region)}^2 - \norm{u}_{L^2(\region)}^2} &= \abs{\left( \Pi_h u - u, \Pi_h u + u \right)_{L^2(\region)}} \\
                    &= \abs{\left( \Pi_h u - u, \Pi_h u + u - \Pi_0\left( \Pi_h u + u \right)\right)_{L^2(\region)}} \\
                    &\le \norm{\Pi_h u - u}_{L^2(\region)}\norm{\Pi_h u + u - \Pi_0\left( \Pi_h u + u \right)}_{L^2(\region)} \\
                    &\lesssim \left( h^2 \norm{u}_{H^2(\region)} \right) \left( h\norm{\nabla_h \left(\Pi_h u + u\right)}_{L^2(\region)} \right) \\
                    &\le 2 h^3 \norm{u}_{H^2(\region)} \norm{u}_{H^1(\region)} \lesssim h^3. 
                \end{aligned}
            \end{equation}
            For the third term on the right-hand side of \eqref{eq:lowerEnergy:2}, we have
            \begin{equation}\label{eq:lowerEnergy:8}
                \norm{u_h - \Pi_h u}_{L^2(\region)}^2 \le 2 \left( \norm{u - u_h}_{L^2(\region)}^2 + \norm{u - \Pi_h u}_{L^2(\region)}^2 \right) \lesssim h^4. 
            \end{equation}
            Substituting \eqref{eq:lowerEnergy:6}--\eqref{eq:lowerEnergy:8} into \eqref{eq:lowerEnergy:2}, we obtain
            \begin{equation}\label{eq:lowerEnergy:9}
                \abs{\dual{A_{u_h}(u - u_h), u_h}} \le h^{\frac{5}{2}} + h^3 + h^4. 
            \end{equation}
            For the third term on the right-hand side of \eqref{eq:lowerEnergy:1}, we have
            \begin{equation}\label{eq:lowerEnergy:10}
                \begin{aligned}
                    \frac{1}{4}\int_{\region} \beta \left( \abs{u}^2 - \abs{u_h}^2 \right)^2 \diff r &\le \frac{\beta}{4}\norm{u - u_h}_{L^3(\region)}^2\norm{u + u_h}_{L^6(\region)}^2 \\
                    &\lesssim \norm{u - u_h}_{L^2(\region)}\norm{u - u_h}_{L^6(\region)}\norm{u + u_h}_{H^1_h(\region)}^2 \\
                    &\lesssim \norm{u - u_h}_{L^2(\region)}\norm{u - u_h}_{H^1_h(\region)}\norm{u + u_h}_{H^1_h(\region)}^2 \lesssim h^3, 
                \end{aligned}
            \end{equation}
            where the second inequality comes from the interpolation inequality, the third inequality comes from Lemma~\ref{lem:itp}, and the fourth comes from the a priori estimation Theorem~\ref{thm:prior}. Combining \eqref{eq:lowerEnergy:9} and \eqref{eq:lowerEnergy:10}, there exists a constant $C_2 > 0$ such that
            \begin{equation}\label{eq:lowerEnergy:12}
                \abs{\dual{A_{u_h}(u - u_h), u_h}} + \frac{1}{4}\int_{\region} \beta \left( \abs{u}^2 - \abs{u_h}^2 \right)^2 \diff r \le C_2 \left( h^{\frac{5}{2}} + h^3 + h^4 \right). 
            \end{equation}
            Substituting \eqref{eq:lowerEnergy:11} and \eqref{eq:lowerEnergy:12} into \eqref{eq:lowerEnergy:1}, we finally obtain
            \begin{equation}\label{eq:lowerEnergy:19}
                \begin{aligned}
                    E(u) - E(u_h) &\ge \frac{1}{2}\dual{A_{u_h} (u - u_h), u - u_h} - \abs{\dual{A_{u_h}(u - u_h), u_h}} - \frac{1}{4}\int_{\region} \beta \left( \abs{u}^2 - \abs{u_h}^2 \right)^2 \diff r \\
                    &\ge C_1 h^2 - C_2 \left( h^{\frac{5}{2}} + h^3 + h^4 \right) = C_2 h^2 \left( \frac{C_1}{C_2} - h^{\frac{1}{2}} - h - h^2 \right). 
                \end{aligned}
            \end{equation}
            When $h$ is sufficiently small such that $h^{\frac{1}{2}} + h + h^2 \le \frac{C_1}{C_2}$, we have $E(u) \ge E(u_h)$, meaning that the $EQ_1^{\mathrm{rot}}$ element provides a lower bound estimate for the energy. 
        \end{proof}

        \begin{remark}
            According to the Courant--Fischer min-max theorem, the eigenvalues and energies coincide for linear eigenvalue problems, making the lower bound property naturally applicable to eigenvalues. For the nonlinear Gross--Pitaevskii eigenvalue problem~\eqref{eq:gpe}, a natural question arises as to whether a similar lower bound property holds for the eigenvalues themselves. Numerical experiments presented in Section~\ref{sec:numerical} suggest that this property holds at least in certain cases.
        \end{remark}

        Combining Theorem~\ref{thm:prior} and \ref{thm:lowerEnergy}, we can obtain the monotonicity of energy to a certain extent. 

        \begin{cor}\label{coro:monotonicity}
            For a sufficiently small mesh size $H$, there exists a constant $\eta < 1$, such that for all mesh size $h \le \eta H$, we have 
            \begin{equation*}
                E(u_H) \le E(u_h) \le E(u). 
            \end{equation*}
        \end{cor}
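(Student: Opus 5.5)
The right inequality $E(u_h)\le E(u)$ is exactly Theorem~\ref{thm:lowerEnergy}, valid for all sufficiently small $h$. So the task is the left inequality $E(u_H)\le E(u_h)$. The plan is to compare both discrete energies with $E(u)$, using a lower bound on the gap $E(u)-E(u_H)$ at the coarse level and an upper bound on $E(u)-E(u_h)$ at the fine level. We then choose $\eta$ so that the coarse gap dominates.

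\textbf{Lower bound at the coarse level.} The proof of Theorem~\ref{thm:lowerEnergy}, specifically \eqref{eq:lowerEnergy:19}, shows more than nonnegativity. It gives
\begin{equation*}
E(u)-E(u_H)\ge C_1H^2-C_2\bigl(H^{\frac12}+H+H^2\bigr)H^2 .
\end{equation*}
Hence for $H$ small enough that $H^{\frac12}+H+H^2\le \frac{C_1}{2C_2}$, we obtain $E(u)-E(u_H)\ge \frac{C_1}{2}H^2$. The constant $C_1$ comes from the saturation condition \eqref{eq:saturation:3} in Lemma~\ref{lem:saturation}, and $C_2$ from \eqref{eq:lowerEnergy:12}. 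Both are independent of the mesh size.

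\textbf{Upper bound at the fine level and choice of $\eta$.} By Theorem~\ref{thm:prior} there is a constant $C_3$, independent of $h$, such that $0\le E(u)-E(u_h)\le C_3h^2$ for all sufficiently small $h$; nonnegativity is Theorem~\ref{thm:lowerEnergy}. Choose
\begin{equation*}
\eta=\min\Bigl\{\tfrac12,\ \bigl(\tfrac{C_1}{2C_3}\bigr)^{1/2}\Bigr\}<1 .
\end{equation*}
Then for $h\le\eta H$ we have
\begin{equation*}
E(u)-E(u_h)\le C_3\eta^2H^2\le \tfrac{C_1}{2}H^2\le E(u)-E(u_H),
\end{equation*}
which rearranges to $E(u_H)\le E(u_h)$. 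Combined with Theorem~\ref{thm:lowerEnergy} for $h$, this gives $E(u_H)\le E(u_h)\le E(u)$.

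\textbf{Main point of care.} The constants $C_1,C_2,C_3$ must be uniform across the mesh family, i.e.\ the same for $\mT_H$ and $\mT_h$. This requires the meshes to be uniformly regular, which holds for rectangular or cubic meshes of a fixed shape family. The smallness thresholds from Theorems~\ref{thm:prior} and~\ref{thm:lowerEnergy} must also be satisfied, but $h\le\eta H<H$ inherits them automatically from $H$. If one only had the statement of Theorem~\ref{thm:lowerEnergy} rather than the quantitative estimate \eqref{eq:lowerEnergy:19}, the argument would fail. The essential input is therefore the explicit gap $\gtrsim H^2$ coming from saturation, and the proof should cite \eqref{eq:lowerEnergy:19} rather than the theorem statement alone.
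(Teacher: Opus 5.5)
Your proposal is correct and follows essentially the same argument as the paper. It combines the $O(h^2)$ upper bound on $E(u)-E(u_h)$ from Theorem~\ref{thm:prior} with the quantitative $H^2$ gap in \eqref{eq:lowerEnergy:19}, and chooses $\eta^2$ proportional to the ratio of the constants; your margin $\frac{C_1}{2C_2}$ plays the role of the paper's $\delta$, and taking the $\min$ with $\frac12$ explicitly secures $\eta<1$, which the paper leaves implicit.
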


        \begin{proof}
            According to Theorem~\ref{thm:prior} and \ref{thm:lowerEnergy}, there exists a constant $C_1$ such that 
            \begin{equation*}
                E(u) - E(u_h) \le C_1 h^2. 
            \end{equation*}
            According to \eqref{eq:lowerEnergy:19} in Theorem~\ref{thm:lowerEnergy}, there exist constants $C_1$ and $C_2$ such that 
            \begin{equation*}
                E(u) - E(u_H) \ge C_2 \left( C_3 - H^{\frac{1}{2}} - H - H^2 \right) H^2. 
            \end{equation*}
            For any $\delta < C_3$, as long as $H$ and $\eta$ satisfy 
            \begin{equation*}
                H^{\frac{1}{2}} + H + H^2 \le C_3 - \delta, \qquad \eta^2 \le \frac{C_2 \delta}{C_1}, 
            \end{equation*}
            we obtain 
            \begin{equation*}
                \begin{aligned}
                    E(u_h) - E(u_H) &= E(u_h) - E(u) + E(u) - E(u_H) \\
                    &\ge - C_1 h^2 + C_2 \left( C_3 - H^{\frac{1}{2}} - H - H^2 \right) H^2 \\
                    &\ge C_1 \left( \eta^2 H^2 - h^2 \right), 
                \end{aligned}
            \end{equation*}
            which leads to the conclusion that $E(u_H) \le E(u_h)$ when $h \le \eta H$. 
        \end{proof}

        This concludes our theoretical analysis. In the following section, we validate Theorems~\ref{thm:prior} and~\ref{thm:lowerEnergy} through numerical experiments.

    \section{Numerical experiments}\label{sec:numerical}

        We conclude this paper with a series of numerical experiments. All computations were implemented in MATLAB and performed on a desktop computer equipped with an Intel Core i5-14600KF processor and 32GB of RAM. In this section, we consider the optimization problem of the Gross--Pitaevskii energy functional~\eqref{eq:gp} on a two-dimensional square domain. Since analytical solutions to \eqref{eq:gp} are generally unavailable, we employ a reference solution computed using the $Q_2$ finite element method on a fine mesh consisting of $2^{10 \times 2}$ elements. To solve the discrete optimization problem~\eqref{eq:gph} in the finite element space, we utilize the Sobolev gradient flow method~\cite{henning2020sobolev} with a fixed step size of $1$. The iterations are terminated once the residual falls below $10^{-12}$. Notably, the Sobolev gradient flow can be interpreted as a preconditioned $L^2$ gradient flow. To further enhance computational efficiency for large-scale simulations, one might incorporate Riemannian acceleration techniques~\cite{shao2025riemannian}, which remains a direction for our future research.

        \subsection{Discrete characteristics and morphological evolution of ground states}\label{subsec:gs}

        First, we examine the morphological evolution of the ground state under varying mesh densities. We consider Case I from \cite{bao2004computing}*{Example 3} as a benchmark, where the computational domain is $\region = [-4,4]\times[-8,8]$, the repulsion parameter is $\beta = 400$, and the trapping potential is an anisotropic harmonic potential $V(x)=16x_1^2+x_2^2$. Figure~\ref{fig:gs} illustrates the transformation of the discrete ground state $u_h$ as the mesh is refined. On coarser grids, the discontinuities of the $EQ_1^{\mathrm{rot}}$ element across element interfaces are clearly visible, providing a direct visual representation of the nonconforming nature of the space. As the mesh size $h$ decreases, the discrete solution gradually converges toward a smooth, exact ground state.

        \begin{figure}[htb]
            \centering
            \begin{minipage}[c]{0.15\textwidth}
                \centering
                \includegraphics[width=1\textwidth]{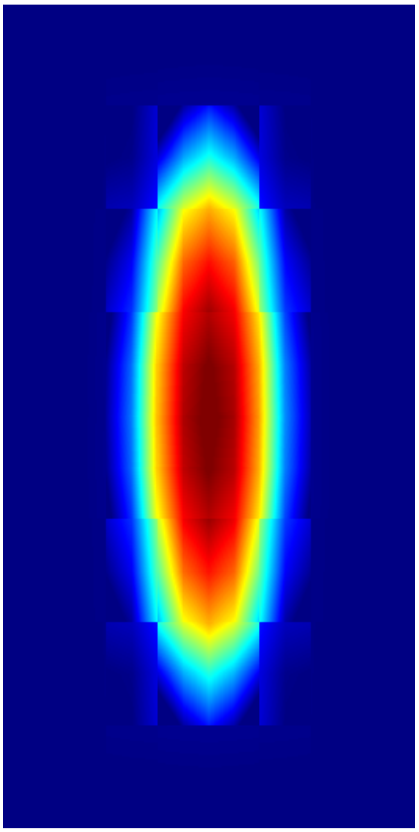}
                \subcaption{$8\times 8$}
            \end{minipage}
            \begin{minipage}[c]{0.15\textwidth}
                \centering
                \includegraphics[width=1\textwidth]{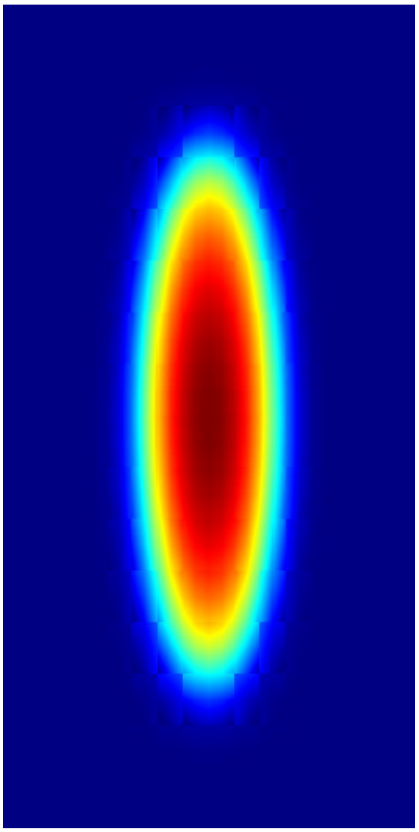}
                \subcaption{$16\times 16$}
            \end{minipage}
            \begin{minipage}[c]{0.15\textwidth}
                \centering
                \includegraphics[width=1\textwidth]{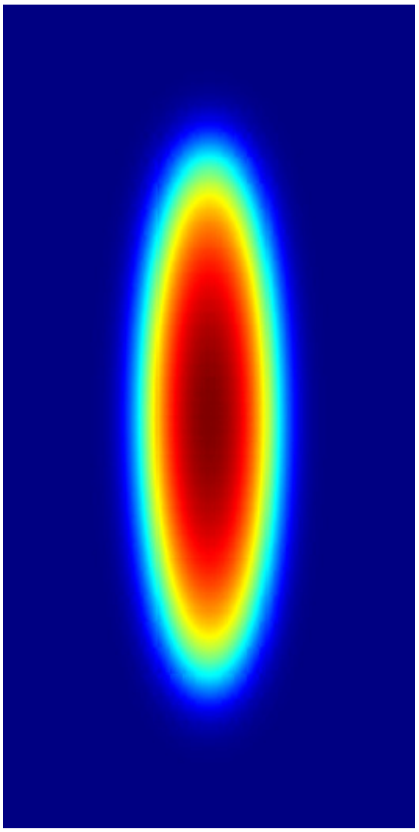}
                \subcaption{$32\times 32$}
            \end{minipage}
            \begin{minipage}[c]{0.15\textwidth}
                \centering
                \includegraphics[width=1\textwidth]{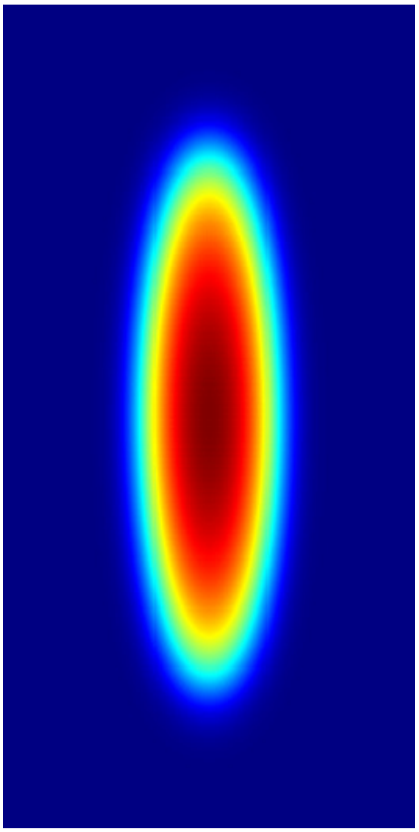}
                \subcaption{$64\times 64$}
            \end{minipage}
            \begin{minipage}[c]{0.15\textwidth}
                \centering
                \includegraphics[width=1\textwidth]{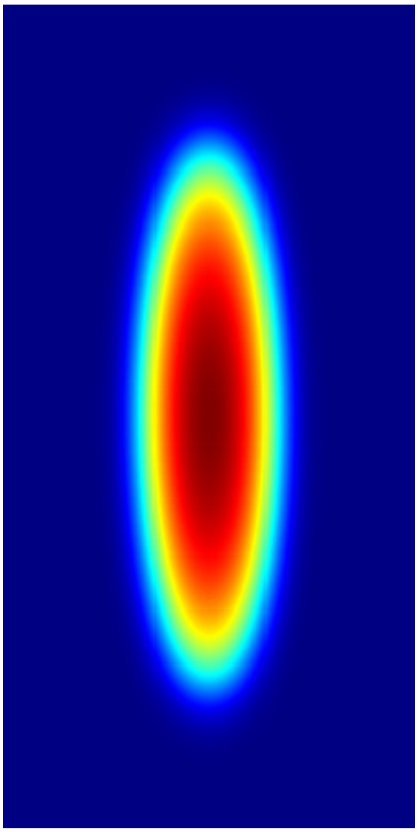}
                \subcaption{$128\times 128$}
            \end{minipage}
            \begin{minipage}[c]{0.15\textwidth}
                \centering
                \includegraphics[width=1\textwidth]{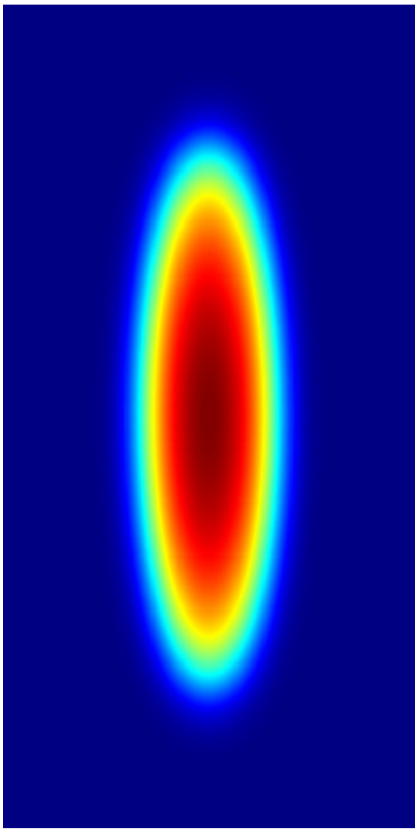}
                \subcaption{$256\times 256$}
            \end{minipage}
            \caption{Evolution of the ground state solution under different mesh sizes.}
            \label{fig:gs}
        \end{figure}

        \subsection{Numerical validation of convergence rates}\label{subsec:tab}

        To quantitatively verify the a priori error estimates established in Theorem~\ref{thm:prior}, we adopt the experimental setup from \cite{chen2024fully}. The domain is set as $\region = [-1,1]^2$, with parameter $\beta = 1$, and the potential function is defined as:
        \begin{equation*}
            V(x) = 1 - \sin^2 \left(\frac{\pi}{2}(x_1+1)\right) \sin^2 \left(\frac{\pi}{2}(x_2+1)\right). 
        \end{equation*}
        We investigate the behavior of the discrete energy $E(u_h)$, the discrete eigenvalue $\lambda_h$, and the solution errors as the number of grid cells ($N^2$) increases. Detailed results are summarized in Table~\ref{tab:tab1} and \ref{tab:tab2}. The data indicate that the energy, eigenvalue, and $L^2$ errors all exhibit a convergence rate of $O(h^2)$, while the $H^1$ error follows an $O(h)$ rate. These numerical results are in perfect agreement with the theoretical predictions of Theorem~\ref{thm:prior}, demonstrating that the $EQ_1^{\mathrm{rot}}$ element performs well for the Gross--Pitaevskii eigenvalue problem~\eqref{eq:gpe}.

        \begin{table}[htb]
            \centering
            \caption{Convergence results of the $L^2$ and $H^1$ errors for the $EQ_1^{\mathrm{rot}}$ element in Example~\ref{subsec:tab}.}
            \label{tab:tab1}

            \begin{tabular}{lll ll ll}
                \toprule
                \multirow{2}{*}{$N$} & \multirow{2}{*}{DOFs} & \multirow{2}{*}{cpu(s)} & 
                \multicolumn{2}{l}{$L^2$ error} & 
                \multicolumn{2}{l}{$H^1$ error} \\
                \cmidrule(r){4-5} \cmidrule(l){6-7}
                & & & error & order & error & order \\
                \midrule
                8    & 208     & 0.02   & $1.28\times 10^{-2}$ & 1.98 & $2.52\times 10^{-1}$ & 1.00 \\
                16   & 800     & 0.04   & $3.21\times 10^{-3}$ & 2.00 & $1.26\times 10^{-1}$ & 1.00 \\
                32   & 3136    & 0.11   & $8.03\times 10^{-4}$ & 2.00 & $6.30\times 10^{-2}$ & 1.00 \\
                64   & 12416   & 0.42   & $2.01\times 10^{-4}$ & 2.00 & $3.15\times 10^{-2}$ & 1.00 \\
                128  & 49408   & 1.88   & $5.02\times 10^{-5}$ & 2.00 & $1.57\times 10^{-2}$ & 1.00 \\
                256  & 197120  & 8.50   & $1.25\times 10^{-5}$ & 2.00 & $7.87\times 10^{-3}$ & 1.00 \\
                512  & 787456  & 33.74  & $3.14\times 10^{-6}$ & 2.00 & $3.93\times 10^{-3}$ & 1.00 \\
                1024 & 3147776 & 137.32 & $7.84\times 10^{-7}$ & 2.00 & $1.97\times 10^{-3}$ & 1.00 \\
                \bottomrule
            \end{tabular}
        \end{table}

        \begin{table}[htb]
            \centering
            \caption{Convergence results of the energy and eigenvalue for the $EQ_1^{\mathrm{rot}}$ element in Example~\ref{subsec:tab}.}
            \label{tab:tab2}

            \begin{tabular}{l lll lll}
                \toprule
                \multirow{2}{*}{$N$} & 
                \multicolumn{3}{l}{energy ($E$)} & 
                \multicolumn{3}{l}{eigenvalue ($\lambda$)} \\
                \cmidrule(r){2-4} \cmidrule(l){5-7}
                & value & error & order & value & error & order \\
                \midrule
                8    & 2.795872 & $3.09\times 10^{-2}$ & 1.89 & 5.872934 & $6.19\times 10^{-2}$ & 1.90 \\
                16   & 2.818900 & $7.88\times 10^{-3}$ & 1.97 & 5.919046 & $1.58\times 10^{-2}$ & 1.97 \\
                32   & 2.824798 & $1.98\times 10^{-3}$ & 1.99 & 5.930845 & $3.96\times 10^{-3}$ & 1.99 \\
                64   & 2.826281 & $4.95\times 10^{-4}$ & 2.00 & 5.933812 & $9.91\times 10^{-4}$ & 2.00 \\
                128  & 2.826652 & $1.24\times 10^{-4}$ & 2.00 & 5.934555 & $2.48\times 10^{-4}$ & 2.00 \\
                256  & 2.826745 & $3.10\times 10^{-5}$ & 2.00 & 5.934740 & $6.19\times 10^{-5}$ & 2.00 \\
                512  & 2.826768 & $7.74\times 10^{-6}$ & 2.00 & 5.934787 & $1.55\times 10^{-5}$ & 2.00 \\
                1024 & 2.826774 & $1.93\times 10^{-6}$ & 2.00 & 5.934798 & $3.87\times 10^{-6}$ & 2.00 \\
                \bottomrule
            \end{tabular}
        \end{table}

        \subsection{Comparison of conforming and nonconforming elements and energy lower bound property}\label{subsec:fig1}

        To highlight the unique properties of the $EQ_1^{\mathrm{rot}}$ element, we compare it with the classical conforming $Q_2$ finite element. The parameters are identical to those in Section~\ref{subsec:tab}, but we use a stronger repulsion parameter $\beta = 10$. Figure~\ref{fig:fig1} displays the error decay curves and energy and eigenvalue approximation behaviors for both methods. As shown in Figures~\ref{fig:fig1:1} and \ref{fig:fig1:2}, the $EQ_1^{\mathrm{rot}}$ element maintains the same convergence rates observed in the previous example, whereas the $Q_2$ element exhibits a higher convergence order, consistent with expectations. Crucially, Figure~\ref{fig:fig1:3} reveals that the $Q_2$ element approximates the exact energy from above, while the $EQ_1^{\mathrm{rot}}$ element provides an approximation from below. This numerical evidence strongly validates the conclusion of Theorem~\ref{thm:lowerEnergy}. In practical computations, combining the upper bound from conforming elements with the lower bound from nonconforming elements allows for the construction of a reliable confidence interval for the exact ground state energy, which a single conforming method cannot provide. Furthermore, Figure~\ref{fig:fig1:4} shows an interesting phenomenon: although our theoretical proof for the lower bound property is currently limited to the energy, the numerical results suggest that the discrete eigenvalue $\lambda_h$ also approaches the exact value from below. This implies that the lower bound property of nonconforming elements might be generalizable to the eigenvalues of the Gross--Pitaevskii equation under certain conditions.

        \begin{figure}[htb]
            \centering
            \begin{minipage}[c]{0.45\textwidth}
                \centering
                \includegraphics[width=1\textwidth]{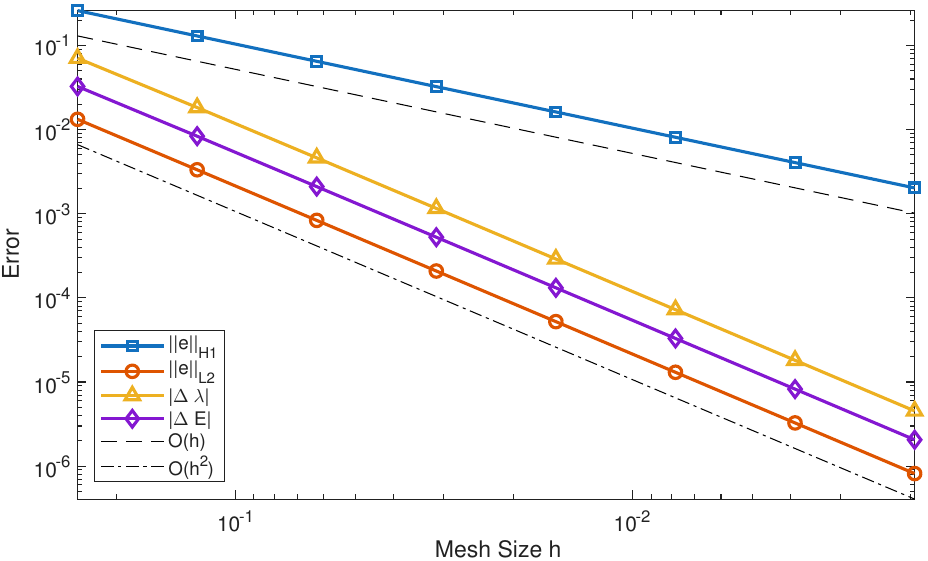}
                \subcaption{Convergence of $EQ_1^{\mathrm{rot}}$ element}
                \label{fig:fig1:1}
            \end{minipage}
            \begin{minipage}[c]{0.45\textwidth}
                \centering
                \includegraphics[width=1\textwidth]{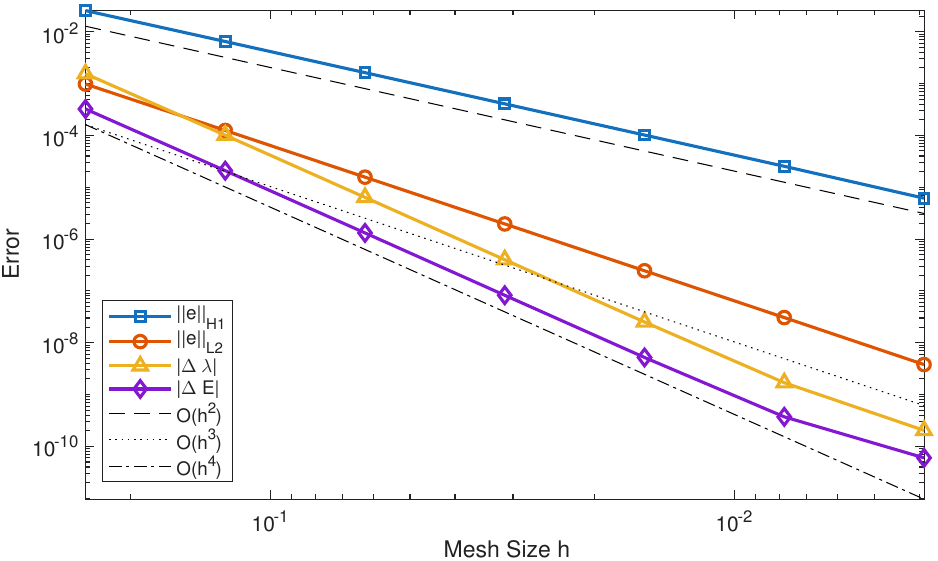}
                \subcaption{Convergence of $Q_2$ element}
                \label{fig:fig1:2}
            \end{minipage}
            \begin{minipage}[c]{0.45\textwidth}
                \centering
                \includegraphics[width=1\textwidth]{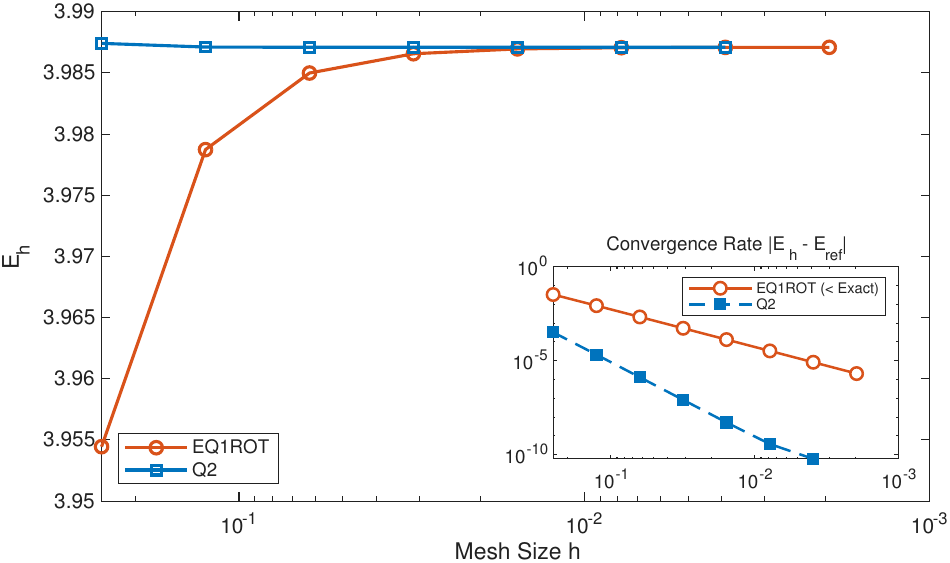}
                \subcaption{Evolution of energy functional $E$}
                \label{fig:fig1:3}
            \end{minipage}
            \begin{minipage}[c]{0.45\textwidth}
                \centering
                \includegraphics[width=1\textwidth]{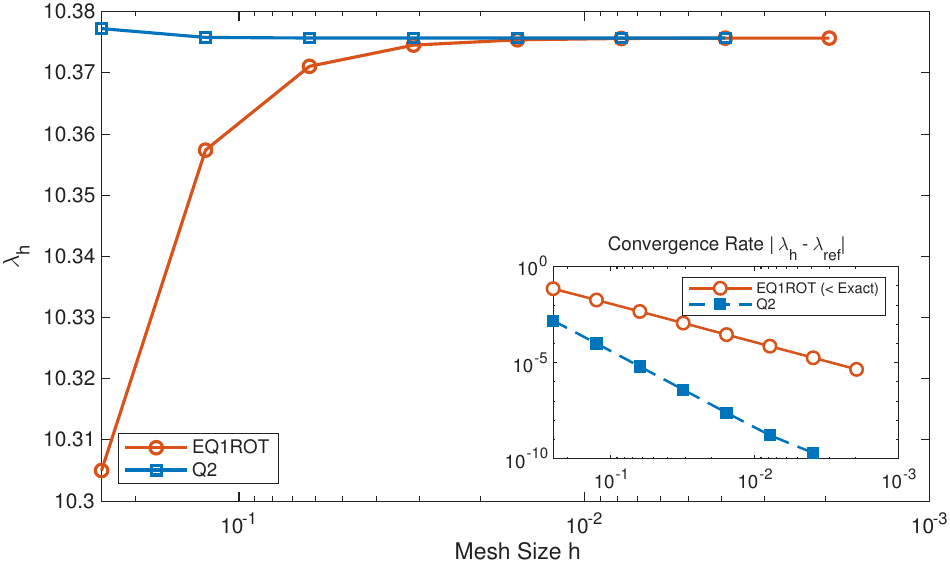}
                \subcaption{Evolution of eigenvalue $\lambda$}
                \label{fig:fig1:4}
            \end{minipage}
            \caption{Convergence results for Example~\ref{subsec:fig1}.}
            \label{fig:fig1}
        \end{figure}

        \subsection{Asymptotic convergence under complicated potentials}\label{subsec:fig2}

        Finally, we further evaluate the algorithm using Case II from \cite{bao2004computing}*{Example 3}. The domain is $\region = [-8, 8]^2$, with $\beta = 400$, and the trapping potential consists of an isotropic harmonic potential combined with a stirrer potential:
        \begin{equation*}
            V(x) = x_1^2 + x_2^2 + 8e^{-(x_1-1)^2-x_2^2}. 
        \end{equation*}
        The results are depicted in Figure~\ref{fig:fig2}. Notably, in contrast to Example~\ref{subsec:fig1}, the calculations on coarse meshes for both finite element spaces do not initially follow the theoretical convergence orders, nor does the $EQ_1^{\mathrm{rot}}$ element display the lower bound property for the discrete energy. The lower bound property only emerges once the mesh size $h$ is sufficiently small. This behavior aligns with the theoretical assumption in Theorem~\ref{thm:lowerEnergy} requiring $h$ to be "sufficiently small," indicating that the lower bound property holds asymptotically for certain problems. Additionally, in Figure~\ref{fig:fig2:1}, we observe that the eigenvalue $\lambda_h$ of the conforming $Q_2$ element does not strictly stay above the exact eigenvalue. This serves as a reminder that in nonlinear problems, while conforming elements guarantee an upper bound for the energy functional via the variational principle, their eigenvalues are influenced by both the nonlinear terms and mesh resolution, and thus are not as strictly guaranteed to approach from above as those of linear elliptic operators.

        \begin{figure}[htb]
            \centering
            \begin{minipage}[c]{0.45\textwidth}
                \centering
                \includegraphics[width=1\textwidth]{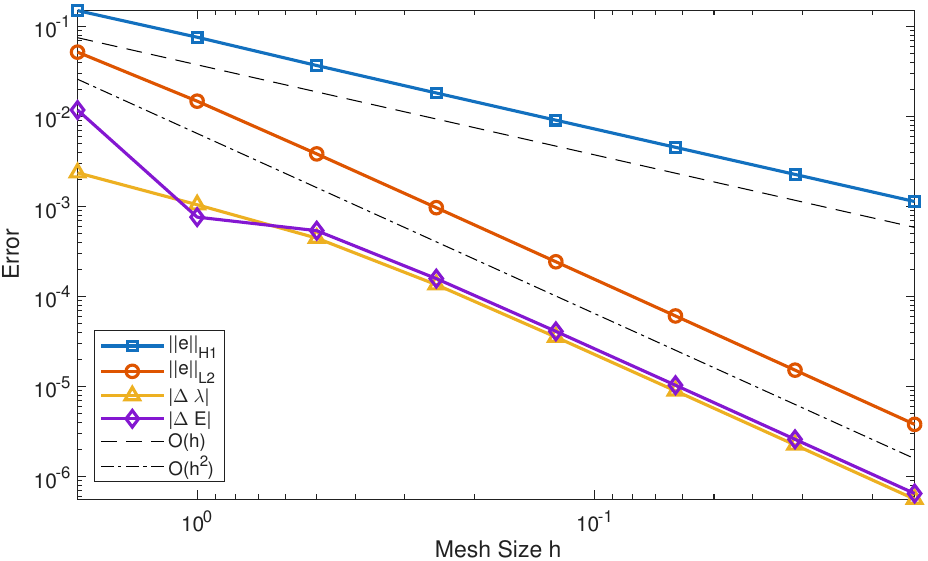}
                \subcaption{Convergence of $EQ_1^{\mathrm{rot}}$ element}
            \end{minipage}
            \begin{minipage}[c]{0.45\textwidth}
                \centering
                \includegraphics[width=1\textwidth]{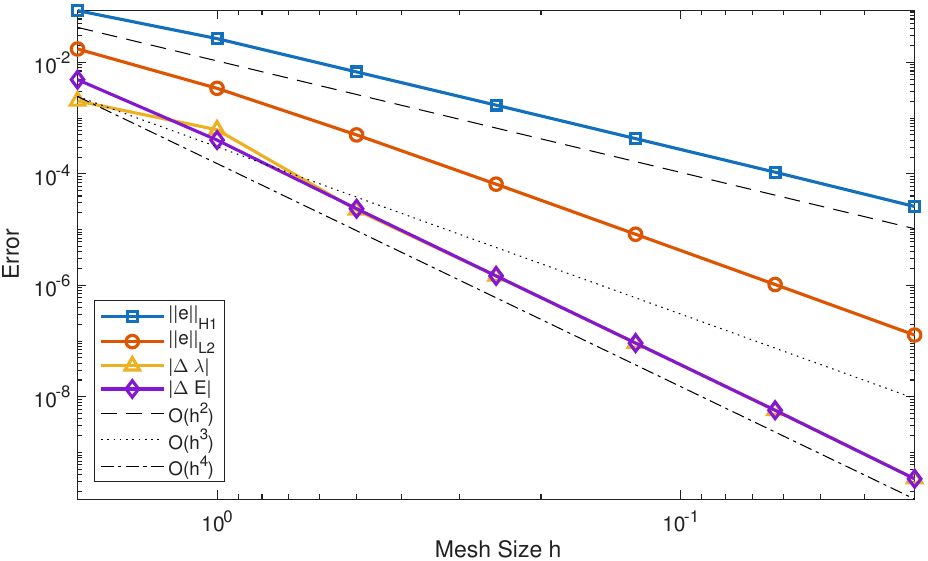}
                \subcaption{Convergence of $Q_2$ element}
            \end{minipage}
            \begin{minipage}[c]{0.45\textwidth}
                \centering
                \includegraphics[width=1\textwidth]{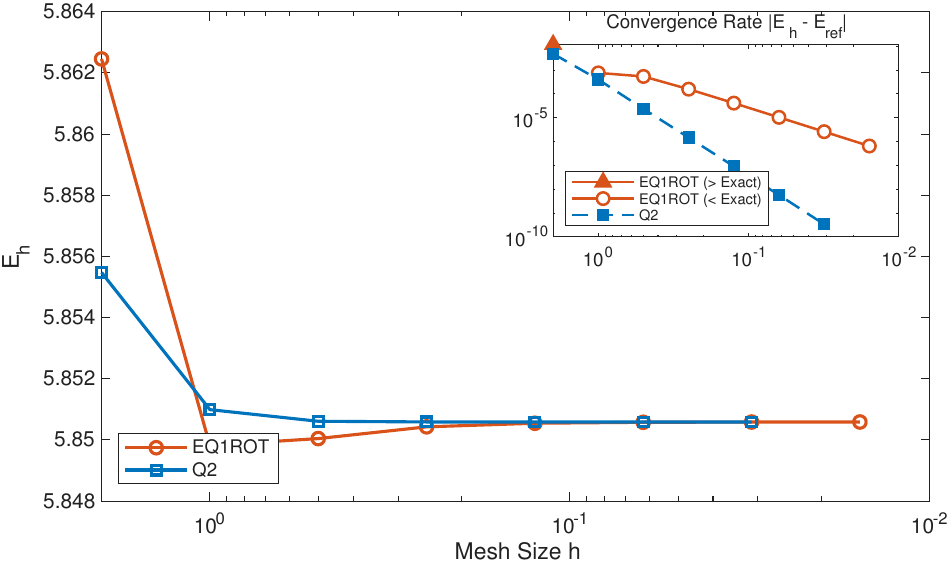}
                \subcaption{Evolution of energy functional $E$}
            \end{minipage}
            \begin{minipage}[c]{0.45\textwidth}
                \centering
                \includegraphics[width=1\textwidth]{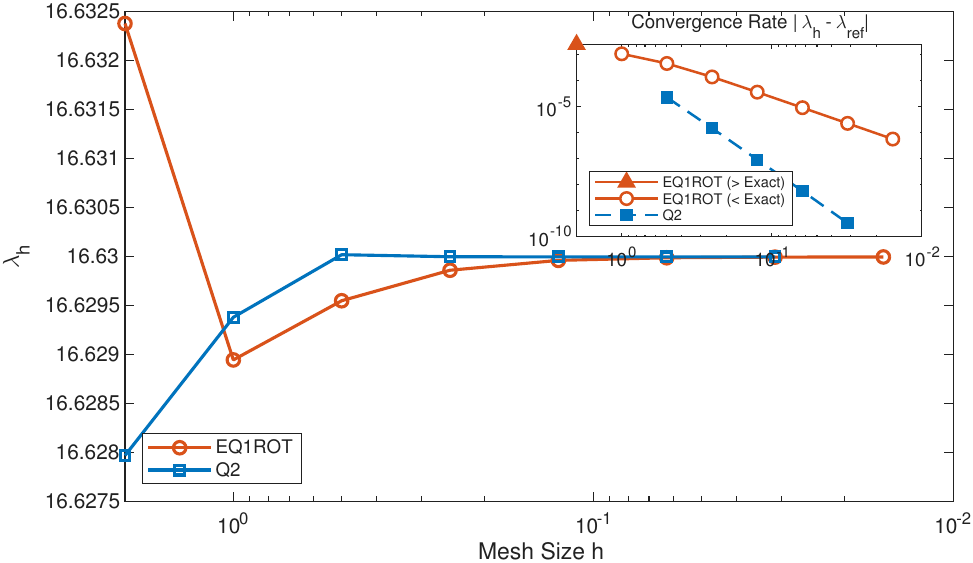}
                \subcaption{Evolution of eigenvalue $\lambda$}
                \label{fig:fig2:1}
            \end{minipage}
            \caption{Convergence results for Example~\ref{subsec:fig2}.}
            \label{fig:fig2}
        \end{figure}

    \section{Conclusions}

        In this work, we have studied the approximation properties of the constrained optimization problem~\eqref{eq:gp} for the Gross--Pitaevskii energy functional within the framework of nonconforming finite elements. We first established a general theoretical analysis framework, providing a priori error estimates for the discrete energy, the ground state eigenvalue, and the ground state solution in both $L^2$- and $H^1$-norms. Subsequently, this framework was applied to the $EQ_1^{\mathrm{rot}}$ element, where explicit convergence rates were derived. Moreover, we theoretically demonstrated that the $EQ_1^{\mathrm{rot}}$ element provides a lower bound approximation to the exact ground state energy when the mesh size is sufficiently small. This lower bound property serves as a complement to the upper bound estimates typically produced by conforming finite element methods. Finally, numerical results have validated our theoretical analysis, demonstrating the effectiveness of the $EQ_1^{\mathrm{rot}}$ element in solving this class of nonlinear eigenvalue problems.

    \subsection*{Acknowledgements}

        Chen is supported by the National Natural Science Foundation of China (NSFC 12471369 and NSFC 12241101). We are deeply grateful to the reviewers for their constructive comments and suggestions, which have led to improvements in this paper. In particular, their feedback inspired us to propose Lemma~\ref{lem:saturation} and Corollary~\ref{coro:monotonicity}.

    \bibliographystyle{amsxport}
    \bibliography{reference}

    \appendix

    \section{Properties of discrete ground states}\label{sec:prepare}

        In this section, we present several lemmas concerning the finite element space $V_h$ and its associated discrete ground states $u_h$. Lemma~\ref{lem:itp} provides an extension of the Sobolev embedding theorem to the space $V_h + H_0^1(\region)$.

        \begin{lemma}\label{lem:itp}
            For any $v_h \in V_h + H_0^1(\region)$ and $2 \le p \le 6$, it holds that
            \begin{equation*}
                \norm{v_h}_{L^p(\region)} \lesssim \norm{v_h}_{H^1_h(\region)}.
            \end{equation*}
        \end{lemma}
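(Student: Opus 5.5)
The plan is to build a conforming companion of $v_h$ and compare the two. For $v_h\in V_h+H_0^1(\region)$, write $v_h = w_h + z$ with $w_h\in V_h$ and $z\in H_0^1(\region)$. Let $E_h w_h\in H_0^1(\region)$ be an enriching (averaging) operator. It is defined by mapping $w_h$ into a conforming Lagrange space on $\mT_h$ (or on a refinement of it), with nodal values obtained by averaging the piecewise polynomial values over the elements sharing a node, and zero at boundary nodes. Then $\tilde v := E_h w_h + z\in H_0^1(\region)$, and the classical Sobolev embedding $H_0^1(\region)\hookrightarrow L^p(\region)$ for $2\le p\le 6$ and $d\le 3$ gives $\norm{\tilde v}_{L^p}\lesssim\norm{\tilde v}_{H^1}$. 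It remains to control the difference $v_h-\tilde v = w_h - E_h w_h$.

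The key local estimate is the standard jump bound for averaging operators. Using only that $w_h|_K\in\mathbb{P}^k(K)$ and mesh regularity, one has for each $K\in\mT_h$
\begin{equation*}
\norm{w_h - E_h w_h}_{L^2(K)}^2 + h^2\norm{\nabla(w_h-E_hw_h)}_{L^2(K)}^2 \lesssim \sum_{e\cap \overline K\neq\emptyset} h\norm{[w_h]}_{L^2(e)}^2 .
\end{equation*}
Because $z$ is continuous with zero trace, $[v_h]=[w_h]$ on every face. Summing over elements and invoking \eqref{eq:assumpt:4} gives
\begin{equation*}
h^{-1}\norm{w_h-E_hw_h}_{L^2}+\norm{\nabla_h(w_h-E_hw_h)}_{L^2}\lesssim \Big(\sum_e h^{-1}\norm{[v_h]}^2_{L^2(e)}\Big)^{1/2}\lesssim \norm{v_h}_{H^1_h}.
\end{equation*}
It follows that $\norm{\tilde v}_{H^1}\le \norm{v_h}_{H^1_h}+\norm{\nabla_h(w_h-E_hw_h)}_{L^2} + \norm{w_h-E_hw_h}_{L^2}\lesssim\norm{v_h}_{H^1_h}$.

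To bound $\norm{w_h-E_hw_h}_{L^p}$, I would use an inverse estimate on each element, $\norm{\phi}_{L^p(K)}\lesssim h^{d(1/p-1/2)}\norm{\phi}_{L^2(K)}$ for polynomials $\phi$. Summing with $\ell^2\subset\ell^p$ for $p\ge2$ gives
\begin{equation*}
\norm{w_h-E_hw_h}_{L^p}\lesssim h^{d(1/p-1/2)}\norm{w_h-E_hw_h}_{L^2}\lesssim h^{1+d(1/p-1/2)}\norm{v_h}_{H^1_h}.
\end{equation*}
The exponent $1-d/2+d/p$ is nonnegative exactly when $p\le 2d/(d-2)$, that is $p\le 6$ for $d=3$ and all finite $p$ for $d=2$. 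So for small $h$ the bound is $\lesssim \norm{v_h}_{H^1_h}$, and the triangle inequality $\norm{v_h}_{L^p}\le\norm{\tilde v}_{L^p}+\norm{w_h-E_hw_h}_{L^p}$ concludes the proof.

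The main obstacle is making the averaging operator rigorous for the given element. For $EQ_1^{\mathrm{rot}}$ the functions are polynomials on rectangles but not $Q_1$-conforming, so I would map into the conforming $Q_2$ (or $Q_k$) Lagrange space on the same mesh. The local jump bound then comes from a scaling argument: on the reference patch, the difference at a node equals differences of element traces, and these are controlled by the face jumps by norm equivalence on the finite-dimensional space. A secondary subtlety is that $w_h$ is not unique in the decomposition $v_h=w_h+z$; this is harmless because the final bound uses only the jumps of $v_h$ and $\norm{v_h}_{H^1_h}$. Alternatively, I would have to verify that $E_h$ applied to $v_h$ directly, after piecewise projection onto polynomials, behaves the same way.
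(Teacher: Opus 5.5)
Your argument is correct, but it takes a different route from the paper. The paper builds no conforming companion. It quotes the broken Sobolev inequality of Lasis and S\"uli, $\norm{v_h}_{L^6(\region)} \lesssim \abs{v_h}_{H^1_h(\region)} + \bigl(\sum_{e\in\mF_h}\norm{[v_h]}_{L^4(e)}^2\bigr)^{1/2}$. It then uses $[v_h]=[v_1]$ for the $V_h$-component $v_1$, so the jump is a face polynomial, and applies the face inverse estimate $\norm{[v_1]}_{L^4(e)}\lesssim h^{-1/2}\norm{[v_1]}_{L^2(e)}$. It closes with \eqref{eq:assumpt:4} and finishes with $L^6\hookrightarrow L^p$.

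You instead essentially reprove that broken embedding for piecewise polynomials, using an averaging operator. The $H_0^1$ Sobolev embedding handles $E_hw_h+z$. The standard bound $\norm{w_h-E_hw_h}_{L^2}\lesssim h\bigl(\sum_e h^{-1}\norm{[w_h]}_{L^2(e)}^2\bigr)^{1/2}$, together with elementwise inverse estimates, handles the remainder. The condition $p\le 6$ then enters exactly through the exponent $1+d/p-d/2\ge 0$.

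Both proofs rest on the same two ingredients: the jump of $v_h$ equals the polynomial jump of its $V_h$-part, and condition (3) of Assumption~\ref{assumpt:consist} holds. So neither needs weaker hypotheses on $V_h$. The paper's version is shorter but delegates the real work to an external theorem whose mesh hypotheses are left implicit.

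Yours is self-contained and gives a useful byproduct: a companion $\tilde v\in H_0^1(\region)$ with $\norm{\tilde v}_{H^1(\region)}\lesssim\norm{v_h}_{H^1_h(\region)}$ and $\norm{v_h-\tilde v}_{L^2(\region)}\lesssim h\norm{v_h}_{H^1_h(\region)}$. This would also give Lemma~\ref{lem:embd} directly from Rellich--Kondrachov instead of the BV argument. The price is that $E_h$ must actually be constructed, which uses mesh structure: a conforming Lagrange space containing the local shape functions (e.g. $Q_2\supset EQ_1^{\mathrm{rot}}$), and face-connectivity of the element patch around each node. For the rectangular and simplicial meshes considered here this is standard.

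Two small points. First, $z\in H_0^1(\region)$ need not be continuous; what you actually use is that its traces from the two sides of an interior face coincide and that its boundary trace vanishes. Second, your closing ``alternatively'' remark is unnecessary, since the decomposition argument already makes the non-uniqueness of $w_h$ irrelevant.
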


        \begin{proof}
            We present the proof for the three-dimensional case ($d=3$); the case for $d=2$ can be obtained analogously. Let $v_h = v_1 + v_2 \in V_h + H_0^1(\region)$, where $v_1 \in V_h$ and $v_2 \in H_0^1(\region)$. According to Theorem 2.1 and Corollary 2.2 in \cite{lasis2003poincare}, we have
            \begin{equation*}
                \begin{aligned}
                    \norm{v_h}_{L^6(\region)} &\lesssim \abs{v_h}_{H^1_h(\region)} + \left( \sum_{e\in \mF_h} \norm{[v_h]}_{L^4(e)}^2 \right)^{\frac{1}{2}} = \abs{v_h}_{H^1_h(\region)} + \left( \sum_{e\in \mF_h} \norm{[v_1]}_{L^4(e)}^2 \right)^{\frac{1}{2}} \\
                    &\lesssim \abs{v_h}_{H^1_h(\region)} + \left( \sum_{e\in \mF_h} h^{-1}\norm{[v_1]}_{L^2(e)}^2 \right)^{\frac{1}{2}} = \abs{v_h}_{H^1_h(\region)} + \left( \sum_{e\in \mF_h} h^{-1}\norm{[v_h]}_{L^2(e)}^2 \right)^{\frac{1}{2}} \\
                    &\lesssim \abs{v_h}_{H^1_h(\region)} + \norm{v_h}_{H^1_h(\region)} \le 2\norm{v_h}_{H^1_h(\region)}. 
                \end{aligned}
            \end{equation*}
            where the second to last inequality follows from \eqref{eq:assumpt:4} in Assumption~\ref{assumpt:consist}. By the embedding $L^6(\region) \hookrightarrow L^p(\region)$, we obtain
            \begin{equation*}
                \norm{v_h}_{L^p(\region)} \lesssim \norm{v_h}_{L^6(\region)} \lesssim \norm{v_h}_{H^1_h(\region)}. 
            \end{equation*}
            This completes the proof.
        \end{proof}

        Similar to \cite{pietro2012mathematical}*{Theorem 5.6}, we establish a generalized Rellich--Kondrachov compact embedding theorem as follows.

        \begin{lemma}\label{lem:embd}
            If $\{v_h\}$ is a sequence in $\{V_h + H_0^1(\region)\}$ that is uniformly bounded in the $H^1_h$-norm, then $\{v_h\}$ is precompact in $L^2(\region)$ and $L^4(\region)$.
        \end{lemma}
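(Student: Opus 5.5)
The plan is to follow the route of \cite{pietro2012mathematical}*{Theorem 5.6}: compare each broken function with a smoothed $H^1$ function and let the Kolmogorov--Riesz (Fréchet--Kolmogorov) criterion in $L^2$ do the rest. Let $\{v_h\}$ satisfy $\norm{v_h}_{H^1_h(\region)}\le C$. Extend each $v_h$ by zero outside $\region$ to a function on $\R^d$.

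\textbf{Step 1: translation estimate in $L^1$.} The zero extension is a function of bounded variation whose distributional gradient is $\nabla_h v_h$ on elements plus jump measures on the faces $\mF_h$, including boundary faces where the jump is the trace. Hence for any $\xi\in\R^d$ we aim for
\begin{equation*}
\norm{v_h(\cdot+\xi)-v_h}_{L^1(\R^d)} \le \abs{\xi}\left( \norm{\nabla_h v_h}_{L^1(\region)} + \sum_{e\in\mF_h}\norm{[v_h]}_{L^1(e)} \right).
\end{equation*}
We then bound the jump sum. By Cauchy--Schwarz, $\sum_e \norm{[v_h]}_{L^1(e)} \le \bigl(\sum_e h^{-1}\norm{[v_h]}_{L^2(e)}^2\bigr)^{1/2}\bigl(\sum_e h\abs{e}\bigr)^{1/2}$. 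Shape regularity gives $\sum_e h\abs{e}\lesssim \abs{\region}$. Assumption~\eqref{eq:assumpt:4} bounds the first factor by $\norm{v_h}_{H^1_h(\region)}$. Together with $\norm{\nabla_h v_h}_{L^1}\lesssim \norm{\nabla_h v_h}_{L^2}$, this gives
\begin{equation*}
\norm{v_h(\cdot+\xi)-v_h}_{L^1}\lesssim \abs{\xi}.
\end{equation*}

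\textbf{Step 2: upgrade to $L^2$.} Interpolate between $L^1$ and $L^6$ (or $L^p$, $p>2$, in $d=2$), using Lemma~\ref{lem:itp} for the uniform $L^6$ bound: $\norm{w}_{L^2}\le \norm{w}_{L^1}^{\theta}\norm{w}_{L^6}^{1-\theta}$ with $\theta>0$. This yields
\begin{equation*}
\norm{v_h(\cdot+\xi)-v_h}_{L^2}\lesssim \abs{\xi}^{\theta},
\end{equation*}
uniformly in $h$. Boundedness in $L^2$ holds trivially, and all functions are supported in the bounded set $\region$. The Kolmogorov--Riesz theorem then gives precompactness in $L^2(\region)$.

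\textbf{Step 3: pass to $L^4$.} Take a subsequence that is Cauchy in $L^2$. The interpolation inequality $\norm{w}_{L^4}\le \norm{w}_{L^2}^{\gamma}\norm{w}_{L^6}^{1-\gamma}$ with $\gamma>0$, combined with the uniform $L^6$ bound from Lemma~\ref{lem:itp}, shows the subsequence is Cauchy in $L^4$ as well. Hence the sequence is precompact in $L^4(\region)$.

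\textbf{Main obstacle.} The delicate step is Step 1. It must be justified rigorously that the BV total variation of the zero extension equals the broken gradient plus the face jumps; this requires elementwise integration by parts against test fields, so that $D v_h$ is identified as a measure. The jumps must also be bounded uniformly for elements of $V_h+H_0^1(\region)$. Here the $H_0^1$ part contributes no jumps, so only the $V_h$ component matters, exactly as in the proof of Lemma~\ref{lem:itp}. If the BV argument proves awkward, a fallback is to approximate $v_h$ by a conforming averaged (enriching) function $\tilde v_h\in H_0^1(\region)$ with $\norm{v_h-\tilde v_h}_{L^2}\lesssim h\norm{v_h}_{H^1_h}$ and $\norm{\tilde v_h}_{H^1}\lesssim \norm{v_h}_{H^1_h}$, controlled by the jump term in \eqref{eq:assumpt:4}. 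Classical Rellich applied to $\tilde v_h$ then gives precompactness of $\{v_h\}$ in $L^2$.
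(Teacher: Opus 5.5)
Your proof is correct. Its skeleton matches the paper's: a uniform BV bound, then compactness in $L^1$, then an upgrade to $L^2$ and $L^4$ by interpolating against the uniform $L^6$ bound of Lemma~\ref{lem:itp}.

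The real difference is how the BV bound is obtained. The paper controls the face term $\sum_{K}\int_{\partial K} v_h(\varphi\cdot n)\diff S$ by invoking the generalized patch test \eqref{eq:assumpt:3}. That assumption only makes the term small for each fixed field $\varphi\in H^1(\region;\R^d)$, and the threshold $h_\delta$ depends on $\varphi$. For $EQ_1^{\mathrm{rot}}$ the available rate is $h\norm{\varphi}_{H^1(\region)}$, which $\norm{\varphi}_{L^\infty(\region)}\le 1$ does not control. So the patch test alone does not justify a bound uniform over the supremum that defines the BV seminorm.

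Your Step 1 bounds the face term instead by $\norm{\varphi}_{L^\infty}\sum_{e\in\mF_h}\norm{[v_h]}_{L^1(e)}$. You then use the jump assumption \eqref{eq:assumpt:4} together with Cauchy--Schwarz. This is uniform in $\varphi$, so it actually delivers the estimate the paper needs, and the patch test is never used.

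Your use of Kolmogorov--Riesz on the zero extensions, in place of citing the compact embedding $BV\hookrightarrow L^1$, is essentially cosmetic. It makes the argument self-contained and gives an explicit translation modulus $\abs{\xi}^{2/5}$. The total variation must then include the boundary faces, which \eqref{eq:assumpt:4} covers through the paper's definition of $[v_h]$ on $\partial\region$.

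One small correction: the bound $\sum_{e}h\abs{e}\lesssim\abs{\region}$ needs quasi-uniformity of $\mT_h$, not just shape regularity. This is because \eqref{eq:assumpt:4} is stated with the single global $h$, and shape regularity only gives $h_K\abs{e}\lesssim\abs{K}$. For the rectangular meshes considered in the paper this is harmless. Your enriching-operator fallback is not needed.
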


        \begin{proof}
            Consider the Bounded Variation (BV) norm~\cite{pietro2012mathematical}:
            \begin{equation*}
                \norm{v}_{BV}=\sup \left\{\int_{\region}v\left(\nabla\cdot\varphi\right) \diff r: \varphi\in C_0^{\infty}\left(\region; \R^d\right), \norm{\varphi}_{L^{\infty}(\region)}\le 1\right\}, 
            \end{equation*}
            We first demonstrate the existence of a mesh size $h_{\delta}>0$ such that the sequence $\{v_h\}_{h<h_{\delta}}$ is uniformly bounded in the BV-norm. For any $\varphi\in C_0^{\infty}(\region; \R^d)$ satisfying $\norm{\varphi}_{L^{\infty}(\region)}\le 1$, integration by parts yields
            \begin{equation*}
                \begin{aligned}
                    \int_{\region} v_h (\nabla\cdot \varphi) \diff r &= \sum_{K\in\mT_h} \int_{K} v_h (\nabla\cdot \varphi) \diff r = \sum_{K\in\mT_h} \left(-\int_K \varphi\cdot\nabla_h v_h \diff r + \int_{\partial K}v_h(\varphi\cdot n) \diff S\right) \\
                    &= -\int_{\region} \varphi\cdot\nabla_h v_h \diff r + \sum_{K\in\mT_h} \int_{\partial K}v_h(\varphi\cdot n) \diff S, 
                \end{aligned}
            \end{equation*}
            By the generalized patch test assumption~\eqref{eq:assumpt:3}, there exists $h_{\delta}>0$ such that for any $h<h_{\delta}$,
            \begin{equation*}
                \abs{\sum_{K\in\mT_h} \int_{\partial K}v_h(\varphi\cdot n) \diff S} \le 1, 
            \end{equation*}
            It then follows that
            \begin{equation*}
                \begin{aligned}
                    \norm{v_h}_{BV} &= \sup_{\norm{\varphi}_{L^{\infty}(\region)}\le 1} \int_{\region} v_h (\nabla\cdot \varphi) \diff r \\
                    &\le \sup_{\norm{\varphi}_{L^{\infty}(\region)}\le 1} \abs{\int_{\region} \varphi\cdot\nabla_h v_h \diff r} + \abs{\sum_{K\in\mT_h} \int_{\partial K}v_h(\varphi\cdot n) \diff S} \\
                    &\le \int_{\region} \abs{\nabla_h v_h} \diff r + 1 \le \sqrt{\abs{\region}}\norm{\nabla_h v_h}_{L^2(\region)} + 1 \\
                    &\le \sqrt{\abs{\region}}\norm{v_h}_{H^1_h(\region)} + 1 < \infty, 
                \end{aligned}
            \end{equation*}
            This indicates that $\{v_h\}_{h<h_{\delta}}$ is uniformly bounded in the BV-norm. According to \cite{ambrosio2000functions}*{Corollary 3.49}, the compact embedding $BV(\region)\hookrightarrow L^1(\region)$ implies that $\{v_h\}_{h<h_{\delta}}$ is precompact in $L^1(\region)$, which ensures the existence of a Cauchy subsequence $\{v_{h_j}\}_{j\in\N}$ in $L^1(\region)$. Furthermore, by Lemma~\ref{lem:itp}, the sequence $\{v_{h_j}\}_{j\in\N}$ is uniformly bounded in the $L^6$-norm. Utilizing the interpolation inequality, it can be shown that $\{v_{h_j}\}$ is a Cauchy sequence in $L^2(\region)$ and $L^4(\region)$, and consequently, it converges in these norms.

            In summary, $\{v_h\}$ is precompact in both $L^2(\region)$ and $L^4(\region)$.
        \end{proof}

        Finally, we address the uniform boundedness of $\{u_h\}$.

        \begin{lemma}\label{lem:proj}
            Let the operator $P_{H^1,h}: H_0^1(\region)\to V_h$ be defined such that for any $v\in H_0^1(\region)$ and $v_h\in V_h$,
            \begin{equation*}
                (P_{H^1,h}v, v_h)_{H^1_h(\region)} = (v, v_h)_{H^1_h(\region)}.
            \end{equation*}
            Then, the following convergence holds:
            \begin{equation*}
                \lim_{h\to 0}\abs{E(u) - E(P_{H^1,h}u)} = 0.
            \end{equation*}
        \end{lemma}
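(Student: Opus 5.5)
Write $u_h^\ast := P_{H^1,h}u$. The plan is to show $\norm{u - u_h^\ast}_{H^1_h(\region)} \to 0$ as $h\to 0$. The energy $E$ is continuous with respect to the broken norm on $V_h + H_0^1(\region)$, uniformly in $h$, so the claim then follows.

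\textbf{Step 1: best approximation.} Since $u\in H_0^1(\region)\subseteq V_h+H_0^1(\region)$ and $(\cdot,\cdot)_{H^1_h(\region)}$ is an inner product on $V_h+H_0^1(\region)$, the definition of $P_{H^1,h}$ says exactly that $u - u_h^\ast$ is $H^1_h$-orthogonal to $V_h$. Hence $u_h^\ast$ is the $H^1_h$-orthogonal projection of $u$ onto $V_h$, and
\begin{equation*}
\norm{u - u_h^\ast}_{H^1_h(\region)} = \inf_{v_h\in V_h}\norm{u-v_h}_{H^1_h(\region)}.
\end{equation*}
The approximation property \eqref{eq:assumpt:1} gives that this tends to $0$. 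In particular $\norm{u_h^\ast}_{H^1_h(\region)} \le \norm{u}_{H^1(\region)}$, so the projections are uniformly bounded.

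\textbf{Step 2: continuity of $E$.} Split
\begin{equation*}
E(u)-E(u_h^\ast) = \tfrac12\bigl(\abs{u}_{H^1}^2-\abs{u_h^\ast}_{H^1_h}^2\bigr) + \tfrac12\int_\region V(u^2-(u_h^\ast)^2)\diff r + \tfrac{\beta}{4}\int_\region (u^4-(u_h^\ast)^4)\diff r .
\end{equation*}
Treat each term via $a^2-b^2=(a-b)(a+b)$ and $a^4-b^4=(a-b)(a+b)(a^2+b^2)$:
\begin{itemize}
\item The gradient term is bounded by $\norm{\nabla_h(u-u_h^\ast)}_{L^2}\,\norm{\nabla_h(u+u_h^\ast)}_{L^2}$.
\item The potential term is bounded by $\norm{V}_{L^\infty}\norm{u-u_h^\ast}_{L^2}\norm{u+u_h^\ast}_{L^2}$.
\item For the quartic term, use H\"older with exponents $(4,4,2)$:
\begin{equation*}
\norm{u-u_h^\ast}_{L^4}\,\norm{u+u_h^\ast}_{L^4}\,\norm{u^2+(u_h^\ast)^2}_{L^2} \le \norm{u-u_h^\ast}_{L^4}\,\norm{u+u_h^\ast}_{L^4}\bigl(\norm{u}_{L^4}^2+\norm{u_h^\ast}_{L^4}^2\bigr).
\end{equation*}
\end{itemize}
By Lemma~\ref{lem:itp} with $p=4$, every $L^4$ norm of an element of $V_h+H_0^1(\region)$ is controlled by its $H^1_h$ norm. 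Combined with the uniform bound from Step 1, all three terms are $\lesssim \norm{u-u_h^\ast}_{H^1_h(\region)}$, which tends to $0$.

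\textbf{Main subtlety.} The only nontrivial point is controlling the quartic term in the nonconforming setting, where the standard Sobolev embedding is unavailable. This is exactly what Lemma~\ref{lem:itp} provides; it relies on the jump bound \eqref{eq:assumpt:4}, and that is why this assumption is needed. One remark: $u_h^\ast$ need not lie in $\mM_h$. When the lemma is used in Theorem~\ref{thm:asymp}, one should renormalize, $u_h^\ast/\norm{u_h^\ast}_{L^2}$. Since $\norm{u_h^\ast}_{L^2}\to 1$ by Step 1, the same continuity estimate shows the energy of the renormalized function also converges to $E(u)$.
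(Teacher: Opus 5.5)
Your proposal is correct and takes essentially the same route as the paper's proof. Both identify $P_{H^1,h}u$ as the $H^1_h$-best approximation with $\norm{P_{H^1,h}u}_{H^1_h(\region)}\le\norm{u}_{H^1(\region)}$, bound the gradient, potential and quartic differences by $\norm{u-P_{H^1,h}u}_{H^1_h(\region)}$ using Lemma~\ref{lem:itp} for the $L^4$ norms, and conclude with the approximation property \eqref{eq:assumpt:1}. Your closing remark about renormalizing onto $\mM_h$ is also well taken: the paper's use of this lemma in \eqref{eq:asymp:2} tacitly needs that step, which the paper only makes explicit in Corollary~\ref{coro:bound}.
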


        \begin{proof}
            It is straightforward to verify that $P_{H^1,h}$ is a well-defined continuous linear operator satisfying
            \begin{equation*}
                \norm{P_{H^1,h}v}_{H^1_h(\region)}\le \norm{v}_{H^1(\region)},\qquad \norm{v - P_{H^1,h}v}_{H^1_h(\region)} = \inf_{v_h\in V_h} \norm{v - v_h}_{H^1_h(\region)}. 
            \end{equation*}
            Estimating each term separately, we have
            \begin{align*}
                &\abs{\int_{\region} \left( \abs{\nabla u}^2 - \abs{\nabla_h P_{H^1,h}u}^2 \right) \diff r} \lesssim \norm{u}_{H^1(\region)}\norm{u - P_{H^1,h}u}_{H^1_h(\region)},\\
                &\abs{\int_{\region} V\left(\abs{u}^2 - \abs{P_{H^1,h}u}^2\right) \diff r} \lesssim \norm{u}_{H^1(\region)}\norm{u - P_{H^1,h}u}_{H^1_h(\region)},\\
                &\abs{\int_{\region} \beta\left(\abs{u}^4 - \abs{P_{H^1,h}u}^4\right) \diff r} \lesssim \norm{u}_{H^1(\region)}^3 \norm{u - P_{H^1,h}u}_{H^1_h(\region)}. 
            \end{align*}
            Consequently,
            \begin{equation*}
                \abs{E(u) - E(P_{H^1,h}u)} \lesssim \left( \norm{u}_{H^1_h(\region)} + \norm{u}_{H^1_h(\region)}^3 \right)\norm{u - P_{H^1,h}u}_{H^1_h(\region)}. 
            \end{equation*}
            Applying the approximation property~\eqref{eq:assumpt:1} from Assumption~\ref{assumpt:consist} and taking the limit $h \to 0$, we obtain
            \begin{equation*}
                \lim_{h\to 0}\abs{E(u) - E(P_{H^1,h}u)} = 0, 
            \end{equation*}
            The proof is complete.
        \end{proof}

        \begin{cor}\label{coro:bound}
            For a sufficiently small mesh size $h$, the discrete ground states $\{u_h\}$ are uniformly bounded in the $H^1_h$-norm.
        \end{cor}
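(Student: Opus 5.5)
The plan is to compare the discrete minimum energy with the energy of a fixed admissible competitor, and then use the structure of $E$ to control the broken $H^1$-seminorm.

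\textbf{Step 1: an $h$-independent upper bound for $E(u_h)$.} By the minimality of $u_h$ on $\mM_h$, for any admissible $v_h\in\mM_h$ we have $E(u_h)\le E(v_h)$. The natural competitor is built from $P_{H^1,h}u$ of Lemma~\ref{lem:proj}. It need not lie on $\mM_h$, so we normalize and set $v_h := P_{H^1,h}u/\norm{P_{H^1,h}u}_{L^2(\region)}$. The approximation property \eqref{eq:assumpt:1} together with $\norm{u-P_{H^1,h}u}_{H^1_h(\region)}=\inf_{w_h\in V_h}\norm{u-w_h}_{H^1_h(\region)}$ gives $\norm{u-P_{H^1,h}u}_{L^2(\region)}\to 0$. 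Hence $\norm{P_{H^1,h}u}_{L^2(\region)}\to\norm{u}_{L^2(\region)}=1$, and for $h$ small this norm is at least $1/2$, so the normalization is harmless. Lemma~\ref{lem:proj} gives $E(P_{H^1,h}u)\to E(u)$. Since $E(cw)$ is a polynomial in $c$ with coefficients bounded by $E(w)$, we get $E(v_h)\to E(u)$ as well. Consequently there is $h_0>0$ such that $E(u_h)\le E(v_h)\le E(u)+1$ for all $h\le h_0$.

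\textbf{Step 2: energy controls the broken norm.} We have $V\ge 0$ and $\beta\ge 0$, so every term in $E$ is nonnegative. Therefore
\begin{equation*}
\frac12\norm{\nabla_h u_h}_{L^2(\region)}^2\le E(u_h)\le E(u)+1 .
\end{equation*}
Using $\norm{u_h}_{L^2(\region)}=1$, this yields $\norm{u_h}_{H^1_h(\region)}^2\le 1+2E(u)+2$, a bound uniform in $h\le h_0$.

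\textbf{Main obstacle.} The only delicate point is that Lemma~\ref{lem:proj} does not itself produce an element of $\mM_h$. The normalization in Step 1, and the check that it preserves the energy convergence, is where care is needed. This requires the quartic term of $E(cw)$ to be controlled, which follows from Lemma~\ref{lem:itp} bounding $\norm{P_{H^1,h}u}_{L^4(\region)}$ by $\norm{u}_{H^1(\region)}$. Alternatively, we could avoid convergence altogether and just bound $E(v_h)\le 16\,E(P_{H^1,h}u)$ for $h$ small, using $\norm{P_{H^1,h}u}_{L^2(\region)}\ge 1/2$ and $\norm{P_{H^1,h}u}_{H^1_h(\region)}\le\norm{u}_{H^1(\region)}$. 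Together with Lemma~\ref{lem:itp}, this gives an explicit uniform bound.
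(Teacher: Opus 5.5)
Your proof is correct and follows the same route as the paper. Both normalize $P_{H^1,h}u$ to obtain a competitor in $\mM_h$, use $\norm{P_{H^1,h}u}_{L^2(\region)}\to 1$ and Lemma~\ref{lem:proj} to bound $E(u_h)$ uniformly, and conclude from the nonnegativity of every term in $E$ together with $\norm{u_h}_{L^2(\region)}=1$. Your Step~2 inequality $\frac12\norm{\nabla_h u_h}_{L^2(\region)}^2\le E(u_h)$ is in fact the correctly scaled form of the paper's looser claim $\abs{u_h}_{H^1_h(\region)}\le E(u_h)$.
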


        \begin{proof}
            Let $c_h = \norm{P_{H^1,h} u}_{L^2(\region)}$. It follows that $\lim\limits_{h\to 0} c_h = \norm{u}_{L^2(\region)} = 1$. Thus, for sufficiently small $h$, we have
            \begin{equation*}
                \abs{u_h}_{H^1_h(\region)} \le E(u_h) \le E\left(c_h^{-1}P_{H^1,h}u\right) \le \left(1-\abs{1-c_h}\right)^{-4}\left(E(u) + \abs{E(u)-E(P_{H^1,h} u)}\right) \le 2E(u). 
            \end{equation*}
            Combined with $\norm{u_h}_{L^2(\region)} = 1$, we conclude that
            \begin{equation*}
                \norm{u_h}_{H^1_h(\region)} \le \norm{u_h}_{L^2(\region)} + \abs{u_h}_{H^1_h(\region)} \le 1 + 2E(u). 
            \end{equation*}
            The proof is complete.
        \end{proof}

    \section{Verification of assumptions for the \texorpdfstring{$EQ_1^{\mathrm{rot}}$}{EQ1rot} element}\label{sec:eq1rot}

        In this section, we verify that the $EQ_1^{\mathrm{rot}}$ element satisfies Assumption~\ref{assumpt:consist}. The approximation property of the $EQ_1^{\mathrm{rot}}$ element is a direct consequence of \cite{li2008lower}*{Lemma 1}. We begin by proving the final condition~\eqref{eq:assumpt:4} specified in Assumption~\ref{assumpt:consist}.

        \begin{lemma}\label{lem:jump}
            For any $v_h\in V_h + H_0^1(\region)$ and $e\in \mF_h$, the following estimate holds:
            \begin{equation*}
                \left( \sum_{e\in \mF_h} h^{-1}\norm{[v_h]}_{L^2(e)}^2 \right)^{\frac{1}{2}} \lesssim \norm{v_h}_{H^1_h(\region)}, 
            \end{equation*}
        \end{lemma}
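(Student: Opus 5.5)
Write $v_h = v_1 + v_2$ with $v_1\in V_h$ and $v_2\in H_0^1(\region)$. Since $v_2$ has no jumps across interior faces and vanishes on $\partial\region$, we have $[v_h]=[v_1]$ on every $e\in\mF_h$. The key structural fact about $EQ_1^{\mathrm{rot}}$ is the mean-value continuity $\int_e [v_1]\diff S = 0$ for every face (interior or boundary), and this is inherited by $v_h$: $\int_e [v_h]\diff S=0$. The plan is therefore to work directly with $v_h$ as a piecewise $H^1$ function whose jumps have zero face mean, and to prove a local estimate
\begin{equation*}
    \norm{[v_h]}_{L^2(e)}^2 \lesssim h\sum_{K\supseteq e}\norm{\nabla v_h}_{L^2(K)}^2
\end{equation*}
for each face $e$. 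Summing over faces then gives the claim with $\abs{v_h}_{H^1_h(\region)}\le\norm{v_h}_{H^1_h(\region)}$ on the right, since each element has a bounded number of faces (mesh regularity).

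For the local estimate on an interior face $e=K_1\cap K_2$, write $[v_h] = (v_h|_{K_1}-c_e^1) - (v_h|_{K_2}-c_e^2)$, where $c_e^i = \abs{e}^{-1}\int_e v_h|_{K_i}\diff S$. This is legitimate because $c_e^1=c_e^2$ by the zero-mean jump property. It then suffices to bound $\norm{w-\bar w_e}_{L^2(e)}$ for $w\in H^1(K)$, where $\bar w_e$ denotes the face mean. Two facts do this. First, the scaled trace inequality $\norm{w}_{L^2(e)}^2\lesssim h^{-1}\norm{w}_{L^2(K)}^2 + h\norm{\nabla w}_{L^2(K)}^2$. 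Second, using that subtracting the face mean minimizes over constants, we get $\norm{w-\bar w_e}_{L^2(e)}\le\norm{w-\Pi_0 w}_{L^2(e)}$, with $\Pi_0$ the element average. Applying the trace inequality to $w-\Pi_0 w$ together with the Poincaré inequality $\norm{w-\Pi_0 w}_{L^2(K)}\lesssim h\norm{\nabla w}_{L^2(K)}$ (as in \eqref{eq:itp0}) yields $\norm{w-\bar w_e}_{L^2(e)}^2\lesssim h\norm{\nabla w}_{L^2(K)}^2$. Boundary faces are handled identically with a single element, since there $[v_h]=v_h|_K$ has zero mean on $e$.

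Both inequalities are proved on the reference cube $\hat K=[-1,1]^d$ and transferred by the affine map of the rectangular/cubic element. The scaling gives the factors $h^{\pm1}$, with constants depending only on the aspect ratio, which is controlled by mesh regularity. The main point requiring care is the first reduction: we must check that the zero-mean jump condition survives adding an $H_0^1$ function, including the boundary faces. A generic $v_h\in V_h+H_0^1(\region)$ is not piecewise polynomial, which is why the argument uses only trace and Poincaré inequalities in $H^1(K)$ and no inverse estimates. Once that reduction is in place, the remaining steps are routine scaling.
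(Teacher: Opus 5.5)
Your proof is correct and follows essentially the same route as the paper. You use $\int_e [v_h]\diff S = 0$ (inherited from the $V_h$ part, since the $H_0^1(\region)$ part has no jumps and zero boundary trace) to subtract a common face mean. You then apply the scaled face-mean trace estimate $\norm{w-\bar w_e}_{L^2(e)}\lesssim h^{1/2}\abs{w}_{H^1(K)}$ on each adjacent element and sum over faces.

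There are only two minor differences. First, you make explicit why the two one-sided face means coincide, which the paper uses tacitly when it writes a single $\overline{v_h}$. Second, you derive the trace estimate from the scaled trace inequality, the element Poincaré inequality and the minimality of the face mean over constants, whereas the paper cites it directly on the reference element.
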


        \begin{proof}
            Consider a reference element $\hat{K}$ and a face $\hat{e}\subseteq \partial \hat{K}$. For any $\hat{v}\in H^1(\hat{K})$, let $\overline{\hat{v}}$ denote the integral average of $\hat{v}$ over the face $\hat{e}$, defined as
            \begin{equation*}
                \overline{\hat{v}} = \frac{1}{\abs{\hat{e}}}\int_{\hat{e}} \hat{v} \diff S. 
            \end{equation*}
            By the Poincar\'e inequality for boundary traces with zero mean~\cite{repin2019poincare}, we have 
            \begin{equation*}
                \norm{\hat{v} - \overline{\hat{v}}}_{L^2(\hat{e})} \lesssim \abs{\hat{v}}_{H^1(\hat{K})}. 
            \end{equation*}
            Mapping the reference element back to the physical element, we consider two cases for any $e\in \mF_h$. If $e = K_1\cap K_2$ is an interior interface, let $K_e$ denote the union of the two elements $K_1$ and $K_2$. It follows that
            \begin{equation*}
                \norm{[v_h]}_{L^2(e)} \le \norm{v_h|_{K_1} - \overline{v_h}}_{L^2(e)} + \norm{v_h|_{K_2} - \overline{v_h}}_{L^2(e)} \lesssim h^{\frac{1}{2}}\left( \abs{v_h}_{H^1(K_1)} + \abs{v_h}_{H^1(K_2)} \right) = h^{\frac{1}{2}}\abs{v_h}_{H^1_h(K_e)}. 
            \end{equation*}
            If $e\subseteq \partial \region$ is a boundary face, let $K_e = K$. Then
            \begin{equation}\label{eq:consist:7}
                \norm{[v_h]}_{L^2(e)} = \norm{v_h}_{L^2(e)} \lesssim h^{\frac{1}{2}}\abs{v_h}_{H^1(K)} = h^{\frac{1}{2}}\abs{v_h}_{H^1_h(K_e)}. 
            \end{equation}
            Summing over all $e\in \mF_h$, we obtain
            \begin{equation*}
                \left( \sum_{e\in \mF_h} h^{-1}\norm{[v_h]}_{L^2(e)}^2 \right)^{\frac{1}{2}} \lesssim \left( \sum_{e\in \mF_h}\abs{v_h}_{H^1_h(K_e)}^2 \right)^{\frac{1}{2}} \lesssim \norm{v_h}_{H^1_h(\region)}. 
            \end{equation*}
            This completes the proof. 
        \end{proof}

        Next, we examine the generalized patch test for the $EQ_1^{\mathrm{rot}}$ element. Estimates for the consistency terms are mentioned in \cite{rannacher1992simple}. Here, we provide a proof for the sake of completeness. 

        \begin{lemma}
            The $EQ_1^{\mathrm{rot}}$ element passes the generalized patch test. In general, for any $v_h\in V_h + H_0^1(\region)$ and $v\in H^1(\region; \R^d)$, it holds that
            \begin{equation*}
                \abs{\mE_h(v_h, v)} \lesssim h\norm{v}_{H^1(\region)}\norm{v_h}_{H^1_h(\region)}. 
            \end{equation*}
        \end{lemma}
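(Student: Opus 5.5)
The plan is to rewrite $\mE_h(v_h, v)$ as a sum of face integrals of jumps, subtract face averages on both factors, and then use the zero-mean face moments of the jumps together with trace and Poincaré estimates to gain the factor $h$. Summing the element boundary integrals face by face, we first write
\begin{equation*}
    \mE_h(v_h, v) = \sum_{K\in\mT_h}\int_{\partial K} v_h (v\cdot n)\diff S = \sum_{e\in\mF_h}\int_e [v_h]\,(v\cdot n_e)\diff S .
\end{equation*}
Here $n_e$ is a fixed unit normal on $e$. On interior faces the trace of $v\in H^1(\region;\R^d)$ is single valued, so only the jump of $v_h$ survives. On boundary faces the jump is $v_h|_K$ by definition. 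Decompose $v_h = v_1 + v_2$ with $v_1\in V_h$ and $v_2\in H_0^1(\region)$. Then $[v_h] = [v_1]$ on every face, and by the definition of the $EQ_1^{\mathrm{rot}}$ space $\int_e [v_h]\diff S = 0$ for every $e\in\mF_h$, including boundary faces.

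Because the jump has zero mean on each face, we may subtract the face average $\overline{v\cdot n_e}$ of $v\cdot n_e$ over $e$:
\begin{equation*}
    \abs{\mE_h(v_h,v)} = \abs{\sum_{e\in\mF_h}\int_e [v_h]\left(v\cdot n_e - \overline{v\cdot n_e}\right)\diff S} \le \sum_{e\in\mF_h}\norm{[v_h]}_{L^2(e)}\norm{v\cdot n_e - \overline{v\cdot n_e}}_{L^2(e)} .
\end{equation*}
For the second factor, choose an element $K$ with $e\subseteq\partial K$. The face average minimizes the $L^2(e)$ distance to constants, so replacing it by any other constant only increases the norm; this yields the first inequality below. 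Combining that with the scaled trace inequality $\norm{w}_{L^2(e)}\lesssim h^{-1/2}\norm{w}_{L^2(K)} + h^{1/2}\abs{w}_{H^1(K)}$ and the Poincaré inequality on $K$ gives
\begin{equation*}
    \norm{v\cdot n_e - \overline{v\cdot n_e}}_{L^2(e)} \le \norm{v\cdot n_e - \Pi_0(v\cdot n_e)|_K}_{L^2(e)} \lesssim h^{1/2}\abs{v}_{H^1(K)} .
\end{equation*}
This is the same face-Poincaré argument used in Lemma~\ref{lem:jump}, now applied on the reference element. For the first factor, Lemma~\ref{lem:jump} gives $\norm{[v_h]}_{L^2(e)}\lesssim h^{1/2}\abs{v_h}_{H^1_h(K_e)}$.

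Multiplying the two factors, each face contributes $\lesssim h\,\abs{v}_{H^1(K)}\,\abs{v_h}_{H^1_h(K_e)}$. We then apply the Cauchy--Schwarz inequality over faces. Each element belongs to at most $2d$ faces (bounded overlap under mesh regularity), so the sum is bounded by
\begin{equation*}
    h\,\norm{v}_{H^1(\region)}\norm{v_h}_{H^1_h(\region)} ,
\end{equation*}
which is the stated estimate. Taking the supremum over $\norm{v_h}_{H^1_h(\region)}\le 1$ and letting $h\to 0$ then yields the generalized patch test \eqref{eq:assumpt:3}.

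The main obstacle is the bookkeeping of the decomposition $v_h\in V_h + H_0^1(\region)$ on boundary faces. There we need $\int_e v_h\diff S = 0$, which holds because the $H_0^1$ part has zero trace and the $V_h$ part has zero face mean by the definition of the space. We also need the face-mean subtraction to be legitimate face by face, which relies on the zero-mean property holding for the jump rather than for each one-sided trace. Both points are direct from the definitions; everything else is routine scaling.
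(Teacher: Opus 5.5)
Your proposal is correct and follows essentially the same route as the paper. Both arguments rewrite $\mE_h$ as a sum of face integrals of $[v_h]$, use the zero face mean of the jump to subtract $\overline{v\cdot n_e}$, bound each factor by $h^{1/2}$ times local seminorms (the jump factor via Lemma~\ref{lem:jump}), and finish with Cauchy--Schwarz and bounded overlap. Your splitting $v_h=v_1+v_2$ justifies $\int_e [v_h]\diff S=0$ for $v_h\in V_h+H_0^1(\region)$ more explicitly than the paper does, and you control the oscillation of $v\cdot n_e$ on a single adjacent element rather than on the patch $K_e$, which changes nothing.
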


        \begin{proof}
            By the definition of the $EQ_1^{\mathrm{rot}}$ element, we have
            \begin{equation}\label{eq:consist:4}
                \begin{aligned}
                    \mE_h(v_h, v) &= \sum_{K\in \mT_h} \int_{\partial K} v_h (v\cdot n) \diff S = \sum_{e\in \mF_h} \int_e [v_h] (v\cdot n) \diff S = \sum_{e\in \mF_h} \int_e [v_h] (v\cdot n - \overline{v\cdot n}) \diff S \\
                    &\le \sum_{e\in \mF_h} \norm{[v_h]}_{L^2(e)}\norm{v\cdot n - \overline{v\cdot n}}_{L^2(e)}, 
                \end{aligned}
            \end{equation}
            where $\overline{v\cdot n}$ denotes the average of $v\cdot n$ over the face $e$. Similar to the proof of Lemma~\ref{lem:jump}, we have
            \begin{equation}\label{eq:consist:6}
                \norm{v\cdot n - \overline{v\cdot n}}_{L^2(e)} \lesssim h^{\frac{1}{2}} \abs{v\cdot n}_{H^1(K_e)} \lesssim h^{\frac{1}{2}} \abs{v}_{H^1(K_e)}. 
            \end{equation}
            Substituting \eqref{eq:consist:6} and the estimate \eqref{eq:consist:7} from Lemma~\ref{lem:jump} into \eqref{eq:consist:4} yields
            \begin{equation*}
                \begin{aligned}
                    \abs{\mE_h(v_h, v)} &\le \sum_{e\in \mF_h} \norm{[v_h]}_{L^2(e)}\norm{v\cdot n - \overline{v\cdot n}}_{L^2(e)} \lesssim h \sum_{e\in \mF_h} \abs{v_h}_{H^1_h(K_e)}\abs{v}_{H^1(K_e)} \\
                    &\le h \left( \sum_{e\in \mF_h} \abs{v_h}_{H^1_h(K_e)}^2 \right)^{\frac{1}{2}} \left( \sum_{e\in \mF_h} \abs{v}_{H^1(K_e)}^2 \right)^{\frac{1}{2}} \lesssim h \norm{v_h}_{H^1_h(\region)} \norm{v}_{H^1(\region)}. 
                \end{aligned}
            \end{equation*}
            The proof is thus complete. 
        \end{proof}

\end{document}